\numberwithin{equation}{subsection}
\newtheorem{thm}{Theorem}[subsection]
\newtheorem*{thm*}{Theorem}
\newtheorem{lem}[thm]{Lemma}
\newtheorem{prop}[thm]{Proposition}
\newtheorem*{prop*}{Proposition}
\newtheorem{cor}[thm]{Corollary}
\theoremstyle{definition}
\newtheorem{defn}[thm]{Definition}
\newtheorem{notation}[thm]{Notation}
\newtheorem{convention}[thm]{Convention}
\newtheorem{remark}[thm]{Remark}
\DeclareFontFamily{U}{mathx}{}
\DeclareFontShape{U}{mathx}{m}{n}{<-> mathx10}{}
\DeclareSymbolFont{mathx}{U}{mathx}{m}{n}
\DeclareMathAccent{\widehat}{0}{mathx}{"70}
\DeclareMathAccent{\widecheck}{0}{mathx}{"71}
\def\eqdef{\stackrel{\text{def}}{=}}
\def\cal{\mathcal}
\def\frak{\mathfrak}
\def\bf{\mathbf}
\DeclareMathOperator{\id}{id}
\DeclareMathOperator{\ad}{ad}
\DeclareMathOperator{\Res}{Res}
\DeclareMathOperator{\End}{End}
\DeclareMathOperator{\Span}{span}
\DeclareMathOperator{\Spec}{Spec}
\DeclareMathOperator{\Conf}{Conf}
\DeclareMathOperator{\Rav}{Rav}
\newcommand{\normord}[1]{%
  :\mathrel{\mspace{2mu}#1\mspace{2mu}}:%
}
\def\R{\mathbb R}
\def\Z{\mathbb Z}
\def\Q{\mathbb Q}
\def\C{\mathbb C}
\def\N{\mathbb N}
\def\A{\mathbb A}
\begin{document}

%%%%%%%%%%%%%%%%%%%%%%%%%%%%%%%%%%%%%%%%%%%%%%
\title{Cohomological Vertex Algebras}

\author[Griffin]{Colton Griffin}

\address{David Rittenhouse Lab, University of Pennsylvania, 209 South 33rd Street, Philadelphia, PA 19104-6395}

%%%%%%%%%%%%%%%%%%%%%%%%%%%%%%%%%%%%%%%%%%%%%%

\setcounter{tocdepth}{1}

\begin{abstract}
    Vertex algebras (and their modules) can be described as vector spaces together with a linear operator-valued series in one parameter $z$. With the interpretation of $z$ as a coordinate at a point on a curve, one can construct algebraic structures on the moduli space of curves from $V$-modules. Here we propose a generalization of vertex algebras involving linear operators in parameters $z_1,\ldots,z_n$. One may interpret these as being the components of a set of coordinates on an $n$-dimensional algebraic variety. These are referred to as cohomological vertex algebras (CVAs): the formal punctured 1-disk underlying a vertex algebra is replaced by a ring modeling the cohomology of certain modifications of the formal $n$-disk. We prove several structural theorems for CVAs and give a definition of cohomological vertex operator algebras (CVOAs). Using a reconstruction theorem for CVAs, we provide basic examples such as the $\beta\gamma$-system, the Heisenberg CVA, and the affine Kac-Moody CVAs. We use these constructions to describe BRST reduction, leading to an analog of W-algebras.
\end{abstract}

\maketitle

% \tableofcontents

\section{Introduction}
Vertex algebras (VAs) are fundamental algebraic structures that arise naturally in a broad context. Among their many applications, VAs provide a framework for constructing algebraic structures on moduli spaces: Given a pointed algebraic curve, we may associate to the distinguished point a VA whose formal parameter $z$ is a choice of local coordinate on the curve. If the VA is a vertex operator algebra (VOA), we may vary this choice of formal parameter and the point on the curve, giving rise to sheaves of coinvariants on moduli spaces of stable curves
\cite{frenkel2004vertex}, \cite{DGT21}, \cite{DGT-factorization}. 
This connection between VOAs and moduli spaces motivates a natural question: \textit{do these notions extend to higher dimensions?} More specifically, are there ``higher-dimensional'' analogs of VAs that are suitable for the application of building analogous structures on moduli spaces of higher-dimensional varieties? One may also hope that such a generalization will also be useful in the study of higher-dimensional field theories in physics.

Generalizing VAs is a subtle problem, and there are several perspectives one could take. For example, it is known that VAs are equivalent to translation-equivariant chiral algebras on the affine line $\A^1$ \cite{BD2004chiral}, so one could define an \textit{$n$-dimensional VA} to be a translation-equivariant chiral algebra on $\A^n$ in the sense of Francis \& Gaitsgory \cite{FrancisGaitsgory}. With chiral algebras, the use of formal calculus in VAs is replaced with $\cal D$-modules and other tools in derived algebraic geometry. This approach to higher-dimensional VAs is taken in the study of the unit chiral algebra on $\A^n$ in \cite{YoungChiral}.  However, this definition can be difficult to work with due to the fact that translation-equivariant chiral algebras on $\A^n$ for $n\in \Z_{\geq 2}$ define an $\infty$-category rather than a usual 1-category, making it less straightforward to translate the usual algebraic VA axioms to higher dimensions.

In this paper, we introduce \textit{cohomological vertex algebras} (CVAs), which are based on a simple idea. Recall that classical VAs can be understood through the ring of Laurent series $\C(\!(z)\!)$, which is naturally identified with the global sections $H^0(D_1^\circ,\cal O)$ of the formal punctured disk $D_1^\circ = \Spec\C(\!(z)\!)$. To generalize to higher dimensions, we replace $D_1^\circ$ with any of the following schemes $D$:
\begin{enumerate}
    \item The formal punctured $n$-disk $D = D_n^\circ = \Spec\C[\![z_1,\ldots,z_n]\!]\setminus\{(z_1,\ldots,z_n)\}$ for $n\in \Z_{\geq 2}$.
    \item $D = D_n^{(p)}$, the formal $n$-disk with a $p$-multiplied origin for $p\in \Z_{\geq 2}$ and $n\in \Z_{\geq 1}$. Here $D_n^{(p)}$ is given as the colimit of the diagram whose objects are $D_n^\circ$ and $p$ copies of $D_n$, denoted $U_i\cong D_n$ for all $i=1,\ldots,p$, and the morphisms are the canonical maps $D_n^\circ \to U_i$. When $n=1$ and $p=2$, then we get the formal raviolo $\Rav = D_1^{(2)} = D_1\sqcup_{D_1^\circ}D_1$.
\end{enumerate}
For either of the above choices for $D$, the \v{C}ech cohomology ring $H^\bullet(D,\cal O)$ is concentrated in two degrees. In degree 0, we obtain the usual ring of regular functions, and in some higher degree $N$ we obtain a collection of purely singular functions:
\[H^\bullet(D,\cal O) = \begin{dcases}
\C[\![z_1,\ldots,z_n]\!],& \bullet = 0\\
z_1^{-1}\ldots z_n^{-1}\C[z_1^{-1},\ldots ,z_n^{-1}],& \bullet = N
\end{dcases}\]
Here $N=n-1$ for $D = D_n^\circ$ and $N = n+p-2$ for $D = D_n^{(p)}$.
We give a natural set of replacement axioms for VAs in this setting, referring to the resulting algebraic objects as \textit{cohomological vertex algebras} (CVAs).

The second option $D = D_n^{(p)}$ is closely related to the raviolo VAs of \cite{ravioli}, in which Garner \& Williams develop a VA-like structure with $\C(\!(z)\!)$ replaced by the $\C$-algebra $H^\bullet(\Rav,\cal O)$, where $\Rav \eqdef D_1^{(2)} = D_1\sqcup_{D_1^\circ}D_1$ is the formal raviolo (or 1-disk with a doubled origin). We adopt much of the paper's notation, and we adapt and extend many of their results.

\subsection{Our results}
Cohomological vertex algebras satisfy several properties in common with vertex algebras. For example, we show there are natural analogs of locality/weak commutativity, a weak version of the Dong-Li lemma (Lemma \ref{lem:Dongpoly}), skew-symmetry (Proposition \ref{prop:skewsymmetry}), Goddard uniqueness (Proposition \ref{prop:Goddard}), weak associativity (Proposition \ref{prop:weakassoc}), and the Jacobi identity (Theorem \ref{thm:Jacobi}), a foundational identity in the theory of vertex algebras. We also examine some equivalent axioms for CVAs. In particular, we show that the axioms of a CVA may be reformulated in terms of generalized correlation functions (Theorem \ref{thm:CVAcorrfuns}). This is related to the correspondence between chiral algebras and factorization algebras \cite{BD2004chiral}.

The most widely studied VAs are vertex operator algebras over $\C$ (VOAs), which have a compatible action by the Virasoro algebra. For CVAs, we similarly define a Lie algebra $CW_n$ similar to the Witt algebra, and it admits a central extension of dimension 1 for $n=1$ and is centrally closed when $n\geq 2$. This allows us to give the definition of a cohomological vertex operator algebra (CVOA).

Finally, using an analog of the reconstruction theorem for vertex algebras (Theorem \ref{thm:reconstruction}), we provide some examples of cohomological vertex algebras, which are natural analogs of the Virasoro, $\beta\gamma$-free field, affine Kac-Moody, and Heisenberg vertex algebras. We finish the section with a discussion on BRST reduction and W-algebras. We expect that these constructions are related to certain higher-dimensional chiral algebras on $\A^n$, generalizing the corresponding construction for vertex operator algebras.

We refer the reader to an extended version of this paper as a preprint \cite{CVA} for an extended discussion of local fields, normal ordered products, and the Lie algebra $CW_n$.

There are many other important general constructions in the theory of VAs, such as modules, vertex Lie algebras, Zhu algebras, coinvariants/conformal blocks, and intertwining operators. Indeed all of these constructions may be performed and will be explored in-depth in a follow-up paper. We hope that there is some kind of modular invariance result for CVAs, just as there is for VAs.

We conjecture that the homotopy category of translation-equivariant chiral algebras on $\A^n$ is equivalent to the category of CVAs for $D = D_n^\circ$. We will not pursue this conjecture here.
Part of the reason we are working with CVAs instead of chiral algebras is that we do not have a good description of what cohomological vertex algebras for $D = D_n^{(p)}$ should correspond to in terms of chiral-algebraic structures. We expect that there exists a corresponding chiral-algebraic analog depending on $p$ and $n$, but it is unclear what modifications must be made to the definition from Francis \& Gaitsgory. See \cite{Tam02} for an example of such objects on curves.

\subsection{Plan of the paper}
In Section \ref{Sec2} we give an overview of the relevant formal calculus for the definition of CVAs. 

In Section \ref{Sec3}, we define CVAs and state their basic properties. We provide analogs of famous properties of classical vertex algebras such as weak commutativity, weak associativity, skew-symmetry, the iterate formula, and the Jacobi identity. We also discuss various equivalent sets of axioms, showing for example that some of our axioms may be equivalently replaced by the Jacobi identity or skew-symmetry together with the iterate formula (as is done in Borcherds' definition). We also discuss how to identify the data of a vertex algebra using correlation functions.

In Section \ref{Sec5}, we provide examples of cohomological vertex algebras for all dimensions $n\in \Z_{\geq 1}$, with our primary result being a cohomological analog of the reconstruction theorem. We provide analogous constructions of the Virasoro, $\beta\gamma$-free field, affine Kac-Moody\footnote{The primary inspiration for our work is \cite{KapranovKacMoody}, in which Faonte, Hennion, \& Kapranov examine a derived Kac-Moody algebra $\frak g^\bullet_n \eqdef \frak g\otimes_\C R\Gamma(D_n^\circ,\cal O)$ for a reductive Lie algebra $\frak g$. For $n=1$ we obtain the usual current algebra $\frak g(\!(z)\!)$, whose universal central extension gives rise to the famous affine vertex operator algebras. In Section 3.2 they describe a central extension of the dg Lie algebra $\frak g^\bullet_n$ as an $L_\infty$ algebra. This motivated us to analyze the cohomology $H^\bullet(\frak g^\bullet_n)$ and determine if a natural VA-like structure exists in the way it does for $n=1$. This motivated the definition of the affine CVA.}, and Heisenberg vertex algebras. We end the paper with a discussion of BRST reduction and W-algebras, following \cite{ravioli}.

\subsection*{Acknowledgements}
We thank Angela Gibney, Daniel Krashen, James Lepowsky, Yi-Zhi Huang, Niklas Garner, and Brian Williams for the helpful conversations that led to the development of this paper.

\section{Preliminaries on formal calculus and local systems of fields}\label{Sec2}
We start with a discussion of formal calculus in Section \ref{subsection:formalcalc} and Section \ref{subsec:taylor}, moving on to the main definition of CVAs in Section \ref{subsec:CVA}. We start with some conventions.
Throughout this section, we will fix integers $N,n\in \Z$ such that $n\in \Z_{\geq 1}$. By convention, all rings are commutative.
Given $n\in \Z_{\geq 1}$, we denote $\bf z = (z_1,\ldots,z_n)$ for an ordered tuple of formal variables. We denote an arbitrary element of $\N^n$ using bold notation, that is $\bf k = (k_1,\ldots,k_n)$. For $\bf k,\bf j\in \N^n$, we set
\[\bf {z^k} \eqdef \prod_{i=1}^n z_i^{k_i},\qquad \bf k! \eqdef \prod_{i=1}^nk_i!,\qquad \binom{\bf k}{\bf j} \eqdef \prod_{i=1}^n\binom{k_i}{j_i}.\]
We write $\bf k\geq \bf j$ to mean $k_i\geq j_i$ for all $i=1,\ldots,n$. We also write $\bf n = (n,\ldots,n)$ for $n\in \Z$.

\subsection{Formal Calculus}\label{subsection:formalcalc}
Here, we develop the basics of the formal calculus that is necessary for the rest of the work done in this paper. To start, we give some conventions for $\Z$-graded $R$-modules, where $R$ is a commutative ring.
\begin{convention}
    Let $V = \bigoplus_{r\in \Z}V^r$ be a $\Z$-graded $R$-module. Given a homogeneous element $a\in V$, we denote its degree as $p(a)$. Given two homogeneous elements $a,b\in V$, we write
    \[p(a,b) = (-1)^{p(a)p(b)}.\]
    Later on, we will use $|k|$ to mean the sum of elements of a multi-index $k\in \N^n$, which is why we do not use this notation here.
\end{convention}
Fix a ring $R$, a $\Z$-graded $R$-module $V = \bigoplus_{r\in \Z}V^r$, and integers $n,N$ such that $n\geq 1$. We define $\cal K_{\mathrm{dist}}^{\bf z}(V)$ to be the graded $R$-module of series of the form
\begin{equation}\label{eq:f(z)}
    A(\bf z) = \sum_{\bf k\in \N^n}\bf {z^k}A_{-\bf 1-\bf k} + \Omega_{\bf z}^{\bf k}A_{\bf k},
\end{equation}
where $A_{-\bf 1-\bf k},A_{\bf k}\in V$, the variables $z_i$ are degree 0, and the symbols $\Omega_{\bf z}^{\bf k}$ are all degree $N$. The $R$-action is given by the $R$-action on $V$. Here the forms $\Omega_{\bf z}^{\bf k}$ correspond to the elements $\bf z^{-\bf 1-\bf k}$ in $H^N(D,\cal O)$, which was described in the introduction.
Here $n$ and $N$ are not explicitly stated when writing $\cal K^{\bf z}_{\mathrm{dist}}(V)$, and they should be assumed to be arbitrary unless stated otherwise.

We further define two submodules:
\begin{enumerate}
    \item $\cal K^{\bf z}(V)\subset \cal K^{\bf z}_{\mathrm{dist}}(V)$ is the $R$-submodule of series $A(\bf z)$ such that there exists $\bf K\in \N^n$ such that for $\bf k\in \N^n$ we have $A_{\bf k} = 0$ unless $\bf k\leq \bf K$.
    \item $\cal K^{\bf z}_{\mathrm{poly}}(V)\subset \cal K^{\bf z}(V)$ is the $R$-submodule of series $A(\bf z)$ such that there exists $\bf K\in \N^n$ such that $A_{-\bf 1-\bf k} = 0$ and $A_{\bf k} = 0$ unless $\bf k\leq \bf K$.
\end{enumerate}
To abbreviate the above constructions, we write
\begin{align}
    \cal K_{\mathrm{dist}}^{\bf z}(V) &\eqdef V[\![\bf z]\!]\oplus V[\![\Omega_{\bf z}^{\bf k}]\!]_{\bf k\in \N^n},\\
    \cal K^{\bf z}(V) &\eqdef V[\![\bf z]\!]\oplus V[\Omega_{\bf z}^{\bf k}]_{\bf k\in \N^n},\\
    \cal K_{\mathrm{poly}}^{\bf z}(V) &\eqdef V[\bf z]\oplus V[\Omega_{\bf z}^{\bf k}]_{\bf k\in \N^n},
\end{align}

The $R$-modules
$\left(\cal K^{\bf z}_{\mathrm{poly}}(V), \cal K^{\bf z}(V),\cal K^{\bf z}_{\mathrm{dist}}(V)\right)$ are analogs of $\left(V[z^{\pm 1}],V(\!(z)\!),V[\![z^{\pm 1}]\!]\right)$, respectively. We call these the \textit{spaces of cohomological Laurent polynomials, Laurent series}, and \textit{formal distributions} respectively.
To make the analogy with Laurent series appropriate, sometimes we write $V\langle\!\langle \bf z\rangle\!\rangle\eqdef\cal K^{\bf z}(V)$.

Now, let $U$ be a $\Z$-graded commutative $R$-algebra. We impose a product structure on $\cal K^{\bf z}(U)$ as follows. For all $\bf j,\bf k\geq \bf 0$ we set
\begin{equation}
\bf z^{\bf j}\Omega_{\bf z}^{\bf k} = \Omega_{\bf z}^{\bf k}z^j = \begin{dcases}
    \Omega_{\bf z}^{\bf k-\bf j},& j\leq k\\
    0,& \text{else},
\end{dcases}\qquad \bf z^{\bf j}\bf z^{\bf k} = \bf z^{\bf j+\bf k} = \bf z^{\bf k}\bf z^{\bf j},\qquad \Omega_{\bf z}^{\bf j}\Omega_{\bf z}^{\bf k} = 0.
\end{equation}
We require that elements of $U$ graded-commute with the forms $\Omega^{\bf  k}_{\bf z}$. That is, for $a\in V^r$ we require that $a\Omega^{\bf k}_{\bf z} = (-1)^{rN}\Omega^{\bf k}_{\bf z}a$, where again $N$ is the degree of $\Omega^{\bf k}_{\bf z}$ for all $\bf k\geq 0$.
The $R$-submodule $\cal K^{\bf z}_{\mathrm{poly}}(U)$ is naturally a subalgebra of $\cal K^{\bf z}(U)$.
\begin{remark}
    The first and last products are what distinguish $\cal K^{\bf z}(R)$ from the ring of Laurent series $R(\!(z)\!)$, aside from the $\Z$-grading and that $n$ can be greater than 1.
\end{remark}
The modules $\cal K^{\bf z}(V),\cal K^{\bf z}_{\mathrm{poly}}(V)$ also come with natural degree 0 endomorphisms $\partial_{\bf z}^{(\bf j)}$ for all $\bf j\geq \bf 0$, which we call the \textit{Hasse derivatives}. They are defined as
\begin{equation}
    \partial_{\bf z}^{(\bf j)}\bf z^{\bf k} \eqdef \binom{\bf k}{\bf j}\bf z^{\bf k-\bf j},\qquad \partial_{\bf z}^{(\bf j)}\Omega_{\bf z}^{\bf k} \eqdef (-1)^{|\bf j|}\binom{\bf k+\bf j}{\bf j}\Omega_{\bf z}^{\bf k+\bf j}.
\end{equation}
As a basic convention, we denote the derivative of a product of series as $\partial^{(\bf k)}_{\bf z}[A(\bf z)B(\bf z)]$; this is not to be confused with the product $\partial^{(\bf k)}_{\bf z}A(\bf z)B(\bf z) = (\partial^{(\bf k)}_{\bf z}A(\bf z))B(\bf z)$.
\begin{lem}
    For all $\bf j,\bf k\geq \bf 0$ we have $\partial^{(\bf j)}_{\bf z}\partial^{(\bf k)}_{\bf z} = \binom{\bf j+\bf k}{\bf j}\partial^{(\bf j+\bf k)}_{\bf z}$. When $R$ is a $\Q$-algebra, then 
    \[\partial_{\bf z}^{(\bf k)} = \frac{1}{\bf k!}\partial^{\bf k}_{\bf z}\eqdef \prod_{i=1}^n\frac{1}{k_i!}\partial_{z_i}^{k_i}.\]
\end{lem}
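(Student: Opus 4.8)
The statement has two parts: the composition law $\partial^{(\mathbf j)}_{\mathbf z}\partial^{(\mathbf k)}_{\mathbf z} = \binom{\mathbf j+\mathbf k}{\mathbf j}\partial^{(\mathbf j+\mathbf k)}_{\mathbf z}$, and the identification with normalized iterated partials over a $\mathbb Q$-algebra. Both parts reduce to the one-variable case by the product structure of the multi-indices, and the one-variable case reduces to checking the identities on the spanning monomials $z^k$ and $\Omega_z^k$.

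\medskip

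\textbf{Proof proposal.} The plan is to verify both identities on the two families of basis elements $\mathbf z^{\mathbf k}$ and $\Omega_{\mathbf z}^{\mathbf k}$, since every element of $\mathcal K^{\mathbf z}(V)$ or $\mathcal K^{\mathbf z}_{\mathrm{poly}}(V)$ is a (possibly infinite, but coefficientwise-finite) $V$-linear combination of these, and the Hasse derivatives are defined coefficientwise, hence act continuously with respect to the relevant topology. First I would observe that all the operators and combinatorial factors involved factor as products over the $n$ coordinates: $\partial^{(\mathbf j)}_{\mathbf z} = \prod_{i=1}^n \partial^{(j_i)}_{z_i}$ in the obvious sense, $\binom{\mathbf j+\mathbf k}{\mathbf j} = \prod_i \binom{j_i+k_i}{j_i}$, and $\mathbf z^{\mathbf k} = \prod_i z_i^{k_i}$, $\Omega_{\mathbf z}^{\mathbf k}$ behaving analogously. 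Therefore it suffices to prove everything for $n=1$; the general case follows by taking the product over $i$ of the one-variable identities. The only subtlety is the sign $(-1)^{|\mathbf j|}$ in the action on $\Omega$-terms, but $|\mathbf j| = \sum_i j_i$ so $(-1)^{|\mathbf j|} = \prod_i (-1)^{j_i}$ also factors, and these signs are scalars that commute past everything, so the factorization goes through cleanly.

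\medskip

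For the one-variable composition law on $z^k$: we compute $\partial^{(j)}_z\partial^{(k)}_z z^m = \partial^{(j)}_z \binom{m}{k} z^{m-k} = \binom{m}{k}\binom{m-k}{j} z^{m-k-j}$, and the classical binomial identity $\binom{m}{k}\binom{m-k}{j} = \binom{j+k}{j}\binom{m}{j+k}$ (both sides count ways to choose an ordered pair of disjoint subsets of sizes $k$ and $j$ from $m$ elements) gives $\binom{j+k}{j}\partial^{(j+k)}_z z^m$. On $\Omega_z^m$: $\partial^{(j)}_z\partial^{(k)}_z \Omega_z^m = (-1)^j(-1)^k \binom{m+k}{k}\binom{m+k+j}{j}\Omega_z^{m+j+k}$, and again the identity $\binom{m+k}{k}\binom{m+j+k}{j} = \binom{j+k}{j}\binom{m+j+k}{j+k}$ together with $(-1)^j(-1)^k = (-1)^{j+k}$ matches $\binom{j+k}{j}\partial^{(j+k)}_z \Omega_z^m$. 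For the second part, when $R$ is a $\mathbb Q$-algebra, I would note that $\tfrac{1}{k!}\partial_z^k z^m = \tfrac{1}{k!}\cdot \tfrac{m!}{(m-k)!} z^{m-k} = \binom{m}{k}z^{m-k} = \partial^{(k)}_z z^m$, and similarly $\tfrac{1}{k!}\partial_z^k \Omega_z^m$ should be checked to equal $(-1)^k\binom{m+k}{k}\Omega_z^{m+k}$; the sign $(-1)^k$ is forced by the definition of $\partial^{(1)}_z$ on $\Omega$-terms (each application of $\partial_z = \partial^{(1)}_z$ contributes a $-1$), and iterating $k$ times accumulates $(-1)^k$ together with the falling-factorial coefficient $(m+1)(m+2)\cdots(m+k) = (m+k)!/m!$, whose product with $1/k!$ is $\binom{m+k}{k}$. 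Then the multi-variable statement $\partial^{(\mathbf k)}_{\mathbf z} = \tfrac{1}{\mathbf k!}\partial^{\mathbf k}_{\mathbf z}$ follows by multiplying over the coordinates, using that the $\partial_{z_i}$ for distinct $i$ commute.

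\medskip

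I do not expect a genuine obstacle here; this is a bookkeeping verification. The one point that requires mild care is the sign convention on the $\Omega$-terms: one must make sure that the single binomial identity used handles both the "lowering" case ($z^m \mapsto z^{m-k-j}$) and the "raising" case ($\Omega_z^m \mapsto \Omega_z^{m+k+j}$), and that the sign $(-1)^{|\mathbf j|}$ in the defining formula is consistent with iterating $\partial^{(1)}_{\mathbf z}$ — i.e. that $\partial^{(\mathbf j)}_{\mathbf z}$ as defined really is compatible with its own composition law when one of the indices is $\mathbf 1$ in a single slot. This is exactly what the composition law being proved guarantees, so there is no circularity as long as one verifies the composition law directly from the monomial formulas rather than by appealing to iteration.
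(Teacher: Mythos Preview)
Your proof is correct. The paper itself states this lemma without proof, treating it as a routine verification, so there is no alternative approach to compare against; your direct check on the spanning elements $\mathbf z^{\mathbf m}$ and $\Omega_{\mathbf z}^{\mathbf m}$ via the standard binomial identities is exactly the expected argument.

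One small remark on presentation: your phrase ``$\Omega_{\mathbf z}^{\mathbf k}$ behaving analogously'' in the factorization step is slightly imprecise, since $\Omega_{\mathbf z}^{\mathbf k}$ is a single indivisible symbol rather than a product $\prod_i \Omega_{z_i}^{k_i}$. What you are really using is that the \emph{scalar coefficients} $\binom{\mathbf k}{\mathbf j}$, $\binom{\mathbf k+\mathbf j}{\mathbf j}$, and $(-1)^{|\mathbf j|}$ all factor as products over the coordinates, so the required binomial identities reduce to their one-variable versions. This is what your computation actually does, and it is correct; the wording could just be tightened.
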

\begin{notation}
    When referring to the operators $\partial^{(\bf j)}_z$ we will sometimes refer specifically to $\partial_{\bf z}^{(\bf e_i)}$ as $\partial_{z_i}$ for ease of reading.
\end{notation}
Given a series $A(\bf z)\in \cal K^{\bf z}_{\mathrm{dist}}(V)$, we split it into regular and singular parts:
\begin{equation}
    A(\bf z) = A_+(\bf z) + A_-(\bf z),\qquad A_+(\bf z) \eqdef \sum_{\bf k\geq 0}\bf z^{\bf k}A_{-\bf 1-\bf k},\qquad A_-(\bf z) \eqdef \sum_{\bf k\geq \bf 0}\Omega_{\bf z}^{\bf k}A_{\bf k}.
\end{equation}
Given $A(\bf z),B(\bf z)\in \cal K^{\bf z}_{\mathrm{dist}}(U)$ whose coefficients are all homogeneous elements of $U$, the product $A_+(\bf z)B_+(\bf z)$ is the usual product of power series, and $A_-(\bf z)B_-(\bf z) = 0$. However, the products
\begin{align*}
    A_+(\bf z)B_-(\bf z) &= \sum_{\bf k\geq \bf 0}\Omega^{\bf k}_{\bf z}\sum_{\bf j\geq \bf 0}A_{-\bf 1-\bf j}B_{\bf k+\bf j}\\
    A_-(\bf z)B_+(\bf z) &= \sum_{\bf k\geq \bf 0}\Omega^{\bf k}_{\bf z}\sum_{\bf j\geq \bf 0}(-1)^{p(A_{-\bf 1-\bf j})N}A_{\bf k+\bf j}B_{-\bf 1-\bf j}
\end{align*}
are well-defined provided that the sums of elements of $U$ on the right hand side are well-defined for all $\bf k\geq \bf 0$. This motivates the definition of fields and the normal ordered product in the next section.

For any $a(\bf z),b(\bf z)\in \cal K^{\bf z}(U)$, we have the usual Leibniz product rule
\begin{equation}
\partial^{(\bf k)}_{\bf z}[a(\bf z)b(\bf z)] = \sum_{\bf 0\leq \bf j\leq \bf k}\partial^{(\bf j)}_{\bf z}a(\bf z)\partial_{\bf z}^{(\bf k-\bf j)}b(\bf z).
\end{equation}

We also manipulate series in two sets of variables. We define the graded $R$-modules
\begin{align}
    \cal K_{\mathrm{dist}}^{\bf z,\bf w}(V) &\eqdef V[\![\bf z,\bf w]\!] \oplus (V[\![\bf z,\Omega_{\bf w}^{\bf k}]\!]_{\bf k\geq \bf 0} \oplus V[\![\bf w,\Omega_{\bf z}^{\bf k}]\!]_{\bf k\geq \bf 0}) \oplus V[\![\Omega_{\bf z}^{\bf j},\Omega_{\bf w}^{\bf k}]\!]_{\bf j,\bf k\geq \bf 0},\\
    \cal K^{\bf z,\bf w}(V) &\eqdef V[\![\bf z,\bf w]\!] \oplus (V[\![\bf z]\!][\Omega_{\bf w}^{\bf k}]_{\bf k\geq \bf 0} \oplus V[\![\bf w]\!][\Omega_{\bf z}^{\bf k}]_{\bf k\geq \bf 0}) \oplus V[\Omega_{\bf z}^{\bf j},\Omega_{\bf w}^{\bf k}]_{\bf j,\bf k\geq \bf 0},\\
    \cal K^{\bf z,\bf w}_{\mathrm{poly}}(V) &\eqdef V[\bf z,\bf w] \oplus (V[\bf z,\Omega_{\bf w}^{\bf k}]_{\bf k\geq \bf 0} \oplus V[\bf w,\Omega_{\bf z}^{\bf k}]_{\bf k\geq \bf 0}) \oplus V[\Omega_{\bf z}^{\bf j},\Omega_{\bf w}^{\bf k}]_{\bf j,\bf k\geq \bf 0}.
\end{align}
Here the first summand is in degree 0, the second in degree $N$, and the third in degree $2N$.
We impose the following graded commutation relations, which arise from the Koszul rule of signs.
\begin{equation}
    \Omega_{\bf z}^{\bf j}\Omega_{\bf w}^{\bf k} = (-1)^{N^2}\Omega_{\bf w}^{\bf k}\Omega_{\bf z}^{\bf j} = (-1)^{N}\Omega_{\bf w}^{\bf k}\Omega_{\bf z}^{\bf j},\quad \bf w^{\bf j}\Omega^{\bf k}_{\bf z} = \Omega^{\bf k}_{\bf z}\bf w^{\bf j},\quad \bf z^{\bf j}\Omega^{\bf k}_{\bf w} = \Omega^{\bf k}_{\bf w}\bf z^{\bf j}.
\end{equation}
For an $R$-algebra $U$, the $R$-module $\cal K_{\mathrm{dist}}^{\bf z,\bf w}(U)$ comes equipped with a partially defined multiplication structure (that is, the multiplication is well-defined when no infinite sums of elements of $U$ occur in the product).

We can naturally extend the residue map $\Res_{\bf z}$ for two-variable series as follows. We define for any $f\in \cal K^w_{\mathrm{dist}}(V)$
\begin{equation*}
    \Res_{\bf z}\left(\bf z^{\bf j}f(\bf w)\right) = 0,\qquad
    \Res_{\bf z}\left(\Omega^{\bf j}_{\bf z}f(\bf w)\right) = \delta_{\bf j,\bf 0}f(\bf w).
\end{equation*}

Recall that for VAs, we work with the $\delta$-distribution
\begin{equation}
    \delta(z) \eqdef \sum_{k\in \Z}z^k,
\end{equation}
and there is also the auxiliary series $\delta(z,w)\eqdef z^{-1}\delta(z/w)$, which is sometimes written suggestively as $\delta(z-w)$ due to its functional identities. We avoid using this latter notation since it conflicts with the notation for $\delta(z)$.
Here we define an analog of this auxiliary series, inspired by \cite{ravioli}:
\begin{defn}
    We define the \textit{$\Delta$-distribution} $\Delta(\bf z,\bf w)\in \cal K^{\bf z,\bf w}_{\mathrm{dist}}(\Z)$ as
    \begin{align}
        \Delta(\bf z,\bf w) &\eqdef \sum_{\bf j\geq \bf 0}\bf w^{\bf j}\Omega_{\bf z}^{\bf j} + (-1)^{N} \bf z^{\bf j}\Omega_{\bf w}^{\bf j}.
    \end{align}
\end{defn}
If one wishes they may also write $\Delta(\bf z-\bf w)$ instead of $\Delta(\bf z,\bf w)$, which is what Garner \& Williams write for $N=n=1$.
We also split this series into two parts, defining
\begin{equation}
    \Delta_-(\bf z,\bf w) \eqdef \sum_{j\geq 0}\bf w^{\bf j}\Omega_{\bf z}^{\bf j},\qquad \Delta_+(\bf z,\bf w) \eqdef (-1)^N\sum_{\bf j\geq \bf 0}\bf z^{\bf j}\Omega_{\bf w}^{\bf j}.
\end{equation}
Thus we have $\Delta(\bf z,\bf w) = \Delta_-(\bf z,\bf w) + \Delta_+(\bf z,\bf w)$.
The series $\Delta(\bf z,\bf w)$ also naturally satisfies many similar properties to that of $\delta(\bf z,\bf w)$:
\begin{lem}\label{lem:deltaidentities}
    The $\Delta$-distribution has the following properties:
    \begin{enumerate}
        \item For all $f(\bf z)\in \cal K^{\bf z,\bf w}_{\mathrm{dist}}(V)$ we have
        \[\Delta(\bf z,\bf w)f(\bf z) = \Delta(\bf z,\bf w)f(\bf w).\]
        \item For any $f(\bf z)\in \cal K^{\bf z,\bf w}_{\mathrm{dist}}(V)$ we have
        \[\Res_z\left(\Delta(\bf z,\bf w)f(\bf z)\right) = f(\bf w).\]
        \item $\Delta_\pm(\bf z,\bf w) = (-1)^{N}\Delta_\mp(\bf w,\bf z)$.
        \item $(\partial_{\bf z}^{(\bf k)}+(-1)^{|\bf k|}\partial_{\bf w}^{(\bf k)})\Delta_\pm(\bf z,\bf w) = 0$ for all $\bf k\geq \bf 0$.
        \item $(\bf z-\bf w)^{\bf j}\partial^{(\bf k)}_w\Delta_\pm(\bf z,\bf w) = \partial^{(\bf k-\bf j)}_w\Delta_\pm(\bf z,\bf w)$ if $\bf k\geq \bf j\geq \bf 0$, and 0 otherwise.
        \item For all $A\in \cal K_{\mathrm{dist}}(V)$ and $\bf k\geq \bf 0$ we have
        \begin{equation}
            \partial^{(\bf k)}_{\bf w}\Delta(\bf z,\bf w)A(\bf z) = \partial^{(\bf k)}_{\bf w}\Delta(\bf z,\bf w)\sum_{\bf 0\leq \bf j\leq \bf k}(\bf z-\bf w)^{\bf j}\partial_{\bf w}^{(\bf j)}A(\bf w).
        \end{equation}
        \item For all $\bf p,\boldsymbol\ell\geq \bf 0$ we have
        \[\partial_{\bf y}^{(\boldsymbol\ell)}\Delta(\bf x,\bf y) \partial_{\bf z}^{(\bf p)}\Delta(\bf y,\bf z) = \sum_{\bf 0\leq \bf k\leq \bf p}\binom{\bf k+\boldsymbol\ell}{\boldsymbol\ell}\partial_{\bf z}^{(\boldsymbol\ell+\bf k)}\Delta(\bf x,\bf z)\partial_{\bf z}^{(\bf p-\bf k)}\Delta(\bf y,\bf z).\]
    \end{enumerate}
\end{lem}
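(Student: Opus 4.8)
The plan is to prove each property of the $\Delta$-distribution essentially by direct computation, reducing everything to the two ``building block'' parts $\Delta_\pm$ and exploiting the symmetry relation in (3). I would organize the proof so that (3) and (4) are proven first, since they are the most basic and the later identities rely on them.

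For part (4), I would compute $\partial_{\bf z}^{(\bf k)}\Delta_-(\bf z,\bf w) = \sum_{\bf j\geq \bf 0}(-1)^{|\bf k|}\binom{\bf j+\bf k}{\bf k}\bf w^{\bf j}\Omega_{\bf z}^{\bf j+\bf k}$ using the definition of the Hasse derivative on the $\Omega$ symbols, and separately $\partial_{\bf w}^{(\bf k)}\Delta_-(\bf z,\bf w) = \sum_{\bf j\geq \bf k}\binom{\bf j}{\bf k}\bf w^{\bf j-\bf k}\Omega_{\bf z}^{\bf j}$; after reindexing $\bf j\mapsto \bf j+\bf k$ in the latter the two expressions cancel up to the sign $(-1)^{|\bf k|}$. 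The computation for $\Delta_+$ is identical up to the overall $(-1)^N$, so (3) follows by directly comparing coefficients in the definitions of $\Delta_\pm(\bf z,\bf w)$ and $\Delta_\mp(\bf w,\bf z)$. For part (5), I would again work with $\Delta_\pm$ separately; the point is that multiplication by $(\bf z-\bf w)^{\bf j}$ — expanded via the binomial theorem as a finite sum of $\bf z^{\bf a}\bf w^{\bf b}$ — acts on $\partial_{\bf w}^{(\bf k)}\Delta_-$ by lowering the $\Omega_{\bf z}$-index using the product rule $\bf z^{\bf a}\Omega_{\bf z}^{\bf c} = \Omega_{\bf z}^{\bf c-\bf a}$ (zero if $\bf a\not\leq \bf c$), and the combinatorial identity that results collapses the binomial sum; alternatively one can first reduce to the $\bf j = \bf e_i$ case and induct. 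Part (1) is the $\bf k=\bf j=\bf 0$ case of a slight generalization of (5) (or can be proven in parallel), and part (2) follows from (1) by taking $\Res_{\bf z}$, using $\Res_{\bf z}(\Omega_{\bf z}^{\bf 0} g(\bf w)) = g(\bf w)$ and $\Res_{\bf z}(\bf z^{\bf j}\Omega_{\bf w}^{\bf k}) = 0$, so that only the $\bf j=\bf 0$ term of $\Delta_-$ survives.

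For part (6), I would start from part (5) in the form $(\bf z-\bf w)^{\bf j}\Delta(\bf z,\bf w) = \Delta(\bf z,\bf w)$ for... — actually more carefully, I would write $A(\bf z) = \sum_{\bf j}(\bf z - \bf w)^{\bf j}\partial_{\bf w}^{(\bf j)}A(\bf w)$ (the Taylor expansion, valid formally when this is interpreted correctly, or rather I would note that the right side telescopes against the $\Delta$ via (5)), multiply by $\partial_{\bf w}^{(\bf k)}\Delta(\bf z,\bf w)$, and use (5) to see that the terms with $\bf j > \bf k$ vanish and the terms with $\bf 0\leq \bf j\leq\bf k$ give exactly the claimed finite sum. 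The cleanest route is probably: apply (1) with $f = A$ to reduce $\Delta(\bf z,\bf w)A(\bf z)$, but since we have $\partial_{\bf w}^{(\bf k)}$ hitting $\Delta$ we cannot directly use (1); instead differentiate the identity $\Delta(\bf z,\bf w)(A(\bf z) - \sum_{\bf j\leq\bf k}(\bf z-\bf w)^{\bf j}\partial_{\bf w}^{(\bf j)}A(\bf w)) = \Delta(\bf z,\bf w)\cdot(\text{terms divisible by }(\bf z-\bf w)^{\bf k+\bf 1})$ and use (5) to kill the right side after applying $\partial_{\bf w}^{(\bf k)}$. Finally, part (7) is an ``associativity/cocycle'' identity for $\Delta$; I expect this to be the main obstacle. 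I would prove it by applying $\Res_{\bf y}$ cleverly, or more directly by using (6) with $A(\bf x) = \partial_{\bf y}^{(\boldsymbol\ell)}\Delta(\bf x,\bf y)$ regarded as a series in $\bf x$ (so that $\partial_{\bf x}^{(\bf j)}$ of it, evaluated via (4) to convert $\bf x$-derivatives to $\bf y$-derivatives, produces the binomial coefficients $\binom{\bf k+\boldsymbol\ell}{\boldsymbol\ell}$), combined with the Leibniz rule; the bookkeeping of the three variables and the signs $(-1)^N$ is where care is most needed, and I would handle it by first treating the purely ``$\Delta_-$'' sector and then propagating signs.

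The step I expect to be hardest is part (7): it is the three-variable identity, it mixes products of $\Omega$-symbols in different variable pairs (so the degree-$2N$ commutation signs $(-1)^N$ enter), and the binomial coefficient $\binom{\bf k+\boldsymbol\ell}{\boldsymbol\ell}$ on the right must be produced from a convolution of Hasse-derivative coefficients — this requires the Vandermonde-type identity $\partial^{(\boldsymbol\ell)}\partial^{(\bf k)} = \binom{\boldsymbol\ell+\bf k}{\boldsymbol\ell}\partial^{(\boldsymbol\ell+\bf k)}$ from the earlier lemma. My fallback if the direct coefficient computation gets unwieldy is to verify (7) after applying $\Res_{\bf x}$ against an arbitrary test series and invoking (2), reducing it to an instance of (6).
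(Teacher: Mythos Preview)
Your proposal is correct and follows essentially the same strategy as the paper: direct expansion for (1)--(5), then (6) from (1) and (5), then (7) from (6).

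The one place where the paper's route is noticeably cleaner is (6). You correctly identify (1) as relevant but then abandon it because $\partial_{\bf w}^{(\bf k)}$ already sits on $\Delta$. The paper simply applies $\partial_{\bf w}^{(\bf k)}$ to both sides of (1): since $A(\bf z)$ is independent of $\bf w$, the left side becomes $(\partial_{\bf w}^{(\bf k)}\Delta(\bf z,\bf w))A(\bf z)$, and on the right the Leibniz rule gives $\sum_{\bf j\leq \bf k}\partial_{\bf w}^{(\bf k-\bf j)}\Delta(\bf z,\bf w)\,\partial_{\bf w}^{(\bf j)}A(\bf w)$; now (5), read as $\partial_{\bf w}^{(\bf k-\bf j)}\Delta = (\bf z-\bf w)^{\bf j}\partial_{\bf w}^{(\bf k)}\Delta$, finishes it. This bypasses any Taylor-expansion of the distribution $A$, which is awkward for the singular part. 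For (7) your instinct to plug $A = \partial^{(\boldsymbol\ell)}\Delta(\bf x,\cdot)$ into (6) is exactly what the paper does; just treat it as a series in $\bf y$ (the argument it shares with $\Delta(\bf y,\bf z)$) rather than in $\bf x$, so that $\partial_{\bf z}^{(\bf j)}A(\bf z) = \binom{\bf j+\boldsymbol\ell}{\boldsymbol\ell}\partial_{\bf z}^{(\bf j+\boldsymbol\ell)}\Delta(\bf x,\bf z)$ by the Hasse-derivative composition law, without any detour through (4).
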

\begin{proof}
    Expand both sides of (1), letting $f(\bf z) = \sum_{\bf k\geq \bf 0}\bf z^{\bf k}f_{-\bf 1-\bf k} + \Omega_{\bf z}^{\bf k}f_{\bf k}$:
    \begin{align*}
        \Delta(\bf z,\bf w)f(\bf z) &= \sum_{\bf j,\bf k\geq \bf 0}\left(\bf w^{\bf k}\Omega_{\bf z}^{\bf k} + (-1)^{N} \bf z^{\bf k}\Omega_{\bf w}^{\bf k}\right)f_{-\bf 1-\bf j}\bf z^{\bf j} + \left(\bf w^{\bf k}\Omega_{\bf z}^{\bf k} + (-1)^{N} \bf z^{\bf k}\Omega_{\bf w}^{\bf k}\right)f_{\bf j}\Omega_{\bf z}^{\bf j}\\
        &= \sum_{\bf j,\bf k\geq \bf 0}\left(\bf w^{\bf k}\Omega_{\bf z}^{\bf k-\bf j} + (-1)^{N} \bf z^{\bf j+\bf k}\Omega_{\bf w}^{\bf k}\right)f_{-\bf 1-\bf j} + (-1)^{N} \Omega_{\bf z}^{\bf j-\bf k}\Omega_{\bf w}^{\bf k}f_{\bf j}\\
        \Delta(\bf z,\bf w)f(\bf w) &= \sum_{\bf j,\bf k\geq \bf 0}\left(\bf w^{\bf k}\Omega_{\bf z}^{\bf k} + (-1)^{N} \bf z^{\bf k}\Omega_{\bf w}^{\bf k}\right)f_{-\bf 1-\bf j}\bf w^{\bf j} + \left(\bf w^{\bf k}\Omega_{\bf z}^{\bf k} + (-1)^{N} \bf z^{\bf k}\Omega_{\bf w}^{\bf k}\right)f_{\bf j}\Omega_{\bf w}^{\bf j}\\
        &= \sum_{\bf j,\bf k\geq \bf 0}\left(\bf w^{\bf j+\bf k}\Omega_{\bf z}^{\bf k} + (-1)^{N} \bf z^{\bf w}\Omega_{\bf w}^{\bf k-\bf j}\right)f_{-\bf 1-\bf j} + \Omega_{\bf w}^{\bf j-\bf k}\Omega_{\bf z}^{\bf k}f_{\bf j}.
    \end{align*}
    Using the commutation relations and reindexing the sums shows us (1). (2) follows from (1). (3), (4), and (5) follow from similar expansions and observations. (6) follows from using the product rule on $\partial^{(\bf j)}_{\bf w}[\Delta(\bf z,\bf w)A(\bf w)]$ and (5). 
    (7) is a corollary of (6).
    % \begin{align*}
    %     \partial_y^{(\ell)}\Delta(x,y) \partial_z^{(p)}\Delta(y,z)
    %      &= 
    %      \partial_z^{(p)}(\partial_y^{(\ell)}\Delta(x,y)\Delta(y,z))\\
    %      &= 
    %      \partial_z^{(p)}(\partial_z^{(\ell)}\Delta(x,z)\Delta(y,z))\\
    %      &= 
    %      \sum_{0\leq k\leq p}\partial_z^{(k)}\partial_z^{(\ell)}\Delta(x,z)\partial_z^{(p-k)}\Delta(y,z)\\
    %      &= 
    %      \sum_{0\leq k\leq p}\binom{k+\ell}{\ell}\partial_z^{(\ell+k)}\Delta(x,z)\partial_z^{(p-k)}\Delta(y,z).
    % \end{align*}
\end{proof}
\begin{remark}
    Consider the analog of property (5) for formal calculus for VAs. We have $(x-y)(x-y)^{-1} = 1$ and $(x-y)(-y+x)^{-1} = 1$, but (5) tells us that $(z_i-w_i)\Delta_\pm(\bf z,\bf w) = 0$ for all $i$. This implies that certain structural aspects of cohomological vertex algebras will be noticeably different from classical vertex algebras.
\end{remark}

\subsection{Taylor expansions and correlation functions}\label{subsec:taylor}
The configuration space of two formal variables $V[\![z,w]\! ][z^{-1},w^{-1},(z-w)^{-1}]$ with coefficients in $V$ plays a key role in the theory of vertex algebras. One may think of this as the global sections of a scheme representing the configuration space of two points in $D_1^\circ$ (with values in $V$).
We define $\cal K^{\bf z,\bf w,\bf z-\bf w}(V)$ to be the graded $R$-module $V[\![\bf z,\bf w]\!][\Omega_{\bf z}^{\bf j},\Omega_{\bf w}^{\bf k},\Omega_{\bf z-\bf w}^{\bf m}]_{\bf j,\bf k,\bf m\geq \bf 0}$ where the degree $N$ forms $\{\Omega_{\bf z}^{\bf j},\Omega_{\bf w}^{\bf k},\Omega_{\bf z-\bf w}^{\bf m}\}_{\bf j,\bf k,\bf m\geq \bf 0}$ are subject to the following relations:
\begin{enumerate}
    \item $\bf x^{\bf j}\Omega_{\bf x}^{\bf k} = \Omega_{\bf x}^{\bf k-\bf j}$ when $\bf k\geq \bf j$ and 0 otherwise for $\bf x=\bf z,\bf w,\bf z-\bf w$.
    \item $\Omega^{\bf j}_{\bf x}\Omega^{\bf k}_{\bf x} = 0$ for all $\bf j,\bf k\geq \bf 0$ and $\bf x=\bf z,\bf w,\bf z-\bf w$.
    \item $(\Omega_{\bf w}^{\bf 0}-\Omega_{\bf z}^{\bf 0})\Omega^{\bf 0}_{\bf z-\bf w} = \Omega^{\bf 0}_{\bf w}\Omega^{\bf 0}_{\bf z}$.
    \item $\partial^{(\bf k)}_{\bf z}\Omega_{\bf z-\bf w}^{\bf j} = (-1)^{|\bf k|}\partial^{(\bf k)}_{\bf w}\Omega_{\bf z-\bf w}^{\bf j} = (-1)^{|\bf k|}\binom{\bf j+\bf k}{\bf k}\Omega_{\bf z-\bf w}^{\bf j+\bf k}$.
\end{enumerate}
We also include all the relations produced by applying $\partial_{\bf w}^{(\bf k)}$ or $\partial_{\bf z}^{(\bf k)}$ to (3).
\begin{lem}\label{lem:multivarembed}
    The assignment
    \begin{equation}
        i_{\bf z,\bf w} \colon \Omega^{\bf m}_{\bf z-\bf w} \mapsto \sum_{\bf k\geq \bf 0}\binom{\bf m+\bf k}{\bf k}\bf w^{\bf k}\Omega_{\bf z}^{\bf m+\bf k} = \partial_{\bf w}^{(\bf m)}\Delta_-(\bf z,\bf w)
    \end{equation}
    defines a graded $R$-algebra embedding $\cal K^{\bf z,\bf w,\bf z-\bf w}\to R\langle\!\langle \bf z\rangle\!\rangle\langle\!\langle \bf w\rangle\!\rangle$. Similarly the assignment
    \begin{equation}
        i_{\bf w,\bf z} \colon \Omega^{\bf m}_{\bf z-\bf w} \mapsto -\sum_{\bf k\geq \bf 0}\bf z^{\bf k}(-1)^{|\bf m|+N}\binom{\bf m+\bf k}{\bf k}\Omega^{\bf m+\bf k}_{\bf w} = -\partial_{\bf w}^{(\bf m)}\Delta_+(\bf z,\bf w)
    \end{equation}
    defines a graded $R$-algebra embedding $\cal K^{\bf z,\bf w,\bf z-\bf w}\to R\langle\!\langle \bf w\rangle\!\rangle\langle\!\langle \bf z\rangle\!\rangle$. 
\end{lem}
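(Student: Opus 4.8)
The plan is to realize $i_{\bf z,\bf w}$ as an algebra map out of a presentation of $\cal K^{\bf z,\bf w,\bf z-\bf w}$ and then prove injectivity by hand; $i_{\bf w,\bf z}$ is handled identically with $\Delta_-$ replaced by $\Delta_+$ everywhere. First I would define the $R[\![\bf z,\bf w]\!]$-algebra map out of the free graded-commutative $R[\![\bf z,\bf w]\!]$-algebra on the degree-$N$ symbols $\Omega_{\bf z}^{\bf j},\Omega_{\bf w}^{\bf k},\Omega_{\bf z-\bf w}^{\bf m}$ into $R\langle\!\langle\bf z\rangle\!\rangle\langle\!\langle\bf w\rangle\!\rangle=\cal K^{\bf w}(\cal K^{\bf z}(R))$ that fixes $\bf z,\bf w,\Omega_{\bf z}^{\bf j},\Omega_{\bf w}^{\bf k}$ and sends $\Omega_{\bf z-\bf w}^{\bf m}\mapsto\partial_{\bf w}^{(\bf m)}\Delta_-(\bf z,\bf w)$ (resp.\ $-\partial_{\bf w}^{(\bf m)}\Delta_+(\bf z,\bf w)$). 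This manifestly preserves the $\Z$-grading, since $\partial_{\bf w}^{(\bf m)}\Delta_-$ lies in the degree-$N$ summand $(\cal K^{\bf z}(R))^{(N)}[\![\bf w]\!]$ of the target. So everything reduces to two points: the map kills the relations (1)--(4) and their $\partial$-derivatives, hence descends to $\cal K^{\bf z,\bf w,\bf z-\bf w}$; and the resulting graded algebra map is injective.

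For well-definedness I would check the relations one at a time, each being a short consequence of Lemma \ref{lem:deltaidentities}. Relation (2) for $\bf x=\bf z-\bf w$ holds because $\partial_{\bf w}^{(\bf m)}\Delta_-$ is an $R[\![\bf w]\!]$-linear combination of the forms $\Omega_{\bf z}^{\bf k}$, which square to zero; for $\bf x=\bf z,\bf w$ it is inherited from the target. Relation (1) for $\bf x=\bf z-\bf w$ translates into $(\bf z-\bf w)^{\bf j}\partial_{\bf w}^{(\bf k)}\Delta_-=\partial_{\bf w}^{(\bf k-\bf j)}\Delta_-$ (and $0$ unless $\bf k\geq\bf j$), which is Lemma \ref{lem:deltaidentities}(5). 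Relation (4) becomes $\partial_{\bf z}^{(\bf k)}\partial_{\bf w}^{(\bf j)}\Delta_-=(-1)^{|\bf k|}\partial_{\bf w}^{(\bf k)}\partial_{\bf w}^{(\bf j)}\Delta_-=(-1)^{|\bf k|}\binom{\bf j+\bf k}{\bf k}\partial_{\bf w}^{(\bf j+\bf k)}\Delta_-$: the first equality follows from the one-line expansion $\partial_{\bf z}^{(\bf k)}\Delta_-=(-1)^{|\bf k|}\partial_{\bf w}^{(\bf k)}\Delta_-$ (compare Lemma \ref{lem:deltaidentities}(4)) together with $[\partial_{\bf z},\partial_{\bf w}]=0$, the second from the Hasse composition law. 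Relation (3): using $\Delta_-(\bf z,\bf w)=\sum_{\bf j\geq\bf 0}\bf w^{\bf j}\Omega_{\bf z}^{\bf j}$ together with $\Omega_{\bf z}^{\bf a}\Omega_{\bf z}^{\bf b}=0$ and $\bf w^{\bf j}\Omega_{\bf w}^{\bf 0}=\delta_{\bf j,\bf 0}\Omega_{\bf w}^{\bf 0}$, one finds $\Omega_{\bf z}^{\bf 0}\Delta_-=0$ and $\Omega_{\bf w}^{\bf 0}\Delta_-=\Omega_{\bf w}^{\bf 0}\Omega_{\bf z}^{\bf 0}$, hence $(\Omega_{\bf w}^{\bf 0}-\Omega_{\bf z}^{\bf 0})\Delta_-=\Omega_{\bf w}^{\bf 0}\Omega_{\bf z}^{\bf 0}$; the derived relations follow by applying $\partial_{\bf z}^{(\bf k)}\partial_{\bf w}^{(\bf l)}$ and the Leibniz rule. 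The same works for $i_{\bf w,\bf z}$ with $\Delta_+$; the explicit minus sign in $i_{\bf w,\bf z}(\Omega_{\bf z-\bf w}^{\bf m})$ is exactly what relation (3) requires, given $\Omega_{\bf z}^{\bf 0}\Omega_{\bf w}^{\bf 0}=(-1)^N\Omega_{\bf w}^{\bf 0}\Omega_{\bf z}^{\bf 0}$.

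Injectivity is the real content; since the map is graded I would argue degree by degree. Degree $0$ is the identity of $R[\![\bf z,\bf w]\!]$. In degree $N$, relations (1)--(2) give a normal form: every element is uniquely $\sum_{\bf b}f_{\bf b}(\bf w)\Omega_{\bf z}^{\bf b}+\sum_{\bf b}g_{\bf b}(\bf z)\Omega_{\bf w}^{\bf b}+\sum_{\bf b}h_{\bf b}(\bf w)\Omega_{\bf z-\bf w}^{\bf b}$ with only finitely many indices $\bf b$ occurring. Its image has the $\Omega_{\bf w}$-part $\sum g_{\bf b}(\bf z)\Omega_{\bf w}^{\bf b}$ untouched and living in a complementary summand of the target, so vanishing of the image forces all $g_{\bf b}=0$; the $\Omega_{\bf z}^{\bf d}$-coefficient of the remaining part of the image is $f_{\bf d}(\bf w)+\sum_{\bf b\leq\bf d}\binom{\bf d}{\bf b}\bf w^{\bf d-\bf b}h_{\bf b}(\bf w)$, and the task is to show that these all vanishing forces all $f_{\bf b}=h_{\bf b}=0$. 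The key observation: let $S$ be the finite support of $h$ and $\bf b^{*}$ its componentwise maximum; for $\bf d$ large enough that $\bf b\leq\bf d$ for all $\bf b\in S$ one gets $\sum_{\bf b\in S}\binom{\bf d}{\bf b}\bf w^{\bf d-\bf b}h_{\bf b}(\bf w)=0$, and dividing by the monomial $\bf w^{\bf d-\bf b^{*}}$ this reads $\sum_{\bf b\in S}\binom{\bf d}{\bf b}\bf w^{\bf b^{*}-\bf b}h_{\bf b}(\bf w)=0$ for all large $\bf d$; since the functions $\bf d\mapsto\binom{\bf d}{\bf b}$ are $R$-linearly independent for distinct $\bf b$ (a Mahler-type basis, seen via iterated finite differences), every $h_{\bf b}=0$, and then every $f_{\bf b}=0$. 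In degree $2N$ the argument is the same in spirit: using the $\partial$-derivatives of relation (3) one reduces the products $\Omega_{\bf z}^{\bf a}\Omega_{\bf z-\bf w}^{\bf b}$ and $\Omega_{\bf w}^{\bf a}\Omega_{\bf z-\bf w}^{\bf b}$ to a normal form supported on two two-index families (for instance $\{\Omega_{\bf z}^{\bf a}\Omega_{\bf w}^{\bf b}\}$ and $\{\bf w^{\bf c}\Omega_{\bf w}^{\bf b}\Omega_{\bf z-\bf w}^{\bf d}\}$) and then re-runs the finite-support-plus-binomial-independence argument.

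The main obstacle is exactly this injectivity bookkeeping: establishing the normal form of $\cal K^{\bf z,\bf w,\bf z-\bf w}$, especially in degree $2N$ where the full $(\bf k,\bf l)$-family of $\partial$-derivatives of relation (3) has to be organized into a terminating set of reductions, and then isolating the principle that ``finitely many relations cannot force infinitely many cancellations'' in the clean form of the $R$-linear independence of the binomial functions $\bf d\mapsto\binom{\bf d}{\bf b}$. Well-definedness, by contrast, is routine once Lemma \ref{lem:deltaidentities} is in hand, and the algebra-homomorphism and grading statements are built into the construction.
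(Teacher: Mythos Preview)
The paper states this lemma without proof, so there is nothing to compare against; your proposal is supplying an argument the paper omits.

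Your outline is sound. Well-definedness is indeed routine from Lemma~\ref{lem:deltaidentities}, and your degree-$N$ injectivity argument is correct: the point that multiplication by $\bf w^{\bf k}$ is injective on $R[\![\bf w]\!]$ (coefficient-shift), together with the finite-difference argument for linear independence of $\bf d\mapsto\binom{\bf d}{\bf b}$ over any commutative ring, handles it cleanly.

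The one place I would press you is degree $2N$. You assert that the $\partial$-derivatives of relation (3) yield a terminating reduction to a normal form supported on two families, but you do not actually verify this. Concretely, applying $\partial_{\bf z}^{(\bf p)}\partial_{\bf w}^{(\bf q)}$ to relation (3) and using the Leibniz rule produces a web of relations among $\Omega_{\bf z}^{\bf a}\Omega_{\bf w}^{\bf b}$, $\Omega_{\bf z}^{\bf a}\Omega_{\bf z-\bf w}^{\bf c}$, $\Omega_{\bf w}^{\bf b}\Omega_{\bf z-\bf w}^{\bf c}$; you need to check that these let you eliminate exactly one of the three families (say $\Omega_{\bf z}^{\bf a}\Omega_{\bf z-\bf w}^{\bf c}$) without introducing new dependencies among the remaining two. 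This is true, but it is the step where the bookkeeping is genuinely nontrivial, and your proposal only names it as an obstacle rather than resolving it. Once that normal form is established, the image computation $\Omega_{\bf w}^{\bf b}\Omega_{\bf z-\bf w}^{\bf c}\mapsto\sum_{\bf 0\leq\bf k\leq\bf b}\binom{\bf c+\bf k}{\bf k}\Omega_{\bf w}^{\bf b-\bf k}\Omega_{\bf z}^{\bf c+\bf k}$ reduces injectivity to a finite triangular system, and the argument closes.
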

One may also construct a graded $R$-algebra embedding $i_{\bf w,\bf z-\bf w}$ from $\cal K^{\bf z,\bf w,\bf z-\bf w}$ into $R\langle\!\langle \bf w\rangle\!\rangle\langle\!\langle \bf z-\bf w\rangle\!\rangle$ by writing $\bf z = \bf w + (\bf z-\bf w)$ and expanding in $\bf z-\bf w$.

Now we discuss how to make sense of the expression $\Omega_{-\bf z}^{\bf k}$ for $\bf k\geq \bf 0$. The expansions described above should be invariant under our choice of notation in the sense that $\Omega^{\bf m}_{\bf z-\bf w}$ and $\Omega^{\bf m}_{-\bf w+\bf z}$ should be the same element, meaning that $i_{\bf w,\bf z}\Omega^{\bf m}_{-\bf w+\bf z} = i_{\bf w,\bf z}\Omega^{\bf m}_{\bf z-\bf w}$:
\[i_{\bf w,\bf z}\Omega^{\bf m}_{-\bf w+\bf z} = \sum_{\bf k\geq \bf 0}\bf z^{\bf k}\partial_{-\bf w}^{(\bf k)}\Omega_{-\bf w}^{\bf m} = \sum_{\bf k\geq \bf 0}\bf z^{\bf k}(-1)^{|\bf k|}\binom{\bf m+\bf k}{\bf k}\Omega_{-\bf w}^{\bf m+\bf k}.\]
Comparing the above expressions, we obtain a natural interpretation of $\Omega_{-\bf w}^{\bf m}$:
\begin{defn}
For all $\bf m\geq \bf 0$ we define
\begin{equation}\label{eq:negativemodedef}
    \Omega^{\bf m}_{-\bf z} = (-1)^{|\bf m|+N+1}\Omega^{\bf m}_{\bf z}.
\end{equation}
\end{defn}
Under this convention, we may naturally interpret
\begin{equation}
    i_{\bf z,\bf w}\Omega^{\bf m}_{\bf z-\bf w} = e^{-\bf w\partial_{\bf z}}\Omega^{\bf m}_{\bf z},
\end{equation}
where $e^{\bf w\partial_{\bf z}}\eqdef\sum_{\bf k\geq \bf 0}\bf w^{\bf k}\partial_{\bf z}^{(\bf k)}$.
We may also discuss the $\Delta$-distribution for different arguments. For example, we have
\begin{equation}
    \Delta_-(-\bf w,-\bf z) = \sum_{\bf k\geq \bf 0}(-\bf w)^{\bf k}\Omega^{\bf k}_{-\bf z} 
    = (-1)^{N+1}\sum_{\bf k\geq \bf 0}\bf w^{\bf k}\Omega^{\bf k}_{\bf z} = -\Delta_+(\bf z,\bf w).
\end{equation}
Thus we have $\Delta(\bf x,\bf y) = \Delta_-(\bf x,\bf y) - \Delta_-(-\bf y,-\bf x)$.

The Jacobi identity for VAs features the three-variable $\delta$-distribution
\[\delta(x-y,z) = e^{-y\partial_x}[z^{-1}\delta(x/z)].\]
This three-variable series has two important identities:
\begin{align*}
    \delta(x-z,y) &= \delta(y+z,x),\\
    \delta(x-z,y) &= \delta(x-y,z) - \delta(-y+x,z).
\end{align*}
Similarly, we define a three-variable $\Delta$-distribution as
\begin{equation}
    \Delta(\bf x-\bf y,\bf z) \eqdef e^{-\bf y\partial_{\bf x}}\Delta(\bf x,\bf z).
\end{equation}
where again $e^{-\bf y\partial_{\bf x}} = \sum_{\bf k\geq \bf 0}(-\bf y)^{\bf k}\partial_{\bf x}^{(\bf k)}$. This analog satisfies nearly the same identities:
\begin{lem}
    We have the following identities:
    \begin{align}
        \Delta(\bf x-\bf z,\bf y) &= (-1)^N\Delta(\bf y+\bf z,\bf x),\\
        \Delta(\bf x-\bf z,\bf y) &= \Delta(\bf x-\bf y,\bf z) - \Delta(-\bf y+\bf x,\bf z).
    \end{align}
\end{lem}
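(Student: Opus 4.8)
The plan is to verify both identities by reducing them to the two-variable $\Delta$-identities already available (Lemma~\ref{lem:deltaidentities}(3) and its consequences) together with the definition $\Delta(\bf x-\bf y,\bf z) \eqdef e^{-\bf y\partial_{\bf x}}\Delta(\bf x,\bf z)$ and the convention $\Omega^{\bf m}_{-\bf z} = (-1)^{|\bf m|+N+1}\Omega^{\bf m}_{\bf z}$. For the first identity, I would expand the left side using the definition: $\Delta(\bf x-\bf z,\bf y) = e^{-\bf z\partial_{\bf x}}\Delta(\bf x,\bf y) = \sum_{\bf k\geq \bf 0}(-\bf z)^{\bf k}\partial_{\bf x}^{(\bf k)}\Delta(\bf x,\bf y)$. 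Using property (4) of Lemma~\ref{lem:deltaidentities}, each $\partial_{\bf x}^{(\bf k)}$ acting on $\Delta_\pm(\bf x,\bf y)$ can be converted to $(-1)^{|\bf k|}\partial_{\bf y}^{(\bf k)}$, turning the sum into $\sum_{\bf k\geq\bf 0}\bf z^{\bf k}\partial_{\bf y}^{(\bf k)}\Delta(\bf x,\bf y) = e^{\bf z\partial_{\bf y}}\Delta(\bf x,\bf y)$. Then I would use property (3), $\Delta_\pm(\bf x,\bf y) = (-1)^N\Delta_\mp(\bf y,\bf x)$, i.e. $\Delta(\bf x,\bf y) = (-1)^N\Delta(\bf y,\bf x)$ (interpreting the swap on the full distribution), so $e^{\bf z\partial_{\bf y}}\Delta(\bf x,\bf y) = (-1)^N e^{\bf z\partial_{\bf y}}\Delta(\bf y,\bf x) = (-1)^N\Delta(\bf y+\bf z,\bf x)$ by the definition of the three-variable distribution with a plus sign (which should be read as $e^{\bf z\partial_{\bf y}}$ applied in the first slot). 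This matches the right-hand side.

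For the second identity, I would start from the first and substitute. By the identity just proved (applied with a relabeling) we have $\Delta(\bf x-\bf z,\bf y) = (-1)^N\Delta(\bf y+\bf z,\bf x)$, and similarly $\Delta(\bf x-\bf y,\bf z) - \Delta(-\bf y+\bf x,\bf z)$ should both be manipulated via the decomposition $\Delta = \Delta_- + \Delta_+$ and the already-established two-variable relation $\Delta(\bf x,\bf y) = \Delta_-(\bf x,\bf y) - \Delta_-(-\bf y,-\bf x)$ from the text preceding the lemma. Concretely I would expand $\Delta(\bf x-\bf y,\bf z) = e^{-\bf y\partial_{\bf x}}\Delta(\bf x,\bf z)$ and $\Delta(-\bf y+\bf x,\bf z) = e^{\bf x\partial_{-\bf y}}\Delta(-\bf y,\bf z)$, using the convention $\Omega^{\bf m}_{-\bf z}=(-1)^{|\bf m|+N+1}\Omega^{\bf m}_{\bf z}$ to make sense of $\Delta(-\bf y,\bf z)$, and check that the difference telescopes to $e^{-\bf z\partial_{\bf x}}\Delta(\bf x,\bf y)$ after reindexing. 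The mechanism mirrors exactly the classical computation $\delta(x-z,y) = \delta(x-y,z) - \delta(-y+x,z)$, which ultimately rests on the elementary expansion identity $\sum_{k}\binom{-1}{k}$-type cancellations; here those are replaced by the $\Delta_\pm$ decomposition and the sign conventions.

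The main obstacle I anticipate is bookkeeping the signs: the convention $\Omega^{\bf m}_{-\bf z} = (-1)^{|\bf m|+N+1}\Omega^{\bf m}_{\bf z}$ introduces a $(-1)^{|\bf m|}$ that interacts with the $(-1)^{|\bf k|}$ coming from $\partial_{\bf x}^{(\bf k)}$ versus $\partial_{\bf y}^{(\bf k)}$ in property (4), and with the overall $(-1)^N$ from property (3). One must be careful that these combine correctly so that the $\Delta_-$ and $\Delta_+$ pieces each transform as claimed rather than getting swapped with an unwanted sign. A secondary subtlety is making precise what $\Delta(\bf y+\bf z,\bf x)$ means — namely $e^{\bf z\partial_{\bf y}}$ applied to the first argument of $\Delta(\bf y,\bf x)$ — and checking this is consistent with the $e^{-\bf y\partial_{\bf x}}$ convention used to define the three-variable distribution in the first place; once that normalization is pinned down, both identities follow from straightforward (if tedious) reindexing of the double sums, exactly paralleling the VA case.
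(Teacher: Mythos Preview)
The paper states this lemma without proof, so there is nothing to compare against directly. Your approach is the natural one and is essentially correct: both identities follow by unwinding the definition $\Delta(\bf x-\bf y,\bf z) = e^{-\bf y\partial_{\bf x}}\Delta(\bf x,\bf z)$ together with Lemma~\ref{lem:deltaidentities}(3),(4) and the sign convention for $\Omega^{\bf m}_{-\bf z}$. For the first identity your chain $e^{-\bf z\partial_{\bf x}}\Delta(\bf x,\bf y) = e^{\bf z\partial_{\bf y}}\Delta(\bf x,\bf y) = (-1)^N e^{\bf z\partial_{\bf y}}\Delta(\bf y,\bf x)$ is exactly right once the signs from (4) are tracked (note that (4) gives $\partial_{\bf x}^{(\bf k)}\Delta = (-1)^{|\bf k|}\partial_{\bf y}^{(\bf k)}\Delta$, which combines with the $(-\bf z)^{\bf k}$ in the exponential to yield $e^{\bf z\partial_{\bf y}}$ with no stray sign). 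For the second identity, the cleanest route is the one you sketch at the end: split $\Delta = \Delta_- + \Delta_+$, observe that $\Delta(\bf x-\bf y,\bf z)$ picks up $e^{-\bf y\partial_{\bf x}}$ on each piece while $\Delta(-\bf y+\bf x,\bf z)$ uses $e^{\bf x\partial_{-\bf y}}$ together with the convention $\Omega^{\bf m}_{-\bf y} = (-1)^{|\bf m|+N+1}\Omega^{\bf m}_{\bf y}$, and then check termwise that the difference reproduces $e^{-\bf z\partial_{\bf x}}\Delta(\bf x,\bf y)$. Your anticipated obstacle---sign bookkeeping---is indeed the only real content here; there is no missing idea.
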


\subsection{Fields and locality}
Before giving the definition of a CVA, we define the space of fields for our state/field correspondence $Y$.
\begin{defn}\label{defn:field}
A \textit{cohomological field} on $V$ (or just a field) is an element
\[A(\bf z) = \sum_{\bf j\geq \bf 0}\bf z^{\bf j}A_{(-\bf 1-\bf j)} + \Omega_{\bf z}^{\bf j}A_{(\bf j)} \in \cal K^{\bf z}_{\mathrm{dist}}(\End(V))\]
such that for all $v\in V$ there exists $\bf K = \bf K_{A,v}\geq 0$ such that $A_{(\bf k)}v = 0$ unless $\bf k\leq \bf K$. Equivalently we require that $A(\bf z)v\in V\langle\!\langle \bf z \rangle\!\rangle$ for all $v\in V$. We denote the space of $\End(V)$-valued cohomological fields as $\cal E_c(V)$.

We say that a field $A(\bf z)\in \cal E_c(V)$ is \textit{homogeneous of cohomological degree $p(A)$} if for all $\bf j\geq \bf 0$ the endomorphism $A_{(-\bf 1-\bf j)}$ is of degree $p(A)$ and $A_{(\bf j)}$ is of degree $p(A)-N$.
\end{defn}
We may define a notion of locality for these fields. Recall that in a $\Z$-graded algebra $U$ we define the graded commutator $[x,y] = xy-p(x,y)yx$ for homogeneous elements $x,y\in U$. We extend this definition $R$-bilinearly to every pair of elements of $U$.
\begin{defn}\label{defn:locality}
    Given $f(\bf z,\bf w)\in \cal K^{\bf z,\bf w}_{\mathrm{dist}}(V)$, we say it is \textit{local with respect to $(\bf z,\bf w)$} if it can be written as a sum
    \[f(\bf z,\bf w) = \sum_{0\leq j\leq K}\partial^{(\bf j)}_{\bf w}\Delta(\bf z,\bf w)g^{j}(\bf w)\]
    for some $K\in \N^n$ and $g^j(\bf w) = \Res_z\left((\bf z-\bf w)^jf(\bf z,\bf w)\right)$.

    Given two fields $A(\bf z),B(\bf w)$, we say they are \textit{mutually local} if the graded commutator $[A(\bf z),B(\bf w)]v]\in \cal K^{\bf z,\bf w}_{\mathrm{dist}}(V)$ is local for all $v\in V$. Equivalently, the series $[A(\bf z),B(\bf w)]\in \cal K^{\bf z,\bf w}_{\mathrm{dist}}(\End(V))$ is local. Here we define
    \[A(\bf w)_{(\bf j)}B(\bf w) = \Res_z\left((\bf z-\bf w)^j[A(\bf z),B(\bf w)]\right).\]
\end{defn}
Note that $A(\bf w)_{(\bf j)}B(\bf w)$ is a homogeneous field of degree $p(A)+p(B)$.

A basic operation one may try to perform with two fields $A(\bf z)$ and $B(\bf z)$ in the same set of variables $\bf z$ is take their product. It turns out that this is not well-defined in general, and it is preferable to work with the following product.
\begin{defn}
    Given homogeneous fields $A,B\in \cal E_c(V)$, we define their \textit{normal ordered product} to be
    \begin{equation}
	\normord{A(\bf z)B(\bf w)} = A(\bf z)_+B(\bf w) + p(A,B)B(\bf w)A(\bf z)_-.
    \end{equation}
\end{defn}
It is straightforward to check that the normal ordered product is well-defined for $\bf z=\bf w$. That is, the series $\normord{AB}\!(\bf z) = \normord{A(\bf z)B(\bf z)}$ is a homogeneous field of degree $p(A)+p(B)$.
We sometimes write $A(\bf z)_{(-\bf 1-\bf k)}B(\bf z) \eqdef\, \normord{\partial_{\bf z}^{(\bf k)}A(\bf z)B(\bf z)}$.
\begin{prop}
    Let $A(\bf z),B(\bf x),C(\bf w)$ be pairwise mutually local homogeneous fields. Then we have
    \begin{align}
        \normord{A(\bf z)B(\bf z)}\! &= p(A,B)\!\normord{B(\bf z)A(\bf z)},\\
        \normord{A(\bf z)\!\normord{B(\bf z)C(\bf z)}}\! &= \ \normord{\normord{A(\bf z)B(\bf z)}\!C(\bf z)}.
    \end{align}
\end{prop}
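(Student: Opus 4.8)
The plan is to establish both identities by reducing everything to manipulations of the regular/singular splitting $A = A_+ + A_-$, together with the fact (used throughout Section~\ref{Sec2}) that for mutually local fields the ``bad'' products $A_-(\bf z)B_-(\bf w)$ vanish and the other three cross-products are well-defined at $\bf z = \bf w$. For the graded-commutativity statement, I would start from the definition $\normord{A(\bf z)B(\bf w)} = A(\bf z)_+B(\bf w) + p(A,B)B(\bf w)A(\bf z)_-$ and compare it with $p(A,B)\normord{B(\bf z)A(\bf z)} = p(A,B)B(\bf z)_+A(\bf z) + B(\bf z)A(\bf z)_-$. After setting $\bf z = \bf w$, the difference of the two sides is $A_+B + p(A,B)BA_- - p(A,B)B_+A - BA_-$, and I would rewrite $A_+B = A_+B_+ + A_+B_-$, $BA_- = B_+A_- + B_-A_-$, etc., and use $A_-B_- = B_-A_- = 0$ (mutual locality) together with the fact that for \emph{regular} series the ordinary product already commutes up to the Koszul sign, i.e. $A_+(\bf z)B_+(\bf z) = p(A,B)B_+(\bf z)A_+(\bf z)$. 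Collecting terms, the difference should be the full graded commutator $[A(\bf z),B(\bf z)]$ evaluated at coincident points, which vanishes because mutual locality forces $[A(\bf z),B(\bf w)]$ to be a finite sum $\sum_{\bf j}\partial_{\bf w}^{(\bf j)}\Delta(\bf z,\bf w)\,g^{\bf j}(\bf w)$, and every term $\partial_{\bf w}^{(\bf j)}\Delta(\bf z,\bf w)$ becomes ill-defined — more precisely, by Lemma~\ref{lem:deltaidentities}(5) the coincidence-point restriction of such a sum vanishes in the appropriate sense. I would phrase this cleanly using that $\normord{AB}$ is by construction a genuine field (so the restriction makes sense) and that the commutator contributes only $\delta$-type singularities that the normal ordering is designed to subtract.

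For associativity I would expand $\normord{A(\bf z)\normord{B(\bf z)C(\bf z)}}$ by first writing $D(\bf z) \eqdef \normord{B(\bf z)C(\bf z)} = B(\bf z)_+C(\bf z) + p(B,C)C(\bf z)B(\bf z)_-$, splitting $D = D_+ + D_-$, and then forming $A(\bf z)_+D(\bf z) + p(A,BC)D(\bf z)A(\bf z)_-$; symmetrically expand the right-hand side with $E(\bf z) \eqdef \normord{A(\bf z)B(\bf z)}$. Both sides unwind into sums of triple products of the six half-fields $A_\pm, B_\pm, C_\pm$; one checks term by term that the terms with two or more ``$-$'' factors on the inside vanish by mutual locality, and the surviving terms on the two sides match after reordering via the Koszul sign rule — the point being that $A_+$ commutes past everything it needs to (it is regular in $\bf z$), and the only genuinely singular contributions are already in normal-ordered position on both sides. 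This is the same bookkeeping as the classical vertex-algebra proof that iterated normal orderings associate, adapted to the fact that here $\bf z$ is a single set of variables rather than three.

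The main obstacle is the same one flagged in the Remark after Lemma~\ref{lem:deltaidentities}: unlike the classical case where $(x-y)(x-y)^{-1}=1$, here $(z_i - w_i)\Delta_\pm(\bf z,\bf w) = 0$, so I cannot freely ``divide by $\bf z - \bf w$'' to identify contact-term contributions, and I must be careful that the coincidence-point restriction $\bf z \to \bf w$ of things like $\partial_{\bf w}^{(\bf j)}\Delta(\bf z,\bf w)\,g^{\bf j}(\bf w)$ is interpreted consistently with the definition of $\normord{\,\cdot\,}$ as a well-defined field. The cleanest route around this is probably to avoid literal substitution entirely: prove both identities first as identities of two-variable series in $\cal K^{\bf z,\bf w}_{\mathrm{dist}}(\End(V))$ (or even via Goddard uniqueness / correlation functions if those are available), where the commutator terms are honestly present and visibly local, and only then restrict to $\bf z = \bf w$, at which stage the locality structure guarantees that precisely the commutator discrepancies drop out. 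I would also double-check the sign $p(A,BC) = p(A,B)p(A,C)$ and its compatibility with $p(A\Omega) = (-1)^{p(A)N}\Omega A$ so that the Koszul signs on the two sides genuinely agree; a sign error here is the most likely place for the argument to go wrong.
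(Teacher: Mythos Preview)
Your strategy is essentially the paper's: reduce to the two observations $A_-(\bf z)B_-(\bf z) = 0$ and $[A_+(\bf z),B_+(\bf z)] = 0$ (likewise for the other pairs), then expand both sides into products of half-fields. However, you have the \emph{reasons} for these two facts reversed, and that misattribution is what generates the detour in your last paragraph.

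The vanishing $A_-(\bf z)B_-(\bf z) = 0$ has nothing to do with locality: it is the purely algebraic relation $\Omega_{\bf z}^{\bf j}\Omega_{\bf z}^{\bf k} = 0$ in $\cal K^{\bf z}$. (In two distinct sets of variables $A_-(\bf z)B_-(\bf w)$ is \emph{not} zero, contrary to what you wrote.) Conversely, $[A_+(\bf z),B_+(\bf z)] = 0$ is \emph{not} a general fact about regular series with operator coefficients; this is precisely where mutual locality enters. The clean argument is: by locality $[A(\bf z),B(\bf w)] = \sum_{\bf j}\partial_{\bf w}^{(\bf j)}\Delta(\bf z,\bf w)\,g^{\bf j}(\bf w)$, and every such term lives in cohomological degree $N$ or $2N$ in the $\Omega$-grading, so its degree-$0$ (pure power-series) part vanishes. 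Hence $[A_+(\bf z),B_+(\bf w)] = 0$ already as a two-variable identity, and one may then legitimately set $\bf z = \bf w$.

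With this in hand the difference $\normord{AB} - p(A,B)\normord{BA}$ collapses to $[A_+(\bf z),B_+(\bf z)]$ (not the full commutator, as you wrote), and no coincidence-point restriction of $\Delta(\bf z,\bf w)$ is ever required; the obstacle you flag in the last paragraph simply does not arise. Your associativity bookkeeping is fine and is what the paper intends: the discrepancy between the two nestings reduces to $p(B,C)[A_+,C_+]B_- + p(A,BC)[B_+,C_+]A_-$, which vanishes by the same argument.
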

The proof is the same as in \cite[Proposition 2.2.6]{ravioli}, which amounts to observing that $A(\bf z)_-B(\bf z)_-=0$ and $[A(\bf z)_+,B(\bf z)_+] = 0$ (likewise for the pairs $(A,C)$ and $(B,C)$). This is a strong departure from the behavior of the normal ordered product of fields for VAs, where neither of the above properties usually hold.

Here are two equivalent presentations of mutual locality.
\begin{lem}
    Two homogeneous fields $A,B\in \cal E_c(V)$ are mutually local if and only if for the same choice of $\bf K\in \N^n$ we have the following operator product expansions (OPEs)
    \begin{align*}
        A(\bf z)B(\bf w) &=\, \normord{A(\bf z)B(\bf w)} + \sum_{\bf 0\leq \bf j\leq \bf K}\partial^{(\bf j)}_{\bf w}\Delta_-(\bf z,\bf w)A(\bf w)_{(\bf j)}B(\bf w)\\
        p(A,B)B(\bf w)A(\bf z) &=\, \normord{A(\bf z)B(\bf w)} - \sum_{\bf 0\leq \bf j\leq \bf K}\partial^{(\bf j)}_{\bf w}\Delta_+(\bf z,\bf w)A(\bf w)_{(\bf j)}B(\bf w).
    \end{align*}
\end{lem}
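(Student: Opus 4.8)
The plan is to prove the equivalence by a chain of reformulations, using the $\Delta$-distribution machinery from Lemma~\ref{lem:deltaidentities} and the decomposition of $\Delta$ into $\Delta_\pm$. First I would record the basic splitting: for any two fields $A,B\in\cal E_c(V)$ and any $v\in V$, the products $A(\bf z)B(\bf w)v$ and $p(A,B)B(\bf w)A(\bf z)v$ both lie in $\cal K^{\bf z,\bf w}_{\mathrm{dist}}(V)$, since $A(\bf z)v$ has finitely many singular terms and likewise for $B$. One computes directly that
\[
A(\bf z)B(\bf w) - \normord{A(\bf z)B(\bf w)} = A(\bf z)_-B(\bf w) - p(A,B)B(\bf w)A(\bf z)_-,
\]
and similarly $\normord{A(\bf z)B(\bf w)} - p(A,B)B(\bf w)A(\bf z) = A(\bf z)_+B(\bf w) - p(A,B)B(\bf w)A(\bf z)_+$... wait, more cleanly: subtracting the two displayed OPEs in the statement gives exactly $A(\bf z)B(\bf w) - p(A,B)B(\bf w)A(\bf z) = [A(\bf z),B(\bf w)] = \sum_{\bf 0\le\bf j\le\bf K}\partial^{(\bf j)}_{\bf w}\Delta(\bf z,\bf w)\,A(\bf w)_{(\bf j)}B(\bf w)$, using $\Delta = \Delta_-+\Delta_+$. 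So the two OPEs together are equivalent to this single identity plus the claim that the ``positive part'' of the OPE is precisely $\normord{A(\bf z)B(\bf w)}$.

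The key steps, in order: (i) Show that $[A(\bf z),B(\bf w)]$ being local (Definition~\ref{defn:locality}) is equivalent to the single displayed identity $[A(\bf z),B(\bf w)] = \sum_{\bf 0\le\bf j\le\bf K}\partial^{(\bf j)}_{\bf w}\Delta(\bf z,\bf w)\,g^{\bf j}(\bf w)$ with $g^{\bf j}(\bf w) = A(\bf w)_{(\bf j)}B(\bf w) = \Res_{\bf z}((\bf z-\bf w)^{\bf j}[A(\bf z),B(\bf w)])$; this is immediate from the definition of locality once one checks the finiteness of $\bf K$, which follows because applied to any $v$ only finitely many singular modes survive. (ii) Split $\Delta = \Delta_- + \Delta_+$ in that identity to get
\[
A(\bf z)B(\bf w) - p(A,B)B(\bf w)A(\bf z) = \sum_{\bf 0\le\bf j\le\bf K}\bigl(\partial^{(\bf j)}_{\bf w}\Delta_-(\bf z,\bf w) + \partial^{(\bf j)}_{\bf w}\Delta_+(\bf z,\bf w)\bigr)A(\bf w)_{(\bf j)}B(\bf w).
\]
(iii) Observe that $\partial^{(\bf j)}_{\bf w}\Delta_-(\bf z,\bf w)$ involves only $\Omega_{\bf z}^{\bf k}$ (``singular in $\bf z$, regular in $\bf w$'') while $\partial^{(\bf j)}_{\bf w}\Delta_+(\bf z,\bf w)$ involves only $\Omega_{\bf w}^{\bf k}$; match this with the decomposition $A(\bf z)B(\bf w) = A(\bf z)_+B(\bf w) + A(\bf z)_-B(\bf w)$ and the fact that $A(\bf z)_-B(\bf w)$ lands in the $\Omega_{\bf z}$-part and $p(A,B)B(\bf w)A(\bf z)_-$ in the same part, whereas $A(\bf z)_+B(\bf w)$ and $p(A,B)B(\bf w)A(\bf z)_+$ are regular in $\bf z$. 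Comparing the $\Omega_{\bf z}$-graded pieces of both sides isolates $A(\bf z)_-B(\bf w) - p(A,B)B(\bf w)A(\bf z)_- = \sum \partial^{(\bf j)}_{\bf w}\Delta_-(\bf z,\bf w)A(\bf w)_{(\bf j)}B(\bf w)$, which rearranges to the first OPE; the complementary piece gives the second. (iv) Conversely, adding the two OPEs recovers $2\normord{A(\bf z)B(\bf w)}$ on one side and hence is automatically consistent, while subtracting recovers locality — so the two-OPE system is equivalent to mutual locality.

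The main obstacle I anticipate is step (iii): carefully justifying that the ``$\bf z$-singular'' and ``$\bf z$-regular'' components really do separate as claimed, i.e. that one may read off coefficients of $\Omega_{\bf z}^{\bf k}$ versus $\bf z^{\bf k}$ on both sides of an identity in $\cal K^{\bf z,\bf w}_{\mathrm{dist}}(\End(V))$ when applied to a fixed $v$. This requires knowing that all the sums in question are well-defined (no infinite sums of endomorphisms applied to $v$), which is exactly the field condition in Definition~\ref{defn:field}, together with the fact that $A(\bf w)_{(\bf j)}B(\bf w)$ is again a field (noted right after Definition~\ref{defn:locality}) so the OPE sums make sense. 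One must also track the Koszul sign $p(A,B)$ through the graded commutator and through the graded-commutativity of the $\Omega$-symbols with elements of $\End(V)$; since $\Delta(\bf z,\bf w)$ has coefficients in $\Z$ the signs only enter via the $B(\bf w)A(\bf z)$ reordering, so this is bookkeeping rather than a genuine difficulty. The rest is a direct expansion using Lemma~\ref{lem:deltaidentities}(5) to verify $\Res_{\bf z}((\bf z-\bf w)^{\bf j}\partial^{(\bf k)}_{\bf w}\Delta_\pm(\bf z,\bf w)) $ picks out the right coefficient, confirming $g^{\bf j} = A(\bf w)_{(\bf j)}B(\bf w)$.
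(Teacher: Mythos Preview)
Your proposal is correct and follows the same approach as the paper. The paper's proof simply records the two algebraic identities
\[
A(\bf z)B(\bf w) = \normord{A(\bf z)B(\bf w)} + [A(\bf z)_-,B(\bf w)],\qquad
p(A,B)B(\bf w)A(\bf z) = \normord{A(\bf z)B(\bf w)} - [A(\bf z)_+,B(\bf w)],
\]
and leaves implicit the step you spell out in (iii): splitting the local expansion of $[A(\bf z),B(\bf w)]$ into its $\Omega_{\bf z}$-part (which matches $[A(\bf z)_-,B(\bf w)]$ and picks up $\Delta_-$) and its $\bf z$-regular part (which matches $[A(\bf z)_+,B(\bf w)]$ and picks up $\Delta_+$). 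Your step (iv) is slightly garbled---adding the two OPEs does not give $2\normord{A(\bf z)B(\bf w)}$ alone, since the $\Delta_-$ and $\Delta_+$ sums have opposite signs but are different series---but the converse direction only needs the subtraction, which you state correctly.
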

\begin{proof}
    These formulas follow from the following identities:
    \begin{align*}
        A(\bf z)B(\bf w) &=\, \normord{A(\bf z)B(\bf w)} + [A(\bf z)_-,B(\bf w)]\\
        p(A,B)B(\bf w)A(\bf z) &=\, \normord{A(\bf z)B(\bf w)} - [A(\bf z)_+,B(\bf w)].
    \end{align*}
\end{proof}
Here is a more analytic interpretation of the above notion of locality:
\begin{prop}\label{prop:localanalytic}
    Let $A,B\in \cal E_c(V)$ be homogeneous. Then $A(\bf z),B(\bf w)$ are mutually local if and only if for all $v\in V$ the series
    \[A(\bf z)B(\bf w)v\in V\langle\!\langle \bf z\rangle\!\rangle\langle\!\langle \bf w\rangle\!\rangle \quad \mathrm{and}\quad p(A,B)B(\bf w)A(\bf z)v\in V\langle\!\langle \bf w\rangle\!\rangle\langle\!\langle \bf z\rangle\!\rangle\]
    are expansions of the same element $f_v\in \cal K^{\bf z,\bf w,\bf z-\bf w}(V)$ such that there exists $\bf K\geq \bf 0$ (independent of $v$) such that the coefficient in front of $\Omega_{\bf z-\bf w}^{\bf k}$ in $f_v$ is zero unless $\bf k\leq \bf K$.
\end{prop}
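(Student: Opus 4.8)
The plan is to prove both implications by comparing the two expansions through the embeddings $i_{\bf z,\bf w}$ and $i_{\bf w,\bf z}$ from Lemma \ref{lem:multivarembed}, using the fact that the defining identities for locality are stated precisely in terms of $\partial^{(\bf j)}_{\bf w}\Delta_\pm(\bf z,\bf w)$.

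For the forward direction, suppose $A(\bf z), B(\bf w)$ are mutually local, so that by the OPE lemma above, for a fixed $\bf K\in \N^n$ and every $v\in V$,
\[
A(\bf z)B(\bf w)v = \normord{A(\bf z)B(\bf w)}v + \sum_{\bf 0\leq \bf j\leq \bf K}\partial^{(\bf j)}_{\bf w}\Delta_-(\bf z,\bf w)\,A(\bf w)_{(\bf j)}B(\bf w)v,
\]
and similarly for $p(A,B)B(\bf w)A(\bf z)v$ with $\Delta_+$ in place of $\Delta_-$. Since $A,B$ are fields, $A(\bf w)_{(\bf j)}B(\bf w)v \in V\langle\!\langle\bf w\rangle\!\rangle$, and the normal ordered product $\normord{A(\bf z)B(\bf w)}v$ lies in $V[\![\bf z]\!]\langle\!\langle\bf w\rangle\!\rangle \oplus \cdots$ — in all cases a genuine element of $\cal K^{\bf z,\bf w,\bf z-\bf w}(V)$ once we recognize $\partial^{(\bf j)}_{\bf w}\Delta_-(\bf z,\bf w) = i_{\bf z,\bf w}\Omega^{\bf j}_{\bf z-\bf w}$ and $-\partial^{(\bf j)}_{\bf w}\Delta_+(\bf z,\bf w) = i_{\bf w,\bf z}\Omega^{\bf j}_{\bf z-\bf w}$ from Lemma \ref{lem:multivarembed}. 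Define
\[
f_v \eqdef \normord{A(\bf z)B(\bf w)}v + \sum_{\bf 0\leq \bf j\leq \bf K}\Omega^{\bf j}_{\bf z-\bf w}\,A(\bf w)_{(\bf j)}B(\bf w)v \in \cal K^{\bf z,\bf w,\bf z-\bf w}(V).
\]
Then $i_{\bf z,\bf w}f_v = A(\bf z)B(\bf w)v$ and $i_{\bf w,\bf z}f_v = p(A,B)B(\bf w)A(\bf z)v$ (here one must be slightly careful that $i_{\bf w,\bf z}$ applied to the normal ordered part recovers $p(A,B)B(\bf w)A(\bf z)_-v + A(\bf z)_+B(\bf w)v$ expanded the other way — but since that part contains no $\Omega_{\bf z-\bf w}$ factors it is fixed by both embeddings up to reindexing, and the sign discrepancy is absorbed by the $-$ sign relating $i_{\bf w,\bf z}\Omega^{\bf j}_{\bf z-\bf w}$ to $\Delta_+$). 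The bound $\bf K$ on the $\Omega^{\bf j}_{\bf z-\bf w}$-coefficients of $f_v$ is exactly the $\bf K$ from the OPE, independent of $v$.

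For the converse, suppose such an $f_v$ exists with the uniform bound $\bf K$. Write $f_v = \sum_{\bf 0\leq \bf j}\Omega^{\bf j}_{\bf z-\bf w}\,g^{\bf j}_v(\bf w) + (\text{no-}\Omega_{\bf z-\bf w}\text{ terms})$, where the sum is finite by hypothesis and $g^{\bf j}_v(\bf w) = \Res_{\bf z}\big((\bf z-\bf w)^{\bf j}f_v\big)$ using the relations defining $\cal K^{\bf z,\bf w,\bf z-\bf w}$ and Lemma \ref{lem:deltaidentities}(2),(5). Applying $i_{\bf z,\bf w}$ and $i_{\bf w,\bf z}$ and subtracting, the no-$\Omega_{\bf z-\bf w}$ part cancels (it maps identically under both, being a power series in $\bf z,\bf w$ with only $\Omega_{\bf z},\Omega_{\bf w}$ forms, on which $i_{\bf z,\bf w}$ and $i_{\bf w,\bf z}$ agree), leaving
\[
A(\bf z)B(\bf w)v - p(A,B)B(\bf w)A(\bf z)v = \sum_{\bf 0\leq \bf j\leq \bf K}\big(\partial^{(\bf j)}_{\bf w}\Delta_-(\bf z,\bf w) + \partial^{(\bf j)}_{\bf w}\Delta_+(\bf z,\bf w)\big)g^{\bf j}_v(\bf w) = \sum_{\bf 0\leq \bf j\leq \bf K}\partial^{(\bf j)}_{\bf w}\Delta(\bf z,\bf w)g^{\bf j}_v(\bf w).
\]
That is, $[A(\bf z),B(\bf w)]v$ is local in the sense of Definition \ref{defn:locality}, with $g^{\bf j}_v(\bf w) = A(\bf w)_{(\bf j)}B(\bf w)v$; since $\bf K$ is independent of $v$, the $\End(V)$-valued commutator is local, so $A,B$ are mutually local.

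The main obstacle I expect is bookkeeping the identification of the normal-ordered ("regular") part of $f_v$ under the two embeddings and making sure no spurious sign appears: one must verify that $i_{\bf z,\bf w}$ and $i_{\bf w,\bf z}$ genuinely agree on the subalgebra $V[\![\bf z,\bf w]\!][\Omega^{\bf j}_{\bf z},\Omega^{\bf k}_{\bf w}]$ (i.e. on elements with no $\Omega_{\bf z-\bf w}$ factor), and that the $-1$ in the formula for $i_{\bf w,\bf z}\Omega^{\bf m}_{\bf z-\bf w}$, combined with Lemma \ref{lem:deltaidentities}(3), reproduces exactly the $\Delta_+$ versus $\Delta_-$ discrepancy in the two OPEs. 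A secondary point is checking that $f_v$ really lands in $\cal K^{\bf z,\bf w,\bf z-\bf w}(V)$ rather than a completion — this uses the field axiom (the $A(\bf w)_{(\bf j)}B(\bf w)v$ are honest elements of $V\langle\!\langle\bf w\rangle\!\rangle$) and the finiteness of the $\bf j$-sum. Everything else is a direct translation through Lemmas \ref{lem:deltaidentities} and \ref{lem:multivarembed}.
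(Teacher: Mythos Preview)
Your proposal is correct and follows essentially the same approach as the paper: both directions hinge on the OPE lemma together with the fact that $i_{\bf z,\bf w}\Omega^{\bf j}_{\bf z-\bf w}=\partial^{(\bf j)}_{\bf w}\Delta_-(\bf z,\bf w)$ and $i_{\bf w,\bf z}\Omega^{\bf j}_{\bf z-\bf w}=-\partial^{(\bf j)}_{\bf w}\Delta_+(\bf z,\bf w)$, with the converse obtained by decomposing $f_v$ into its $\Omega_{\bf z-\bf w}$ and non-$\Omega_{\bf z-\bf w}$ parts and subtracting the two embeddings. The only minor point the paper makes explicit that you leave implicit is the $R$-linearity of $v\mapsto g^{\bf j}_v(\bf w)$, which justifies promoting the pointwise identity to an $\End(V)$-valued one; your identification $g^{\bf j}_v(\bf w)=A(\bf w)_{(\bf j)}B(\bf w)v$ handles this directly.
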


\begin{proof}
Suppose $A,B$ are mutually local. Consider the OPEs from the previous lemma.
We just need to examine the second term in the OPE for $p(A,B)B(\bf w)A(\bf z)v$ since the normal ordered product term is in $\cal K^{\bf z,\bf w}(V)$.
Since $\partial^{(\bf k)}_{\bf w}\Delta_-(\bf z,\bf w)$ and $-\partial^{(k)}_w\Delta_+(\bf z,\bf w)$ are the expansions of $\Omega_{\bf z-\bf w}^{\bf k}$ in their respective domains, we obtain that $A(z)B(w)v$ and $p(A,B)B(\bf w)A(\bf z)v$ are the expansions of the same element in $\cal K^{\bf z,\bf w,\bf z-\bf w}(V)$. The pole is uniformly bounded by locality since 
$\sum_{\bf k\geq \bf 0}\partial^{(\bf k)}_{\bf w}\Delta(\bf z,\bf w)C^{\bf k}(\bf w)$ is a finite sum. 

Conversely, we may uniquely write $f_v\in \cal K^{\bf z,\bf w,\bf z-\bf w}(V)$ as a sum
\[f_v = g_v(\bf z,\bf w) + \sum_{\bf 0\leq \bf k\leq \bf K}\Omega^{\bf k}_{\bf z-\bf w}f^{\bf k}_v(\bf w),\]
where $f^{\bf k}_v(\bf w)\in V\langle\!\langle \bf w\rangle\!\rangle$ and $g_v\in \cal K^{\bf z,\bf w}(V)$. By applying the embeddings from the previous lemma, we obtain the embeddings
\begin{align*}
    f_v &\mapsto g_v(\bf z,\bf w) + \sum_{\bf 0\leq \bf k\leq \bf K}\partial_{\bf w}^{(\bf k)}\Delta_-(\bf z,\bf w)f^{\bf k}_v(\bf w)\\
    f_v &\mapsto g_v(\bf z,\bf w) - \sum_{\bf 0\leq \bf k\leq \bf K}\partial_{\bf w}^{(\bf k)}\Delta_+(\bf z,\bf w)f^{\bf k}_v(\bf w).
\end{align*}
Taking the difference in $\cal K^{\bf z,\bf w}_{\mathrm{dist}}(V)$, we obtain
\[[A(\bf z),B(\bf w)]v = \sum_{\bf 0\leq \bf k\leq \bf K}\partial^{(\bf k)}_{\bf w}\Delta(\bf z,\bf w)f^{\bf k}_v(\bf w),\]
which holds for all $v$. Since $f^{\bf k}_v(\bf w)$ is $R$-linear in $v$, we conclude that $f^{\bf k}_v(\bf w)$ must be of the form $C^{\bf k}(\bf w)v$ for some $C^{\bf k}(\bf w)\in \cal K^{\bf w}_{\mathrm{dist}}(\End(V))$.
\end{proof}
For shorthand, the condition on the existence of $\bf K\geq \bf 0$ may be rephrased as saying that the order of the pole at $\bf z-\bf w$ is uniformly bounded for all $v\in V$.

Now we provide a version of Dong's lemma, which is slightly weaker than the usual statement for VAs (not that it will matter).
\begin{lem}[Dong-Li lemma]\label{lem:Dongpoly}
    Let $A,B,C$ be fields on $V$ with some $\bf r\geq \bf 0$ such that for all $i$ we have
    \[(z_i-x_i)^{r_i}[A(\bf z),B(\bf x)] = (z_i-w_i)^{r_i}[A(\bf z),C(\bf w)] = (x_i-w_i)^{r_i}[B(\bf x),C(\bf w)] = 0.\]
Then the following hold:
\begin{enumerate}
    \item For all $i$ and $k\geq 0$ we have $(z_i-w_i)^{3r_i}[A(\bf z),B(\bf w)_{(\bf k)}C(\bf w)]=0$.
    \item 
    Further suppose that $A,B$ and $A,C$ are mutually local, with
    \begin{align*}
        [A(\bf z),B(\bf x)] &= \sum_{\bf 0\leq \bf m\leq \bf r-\bf 1}\partial^{(\bf m)}_{\bf x}\Delta(\bf z,\bf x)D^{\bf m}(\bf x),\\
        [A(\bf z),C(\bf w)] &= \sum_{\bf 0\leq \bf m\leq \bf r-\bf 1}\partial^{(\bf m)}_{\bf w}\Delta(\bf z,\bf w)E^{\bf m}(\bf w).
        % [B(x),C(w)] &= \sum_{0\leq m\leq r-1}\partial^{(m)}_w\Delta(x,w)F^m(w).
    \end{align*}
    Then the pair $A(\bf z)$ and $\normord{BC}\!(\bf w)$ is mutually local, with commutator
\begin{equation}\label{eq:normordbracket}
\begin{aligned}
    &[A(\bf z),\normord{BC}\!(\bf w)] \\&= \sum_{m\geq 0}\partial^{(\bf m)}_w\Delta(\bf z,\bf w)\Big(\normord{D^{\bf m}C}(\bf w) + (-1)^{p(B)(p(A)+N)}\normord{BE^{\bf m}}(\bf w)\Big).
\end{aligned}
\end{equation}
\end{enumerate}
\end{lem}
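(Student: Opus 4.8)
The argument parallels the classical Dong's lemma proof for vertex algebras, adapted to the $\Delta$-distribution formalism. For part (1), the plan is to estimate the order of the pole of $[A(\bf z), B(\bf w)_{(\bf k)}C(\bf w)]$ at $\bf z - \bf w$ componentwise. First I would substitute $B(\bf w)_{(\bf k)}C(\bf w) = \Res_{\bf x}\left((\bf x - \bf w)^{\bf k}[B(\bf x), C(\bf w)]\right)$ (using Definition \ref{defn:locality}) and expand the graded commutator $[A(\bf z), [B(\bf x), C(\bf w)]]$ using the graded Jacobi identity for the commutator in $\End(V)$. This rewrites the bracket $[A(\bf z), B(\bf w)_{(\bf k)}C(\bf w)]$ in terms of $[[A(\bf z), B(\bf x)], C(\bf w)]$ and $[B(\bf x), [A(\bf z), C(\bf w)]]$. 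Then, for a fixed index $i$, I would multiply by $(z_i - w_i)^{3r_i}$ and use the classical polynomial trick: write $(z_i - w_i) = (z_i - x_i) + (x_i - w_i)$, expand $(z_i - w_i)^{3r_i}$ binomially, and in each term distribute enough factors of $(z_i - x_i)^{r_i}$, $(x_i - w_i)^{r_i}$, and (after reindexing the residue in $\bf x$, which shifts $(x_i - w_i)$ against $(z_i - x_i)$) enough factors of $(z_i - w_i)^{r_i}$ to annihilate each of the three iterated brackets via the hypotheses. Crucially, because $r_i \geq 1$, the residue $\Res_{x_i}$ only picks up finitely many relevant powers, so the bookkeeping closes; the exponent $3r_i$ is exactly what is needed to guarantee that in every binomial term one of the three vanishing conditions applies.

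For part (2), the plan is to directly compute $[A(\bf z), \normord{BC}\!(\bf w)]$ by splitting the normal ordered product into its two defining pieces $\normord{B(\bf w)C(\bf w)} = B(\bf w)_+ C(\bf w) + p(B,C) C(\bf w) B(\bf w)_-$ and commuting $A(\bf z)$ past each. I would write $[A(\bf z), B(\bf w)_\pm C(\bf w)] = [A(\bf z), B(\bf w)_\pm] C(\bf w) + p(A,B) B(\bf w)_\pm [A(\bf z), C(\bf w)]$ (with appropriate signs from the grading, noting $p(B(\bf w)_\pm) = p(B)$ only for the regular part and $p(B) - N$ must be tracked carefully for $B_-$, contributing the $(-1)^{p(B)(p(A)+N)}$ sign), and similarly for the $C(\bf w) B(\bf w)_-$ term. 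Substituting the given OPE expressions for $[A(\bf z), B(\bf x)]$ and $[A(\bf z), C(\bf w)]$ — evaluated at $\bf x = \bf w$ where appropriate using Lemma \ref{lem:deltaidentities}(1) (the property $\Delta(\bf z, \bf w) f(\bf z) = \Delta(\bf z, \bf w) f(\bf w)$, and its derivatives via part (6)) — the $\partial^{(\bf m)}_{\bf w}\Delta(\bf z, \bf w)$ factors pull out, and the remaining operator-valued coefficients reassemble into $\normord{D^{\bf m} C}(\bf w)$ and $\normord{B E^{\bf m}}(\bf w)$ after using that $D^{\bf m}(\bf w)_\pm$ and $E^{\bf m}(\bf w)_\pm$ distribute correctly through the normal ordering (since $\Delta_-$-type terms only hit the "$-$" parts and $\Delta_+$-type the "$+$" parts, matching the definition of $\normord{\,\cdot\,}$). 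Mutual locality of $A(\bf z)$ and $\normord{BC}\!(\bf w)$ then follows from part (1) together with the fact that the resulting commutator is manifestly a finite sum of $\partial^{(\bf m)}_{\bf w}\Delta(\bf z, \bf w)$ times fields.

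The main obstacle I anticipate is the careful sign bookkeeping in part (2): the regular and singular parts $B_\pm$ of a homogeneous field of cohomological degree $p(B)$ carry different degrees ($p(B)$ for $B_+$ and $p(B) - N$ for $B_-$), so moving $A(\bf z)$ past $C(\bf w) B(\bf w)_-$ versus $B(\bf w)_+ C(\bf w)$ produces different Koszul signs, and these must combine to exactly the stated $(-1)^{p(B)(p(A)+N)}$ in front of $\normord{BE^{\bf m}}$. A secondary subtlety is justifying the substitution $\bf x \mapsto \bf w$ inside expressions like $\partial^{(\bf m)}_{\bf x}\Delta(\bf z,\bf x)D^{\bf m}(\bf x) \cdot (\text{something in } \bf w)$: one must argue via Lemma \ref{lem:deltaidentities}(6) and the Leibniz rule that the $\bf x$-dependence is forced onto $\bf w$ correctly, including the Taylor-expansion correction terms $(\bf z - \bf w)^{\bf j}$, which vanish here only because the remaining factor is already a field in $\bf w$ alone. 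Part (1) is more mechanical and I expect it to go through by the standard trinomial-expansion argument without new difficulties, modulo tracking that the graded Jacobi identity introduces the correct signs $p(A,B)$, $p(B,C)$, which do not affect the pole-order count.
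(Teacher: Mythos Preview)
Your overall strategy for both parts matches the paper's: for (1), the paper likewise writes $(z_i-w_i)^{3r_i}=(z_i-w_i)^{r_i}\sum_{s=0}^{2r_i}\binom{2r_i}{s}(z_i-x_i)^s(x_i-w_i)^{2r_i-s}$ and uses the graded Jacobi identity to distribute factors onto the iterated brackets; for (2), the paper also expands $\normord{BC}=B_+C+p(B,C)CB_-$ and applies the graded Leibniz rule, then groups terms.

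There is, however, a real gap in your plan for part (2). You assert that after substituting the OPEs the coefficients ``reassemble into $\normord{D^{\bf m}C}$ and $\normord{BE^{\bf m}}$'' because ``$\Delta_-$-type terms only hit the `$-$' parts and $\Delta_+$-type the `$+$' parts.'' This heuristic is misleading: the $\Delta_\pm$ decomposition is a split in the $\bf z$-variable, while the $B_\pm,C_\pm,D^{\bf m}_\pm$ splits are in $\bf w$, and they do not align. The grouping that produces $\normord{BE^{\bf m}}$ does work cleanly, but when you combine $[A,B_+]C$ with $p(B,C)p(A,C)C_+[A,B_-]$ you actually obtain
\[
\sum_{\bf m}\partial^{(\bf m)}_{\bf w}\Delta(\bf z,\bf w)\normord{D^{\bf m}C}(\bf w)\;-\;\sum_{\bf m}\partial^{(\bf m)}_{\bf w}\Delta_+(\bf z,\bf w)\,[D^{\bf m}(\bf w)_+,C(\bf w)_+],
\]
and the leftover commutator does not vanish for free. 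This is precisely where part (1) enters: applying (1) with the roles of the three fields permuted yields $(z_i-w_i)^{3r_i}[C(\bf z),D^{\bf m}(\bf w)]=0$; restricting to the part regular in both $\bf z$ and $\bf w$ (a power series, where $(z_i-w_i)^{3r_i}$ is not a zero divisor) forces $[C(\bf z)_+,D^{\bf m}(\bf w)_+]=0$, and one then sets $\bf z=\bf w$. So part (1) is not invoked \emph{after} the formula to conclude locality---that would be redundant once \eqref{eq:normordbracket} is in hand---but \emph{inside} the derivation to kill this cross term. The obstacles you anticipated (Koszul signs and an $\bf x\to\bf w$ substitution) are routine by comparison; the nontrivial step is this one.
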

\begin{proof}
For statement (1), we may carry out a similar argument to the original proof for VAs \cite[Proposition 3.2.7]{LieSuperalgebras} by expanding 
\[(z_i-w_i)^{3r_i} = \sum_{s_i=0}^{2r_i}\binom{2r_i}{s_i}(z_i-x_i)^{s_i}(x_i-w_i)^{2r_i-s_i}(z_i-w_i)^{r_i}.\]
The original proof used a factor of 4 instead of 3; this is only necessary if one wants to consider a more general version of this statement for normal ordered products.
For statement (2) we expand to get
\begin{align*}
    &[A(\bf z),\normord{BC}\!(\bf w)]\\
    % &= [A(z) , B(w)_+C(w)]  + p(B,C)[A(z),C(w)_+B(w)_-]\\
    &= [A(\bf z),B(\bf w)_+]C(\bf w) + p(A,B)B(\bf w)_+[A(\bf z),C(\bf w)]\\
    &\qquad +p(B,C)[A(\bf z),C(\bf w)_+]B(\bf w)_- + p(B,C)p(A,C)C(\bf w)_+[A(\bf z),B(\bf w)_-].
\end{align*}
We expand two of the terms on the right hand side together:
\begin{align*}
    &p(A,B)B(\bf w)_+[A(\bf z),C(\bf w)] + p(B,C)[A(\bf z),C(\bf w)_+]B(\bf w)_-\\
    % &= p(A,B)\sum_{m\geq 0}\partial^{(m)}_w\Delta(\bf z,\bf w)\left((-1)^{p(B)N}B(w)_+E^m(w)+p(A,B)p(B,C)E^m(w)B(w)_-\right)\\
    &=(-1)^{p(B)(p(A)+N)}\sum_{m\geq 0}\partial^{(\bf m)}_{\bf w}\Delta(\bf z,\bf w) \normord{BE^{\bf m}}\!(\bf w).
\end{align*}
As for the second term, we have
\begin{align*}
    &[A(\bf z),B(\bf w)_+]C(\bf w) + p(B,C)p(A,C)C(\bf w)_+[A(\bf z),B(\bf w)_-]\\
    % &=\sum_{m\geq 0}\partial_w^{(m)}\Delta_-(\bf z,\bf w)D^m(w)_+C(w) \\
    % &\qquad + (-1)^{(p(A)+p(B))p(C)}C(w)_+\left(\partial_w^{(m)}\Delta_-(\bf z,\bf w)D^m(w)_- + \partial_w^{(m)}\Delta_+(\bf z,\bf w)D^m(w)_+\right)\\
    % &=\sum_{m\geq 0}\partial_w^{(m)}\Delta_-(\bf z,\bf w)D^m(w)_+C(w) \\
    % &\qquad + (-1)^{(p(A)+p(B)+N)p(C)}\partial_w^{(m)}\Delta_-(\bf z,\bf w)C(w)_+D^m(w)_-\\
    % &\qquad + (-1)^{(p(A)+p(B)+N)p(C)}\partial_w^{(m)}\Delta_+(\bf z,\bf w)C(w)_+D^m(w)_+\\
    &=\sum_{\bf m\geq \bf 0}\partial_{\bf w}^{(\bf m)}\Delta(\bf z,\bf w)\normord{D^{\bf m}C}(\bf w) - \partial^{(\bf m)}_{\bf w}\Delta_+(\bf z,\bf w)[D^{\bf m}(\bf w)_+,C(\bf w)_+].
\end{align*}
By the first statement of the lemma, we have that $[C(\bf z)_+,D^{\bf m}(\bf w)_+]=0$, and since these are power series we can set $\bf z=\bf w$. Thus the second term is 0, proving (2).
\end{proof}
Equation \ref{eq:normordbracket} will be our primary tool to compute brackets for examples of CVAs.

\section{Cohomological vertex algebras}\label{Sec3}
Here, we give the main definition of a CVA and state its various properties. The main result we will build up to is the Jacobi identity (Theorem \ref{thm:Jacobi}). Once again, we fix a pair of integers $N,n$ such that $n\in \Z_{\geq 1}$, and we recall that we use the shorthand $\bf z = (z_1,\ldots,z_n)$. We also write $\bf k\geq \bf j$ to mean $k_i\geq j_i$ for all $i=1,\ldots,n$.

\subsection{The main definitions}\label{subsec:CVA}
The key object in the definition of a CVA is the \textit{state-field correspondence} $Y$, which maps every element $a$ of a $\Z$-graded $R$-module $V$ to a field $Y(a,z)$. When we provide examples of cohomological vertex algebras in Section \ref{Sec5}, we will sometimes write $a(z)$ to mean $Y(a,z)$.
\begin{defn}\label{defn:CVA}
    Let $R$ be a commutative ring, and fix $N\in \Z$ and $n\in \Z_{\geq 1}$. An \textit{$N$-shifted cohomological vertex algebra in $n$ variables} (or just a \textit{$(n,N)$-CVA}) is the data $(V,Y,e^{\bf z\cdot \bf T},\mathds 1)$ consisting of the following:
    \begin{enumerate}
        \item (State space) A $\Z$-graded $R$-module $V = \bigoplus_{r\in \Z}V^r$.
        \item (Vertex operator map) A degree 0 $R$-linear map $Y \colon V\to \cal K^{\bf z}_{\mathrm{dist}}(\End(V))$:
        \[a\mapsto Y(a,\bf z) = \sum_{\bf j\geq \bf 0}\bf z^{\bf j}a_{(-\bf 1-\bf j)} + \Omega^{\bf j}_{\bf z}a_{(\bf j)}.\]
        That is, if $a\in V$ is homogeneous, then $Y(a,\bf z)$ is a homogeneous cohomological field of degree $p(a)$ in the sense of Definition \ref{defn:field}. That is, $a_{(\bf j)}$ is a degree $p(a)-N$ operator and $a_{(-\bf 1-\bf j)}$ is a degree $p(a)$ operator.
        \item (Translation operator) A collection $(T^{(\bf j)})_{\bf j\geq \bf 0}\subset \End(V)$ of degree 0 endomorphisms such that $T^{(\bf 0)} = \id_V$, which we write together in a power series as $e^{\bf z\cdot \bf T} = \sum_{\bf j\geq \bf 0}\bf z^{\bf j}T^{(\bf j)}\in \End(V)[\![\bf z]\!]$.
        \item (Vacuum) A distinguished degree 0 element $\bf 1\in V^0$.
    \end{enumerate}
    This data satisfies the following conditions for all $a,b\in V$:
    \begin{enumerate}[label = (V\arabic*)]
        \item For all $a\in V$ we have $Y(a,\bf z)\in \cal E_c(V)$ in the sense of Definition \ref{defn:field}. That is, for all $a,b\in V$, we have $Y(a,\bf z)b\in V\langle\!\langle \bf z\rangle\!\rangle$.
        \item $Y(\bf 1,\bf z) = \id_V$.
        \item $Y(a,\bf z)\mathds 1 = e^{\bf z\cdot \bf T}a$.
        \item We have
        \begin{equation}\label{eq:transcov}
            e^{\bf w\cdot \bf T}Y(a,\bf z)e^{-\bf w\cdot \bf T} = Y(a,\bf z+\bf w) = \sum_{\bf k\geq \bf 0}\bf w^{\bf k}\partial_{\bf z}^{(\bf k)}Y(a,\bf z).
        \end{equation}
        Here we will always write $Y(a,\bf z+\bf w)$ to mean $i_{\bf z,\bf w}Y(a,\bf z+\bf w)$ unless stated otherwise.
        \item For all homogeneous elements $a,b\in V$, the fields $Y(a,\bf z)$ and $Y(b,\bf w)$ are mutually local in the sense of Definition \ref{defn:locality}.
    \end{enumerate}
\end{defn}
Axioms V2, V3, and V4 mirror the usual vacuum/translation axioms for VAs.
We direct the reader to Proposition \ref{prop:Qalg} in case they wish to see a more familiar set of axioms when $R$ is a field of characteristic 0. 
We sometimes refer to axiom V4 as \textit{translation covariance} and axiom V5 as \textit{weak commutativity}.
We will leave the integers $(n,N)$ implicit and simply say `CVA' when clear in context.

One may give natural definitions of ideals, morphisms, subalgebras, tensor products, and direct sums similar to how it is given for VAs. We omit the definitions as they are identical to the constructions for vertex superalgebras.
% \begin{defn}
%     Let $W,I\leq V$ be $R$-submodules of $V$ invariant under $T^{(j)}$ for all $j\geq 0$. $W$ is said to be a \textit{cohomological vertex subalgebra} (or just a subalgebra) of $V$ if $Y(a,z)b\in W\langle\!\langle z\rangle\!\rangle$ whenever $a,b\in W$. $I$ is said to be a \textit{cohomological vertex algebra ideal} (or just an ideal) if $Y(a,z)b\in I\langle\!\langle z\rangle\!\rangle$ whenever $a\in I$ and $b\in V$.
% \end{defn}
% \begin{defn}
%     A \textit{morphism} of cohomological vertex algebras
%     \[\rho \colon (V_1,Y_1,e^{\bf z\cdot \bf T_1},\mathds 1_1) \to (V_2,Y_2,e^{\bf z\cdot \bf T_2},\mathds 1_2)\]
%     is an $R$-linear degree 0 map $\rho \colon V_1\to V_2$ such that for all $a,b\in V$ we have
%     \[\rho(\mathds 1_1) = \mathds 1_2,\qquad e^{\bf z\cdot \bf T_2}\rho(a) = \rho(e^{\bf z\cdot \bf T_1}a),\qquad \rho(Y_1(a,z)b) = Y_2(\rho(a),z)\rho(b).\]
%     We will often refer to such morphisms as just $\rho \colon V_1\to V_2$.
% \end{defn}
% The kernel of a morphism $\rho \colon V_1\to V_2$ is an ideal of $V_1$, and the image is a subalgebra of $V_2$. If $J\subseteq V$ is an ideal then the quotient $V/J$ has the natural structure of a cohomological vertex algebra since $Y(a,z)b\in J\langle\!\langle z\rangle\!\rangle$ implies $Y(b,z)a\in J\langle\!\langle z\rangle\!\rangle$ by the skew-symmetry identity (Proposition \ref{prop:skewsymmetry}).

% Given CVAs $V_1,\ldots,V_d$, one may define a natural CVA structure on $V_1\otimes \ldots\otimes V_d$ similar to that of the tensor product of VAs.
% The direct sum of CVAs can be defined similarly.

The first additional structure we consider on a CVA is a compatible grading on the underlying $R$-module, which we call a spin grading (following \cite{ravioli}). This is analogous to the usual grading that is considered for VAs.
\begin{defn}\label{defn:spin}
    Let $L\in \Z_{\geq 1}$. A \textit{spin grading} on a cohomological vertex algebra $V$ is an additional (non-cohomological) grading $V = \bigoplus_{\bf s\in (\tfrac{1}{L}\Z)^n}V^{(\bf s)}$ such that:
    \begin{enumerate}
        \item $\mathds 1\in V^{(\bf 0)}$.
        \item $Y$ is a spin degree 0 map.
        \item $T^{(\bf j)}$ is a spin degree $j$ map, that is $T^{(\bf j)}V^{(\bf k)}\subset V^{(\bf k+\bf j)}$ for all $\bf k$.
        \item For all $\bf j\geq \bf 0$ and $a\in V^{(\bf s)}$ the mode $a_{(\bf j)}$ is a spin degree $\bf s-\bf j-\bf 1$ map, and $a_{(-\bf 1-\bf j)}$ is a spin degree $\bf s+\bf j$ map.
    \end{enumerate}
    We denote the spin degree of an element or mode via $\deg$, and we denote the $i$th component of the spin degree via $\deg_i$.
    Furthermore, we say that an \textit{$\N^n$-grading} of $V$ is a spin grading for $L=1$ such that $V^{(\bf s)} = 0$ unless $\bf s\geq \bf 0$.
\end{defn}
Most of the gradings we will consider in this paper are $\N^n$-gradings. 
% Given an $\N^n$-grading $\deg$, we may also descend to an $\N$-grading $|\!\deg\!|$ given by
% \begin{equation}
% |\!\deg\!|(a) = |\deg (a)| = \sum_i\deg_i(a).
% \end{equation}
% Axiom V1 may then be replaced with $a_{(k)}b=0$ whenever $|k| > n(\max_i\{K_i\})$ since that implies $k_j > K_j$ for some $j$ (here $K=K_{a,b}$ from axiom V1).

\subsection{Basic properties of cohomological vertex algebras}
Let $V = (V,Y,e^{\bf z\cdot \bf T},\mathds 1)$ be a cohomological vertex algebra over a ring $R$. We prove some general properties of these objects. 
The first property we note is that the generator of translations $e^{\bf z\cdot \bf T}$ behaves as its notation suggests:
\begin{lem}\label{lem:exp}
    We have $e^{\bf z\cdot \bf T}e^{\bf w\cdot \bf T} = e^{\bf w\cdot \bf T}e^{\bf z\cdot \bf T} = e^{(\bf z+\bf w)\cdot \bf T}$.
\end{lem}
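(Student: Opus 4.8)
The plan is to prove the identity $e^{\bf z\cdot \bf T}e^{\bf w\cdot \bf T} = e^{(\bf z+\bf w)\cdot \bf T}$ by applying both sides to the vacuum $\mathds 1$ and using axiom V3 together with Goddard-type uniqueness, or alternatively by a direct manipulation of axiom V4. I would prefer the second, more self-contained route. First I would recall that axiom V4 applied to the vacuum gives, via V3, the identity $e^{\bf w\cdot \bf T}Y(a,\bf z)e^{-\bf w\cdot \bf T}\mathds 1 = Y(a,\bf z+\bf w)\mathds 1$; since $Y(a,\bf z)\mathds 1 = e^{\bf z\cdot \bf T}a$ and $e^{-\bf w\cdot\bf T}\mathds 1 = \mathds 1$ (which follows from V3 with $a = \mathds 1$ and V2, noting $Y(\mathds 1,\bf w)=\id_V$, so $e^{\bf w\cdot\bf T}\mathds 1 = Y(\mathds 1,\bf w)\mathds 1 = \mathds 1$, hence also $e^{-\bf w\cdot\bf T}\mathds 1=\mathds 1$), the left side becomes $e^{\bf w\cdot\bf T}e^{\bf z\cdot\bf T}a$. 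On the right side, $Y(a,\bf z+\bf w)\mathds 1$ should be read as $i_{\bf z,\bf w}$ applied to $e^{(\bf z+\bf w)\cdot\bf T}a$, which as a power series in $\bf z,\bf w$ is literally $\sum_{\bf k}(\bf z+\bf w)^{\bf k}T^{(\bf k)}a = e^{(\bf z+\bf w)\cdot\bf T}a$. Since $a\in V$ is arbitrary, this yields $e^{\bf w\cdot\bf T}e^{\bf z\cdot\bf T} = e^{(\bf z+\bf w)\cdot\bf T}$ as operators.

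For the commutativity $e^{\bf z\cdot\bf T}e^{\bf w\cdot\bf T} = e^{\bf w\cdot\bf T}e^{\bf z\cdot\bf T}$, I would observe that the same computation run with the roles of $\bf z$ and $\bf w$ interchanged (using axiom V4 in the form $e^{\bf z\cdot\bf T}Y(a,\bf w)e^{-\bf z\cdot\bf T}\mathds 1 = Y(a,\bf w+\bf z)\mathds 1$) gives $e^{\bf z\cdot\bf T}e^{\bf w\cdot\bf T} = e^{(\bf w+\bf z)\cdot\bf T}$, and $(\bf z+\bf w)^{\bf k} = (\bf w+\bf z)^{\bf k}$ as elements of $R[\![\bf z,\bf w]\!]$ since the polynomial ring is commutative. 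Hence both composites equal the same power series $\sum_{\bf k}(\bf z+\bf w)^{\bf k}T^{(\bf k)}$, which proves all three expressions coincide.

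The only subtle point — and the one I would be most careful about — is making sure the expansion convention is handled correctly: $Y(a,\bf z+\bf w)$ a priori means $i_{\bf z,\bf w}Y(a,\bf z+\bf w)$, and one must check that when this is applied to $\mathds 1$ (so that only the regular, power-series part survives), the result genuinely equals the honest power series $e^{(\bf z+\bf w)\cdot\bf T}a$ with no expansion ambiguity. Since $Y(a,\bf z)\mathds 1 = e^{\bf z\cdot\bf T}a\in V[\![\bf z]\!]$ has no singular $\Omega_{\bf z}^{\bf j}$ part, the substitution $\bf z\mapsto \bf z+\bf w$ is just the ordinary substitution into a power series and presents no difficulty; this is the observation that closes the argument. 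Everything else is a routine bookkeeping of axioms V2, V3, V4.
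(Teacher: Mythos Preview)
Your proof is correct and follows essentially the same route as the paper: apply V4 to the vacuum, use V2 and V3 to simplify both sides, and conclude $e^{\bf w\cdot\bf T}e^{\bf z\cdot\bf T}a = e^{(\bf z+\bf w)\cdot\bf T}a$ for all $a$. Your additional remarks on the expansion convention and on obtaining commutativity by swapping $\bf z$ and $\bf w$ are exactly the right justifications for the steps the paper leaves implicit.
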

\begin{proof}
    By V3 we have
    \[Y(a,\bf z+\bf w)\mathds 1 = e^{(\bf z+\bf w)\cdot \bf T}a.\]
    On the other hand, by V4 and V2 we have
    \[Y(a,\bf z+\bf w)\mathds 1 = e^{\bf w\cdot \bf T}Y(a,\bf z)e^{-\bf w\cdot \bf T}\mathds 1 = e^{\bf w\cdot \bf T}Y(a,\bf z)\mathds 1 = e^{\bf w\cdot \bf T}e^{\bf z\cdot \bf T}a.\]
\end{proof}
Setting $\bf w=-\bf z$ gives $e^{\bf z\cdot \bf T}e^{-\bf z\cdot \bf T} = \id_V$.
Reading off the coefficients of the above lemma, we get for all $\bf k,\boldsymbol\ell\geq \bf 0$
\begin{equation}\label{eq:transprod}
    T^{(\bf k)}T^{(\boldsymbol \ell)} = T^{(\boldsymbol \ell)}T^{(\bf k)} = \binom{\bf k+\boldsymbol\ell}{\bf k}T^{(\bf k+\boldsymbol\ell)}.
\end{equation}
\begin{cor}
    If $R$ is a $\Q$-algebra, then $T^{(\bf k)} = \frac{1}{\bf k!}\bf T^{\bf k}$ for $\bf T=(T_1,\ldots,T_n)$, where $T_i = T^{(\bf e_i)}$ for all $i\in [1,n]$.
\end{cor}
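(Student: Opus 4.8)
The plan is to extract everything from the coefficient identity \eqref{eq:transprod}, namely $T^{(\bf k)}T^{(\boldsymbol\ell)} = \binom{\bf k+\boldsymbol\ell}{\bf k}T^{(\bf k+\boldsymbol\ell)}$, together with $T^{(\bf 0)} = \id_V$. First I would record the one-variable case: fix $i\in[1,n]$ and prove by induction on $k$ that $T^{(k\bf e_i)} = \tfrac{1}{k!}T_i^k$. The base case $k=0$ is $T^{(\bf 0)}=\id_V$. For the inductive step, apply \eqref{eq:transprod} with $\bf k = \bf e_i$ and $\boldsymbol\ell = k\bf e_i$: since $\binom{\bf e_i + k\bf e_i}{\bf e_i} = \binom{k+1}{1}\prod_{j\neq i}\binom{0}{0} = k+1$, we get $T_i T^{(k\bf e_i)} = (k+1)\,T^{((k+1)\bf e_i)}$, so $T^{((k+1)\bf e_i)} = \tfrac{1}{k+1}T_i T^{(k\bf e_i)} = \tfrac{1}{(k+1)!}T_i^{k+1}$, where dividing by $k+1$ is legitimate precisely because $R$ is a $\Q$-algebra.

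Next I would handle the general multi-index. The key observation is that whenever $\bf k$ and $\boldsymbol\ell$ have disjoint supports, $\binom{\bf k+\boldsymbol\ell}{\bf k} = \prod_i\binom{k_i+\ell_i}{k_i} = 1$, so \eqref{eq:transprod} gives $T^{(\bf k)}T^{(\boldsymbol\ell)} = T^{(\bf k+\boldsymbol\ell)}$. Applying this repeatedly to the decomposition $\bf k = k_1\bf e_1 + \cdots + k_n\bf e_n$ (whose summands have pairwise disjoint supports) yields
\[
T^{(\bf k)} = T^{(k_1\bf e_1)}T^{(k_2\bf e_2)}\cdots T^{(k_n\bf e_n)}.
\]
In particular the $T_i = T^{(\bf e_i)}$ pairwise commute (take $\bf k=\bf e_i$, $\boldsymbol\ell = \bf e_j$ with $i\neq j$), so the monomial $\bf T^{\bf k} = \prod_{i=1}^n T_i^{k_i}$ is unambiguous. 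Substituting the one-variable formula from the previous paragraph into the displayed product gives
\[
T^{(\bf k)} = \prod_{i=1}^n \frac{1}{k_i!}\,T_i^{k_i} = \frac{1}{\bf k!}\,\bf T^{\bf k},
\]
which is the claim.

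There is no serious obstacle here; the argument is a bounded bookkeeping exercise with multi-indices. The only two points requiring a moment's care are that one must use the $\Q$-algebra hypothesis to invert the integers $k+1$ appearing in the induction, and that one should verify the binomial coefficients genuinely collapse to $1$ on disjoint supports so that the factorization $T^{(\bf k)} = \prod_i T^{(k_i\bf e_i)}$ holds on the nose (with no correction factor), which in turn is what makes the $T_i$ commute and legitimizes the notation $\bf T^{\bf k}$.
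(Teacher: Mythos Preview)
Your proof is correct and is exactly the argument the paper has in mind: the corollary is stated without proof immediately after \eqref{eq:transprod}, and your derivation simply unpacks that identity (one-variable induction, then factorization along disjoint supports) in the expected way.
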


The first major identity we provide is skew-symmetry. The proof here is similar to the proof in \cite[Proposition 3.2.2]{ravioli}, using axioms V3, V4, and V5:
\begin{prop}[Skew-symmetry]\label{prop:skewsymmetry}
    For all $a,b\in V$ homogeneous we have
    \begin{equation}
        Y(a,\bf z)b = p(a,b)e^{\bf z\cdot \bf T}Y(b,-\bf z)a.
    \end{equation}
\end{prop}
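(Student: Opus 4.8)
The plan is to adapt the classical vertex-algebra derivation of skew-symmetry (as in \cite[Proposition 3.2.2]{ravioli}): apply the two orderings of $Y(a,\bf z)Y(b,\bf w)$ to the vacuum, use the vacuum and translation axioms to turn each ordering into a one-sided iterate, invoke weak commutativity (V5) to identify the two expressions inside $\cal K^{\bf z,\bf w,\bf z-\bf w}(V)$, and finally specialize $\bf w\mapsto\bf 0$. The one genuinely new wrinkle, compared with vertex algebras, is that the specialization must be carried out while the singular data is still packaged in the forms $\Omega_{\bf z-\bf w}^{\bf j}$, because — by Lemma \ref{lem:deltaidentities}(5) — one \emph{cannot} first clear poles by multiplying the commutator by a large power of $\bf z-\bf w$.

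First I would run the two computations. Replacing $\bf w$ by $-\bf w$ in the translation-covariance axiom \eqref{eq:transcov} gives $Y(a,\bf z)e^{\bf w\cdot\bf T}=e^{\bf w\cdot\bf T}Y(a,\bf z-\bf w)$, where the right side denotes the $i_{\bf z,\bf w}$-expansion; combining this with V3 yields
\[Y(a,\bf z)Y(b,\bf w)\mathds 1 = Y(a,\bf z)e^{\bf w\cdot\bf T}b = e^{\bf w\cdot\bf T}Y(a,\bf z-\bf w)b,\]
which is $i_{\bf z,\bf w}$ applied to the abstract element $P(\bf z,\bf w)\eqdef e^{\bf w\cdot\bf T}Y(a,\bf z-\bf w)b\in\cal K^{\bf z,\bf w,\bf z-\bf w}(V)$ (its $\Omega_{\bf z-\bf w}$-part is a finite sum since $Y(a,\cdot)$ is a field). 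Symmetrically, $Y(b,\bf w)e^{\bf z\cdot\bf T}=e^{\bf z\cdot\bf T}Y(b,\bf w-\bf z)$ gives
\[p(a,b)\,Y(b,\bf w)Y(a,\bf z)\mathds 1 = p(a,b)\,e^{\bf z\cdot\bf T}Y(b,\bf w-\bf z)a,\]
which is the $i_{\bf w,\bf z}$-expansion of $Q(\bf z,\bf w)\eqdef p(a,b)\,e^{\bf z\cdot\bf T}Y(b,\bf w-\bf z)a$; rewriting $(\bf w-\bf z)^{\bf j}=(-1)^{|\bf j|}(\bf z-\bf w)^{\bf j}$ and $\Omega_{\bf w-\bf z}^{\bf j}=(-1)^{|\bf j|+N+1}\Omega_{\bf z-\bf w}^{\bf j}$ via \eqref{eq:negativemodedef} shows that both $P$ and $Q$ lie in the $R$-submodule $A\subset\cal K^{\bf z,\bf w,\bf z-\bf w}(V)$ spanned by $V[\![\bf z,\bf w]\!]$ and the forms $\Omega_{\bf z-\bf w}^{\bf j}$ (no $\Omega_{\bf z}$, no $\Omega_{\bf w}$). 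Now V5 says $Y(a,\bf z)$ and $Y(b,\bf w)$ are mutually local, so by Proposition \ref{prop:localanalytic} with $v=\mathds 1$ the two displayed series are the $i_{\bf z,\bf w}$- and $i_{\bf w,\bf z}$-expansions of one common element of $\cal K^{\bf z,\bf w,\bf z-\bf w}(V)$; since $i_{\bf z,\bf w}$ and $i_{\bf w,\bf z}$ are injective (Lemma \ref{lem:multivarembed}), this forces $P(\bf z,\bf w)=Q(\bf z,\bf w)$ in $\cal K^{\bf z,\bf w,\bf z-\bf w}(V)$.

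Finally I would specialize $\bf w\mapsto\bf 0$. On the submodule $A$ there is a well-defined $R$-linear map to $\cal K^{\bf z}(V)$ given by $g(\bf z,\bf w)\mapsto g(\bf z,\bf 0)$ on the regular part and $\Omega_{\bf z-\bf w}^{\bf j}\mapsto\Omega_{\bf z}^{\bf j}$ on the singular part; this is compatible with the relations defining $\cal K^{\bf z,\bf w,\bf z-\bf w}$ restricted to $A$ (it sends $(\bf z-\bf w)^{\bf i}\Omega_{\bf z-\bf w}^{\bf j}=\Omega_{\bf z-\bf w}^{\bf j-\bf i}$ to $\bf z^{\bf i}\Omega_{\bf z}^{\bf j}=\Omega_{\bf z}^{\bf j-\bf i}$, and commutes with $\partial_{\bf z}$). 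Under it, $P$ becomes $Y(a,\bf z)b$ (the factor $e^{\bf w\cdot\bf T}$ collapses to $\id_V$), while $Q$ becomes $p(a,b)\,e^{\bf z\cdot\bf T}Y(b,-\bf z)a$, where one uses $\Omega_{-\bf z}^{\bf j}=(-1)^{|\bf j|+N+1}\Omega_{\bf z}^{\bf j}$ once more to recognize $\sum_{\bf j}\bigl((-\bf z)^{\bf j}b_{(-\bf 1-\bf j)}a+\Omega_{-\bf z}^{\bf j}b_{(\bf j)}a\bigr)$ as $Y(b,-\bf z)a$. Equating the images gives the claim. The main obstacle is precisely this last step: for ordinary vertex algebras one multiplies by a high power of $\bf z-\bf w$, kills the poles, substitutes $\bf w=\bf 0$, and divides back; here $(\bf z-\bf w)^{\bf L}\partial_{\bf w}^{(\bf k)}\Delta_\pm(\bf z,\bf w)=0$ for $\bf L$ large, so that procedure would annihilate exactly the singular modes $a_{(\bf j)}b$ and $b_{(\bf j)}a$ that skew-symmetry relates — which is why one must keep the singular part encoded by $\Omega_{\bf z-\bf w}^{\bf j}$ throughout and specialize only at the very end. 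Everything else is routine manipulation with \eqref{eq:transcov}, \eqref{eq:negativemodedef}, and the embeddings of Lemma \ref{lem:multivarembed}.
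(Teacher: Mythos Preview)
Your argument is correct and takes a genuinely different route from the paper. The paper applies the vacuum directly to the local expansion $[Y(a,\bf z),Y(b,\bf w)]=\sum_{\bf j}\partial^{(\bf j)}_{\bf w}\Delta(\bf z,\bf w)C^{\bf j}(\bf w)$, then splits the resulting identity into four pieces according to the regular/singular decomposition in each of $\bf z$ and $\bf w$, and extracts the two halves of skew-symmetry separately: setting $\bf w=\bf 0$ in the power-series piece gives the regular half, while reading off $\bf w^{\bf 0}$-coefficients in the $\Delta_\pm$-pieces (after sign manipulations via Lemma~\ref{lem:deltaidentities}(3)(4)) gives the singular half, passing through the intermediate identification $a_{(\bf k)}b=C^{\bf k}_{(-\bf 1)}\mathds 1$. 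You instead package both orderings as abstract elements $P,Q$ of $\cal K^{\bf z,\bf w,\bf z-\bf w}(V)$, invoke Proposition~\ref{prop:localanalytic} together with the injectivity from Lemma~\ref{lem:multivarembed} to conclude $P=Q$, and then apply a single evaluation map $\bf w\mapsto\bf 0$, $\Omega_{\bf z-\bf w}^{\bf j}\mapsto\Omega_{\bf z}^{\bf j}$ on the submodule $A$. What the paper's approach buys is self-containment: it works purely with the $\Delta$-expansion form of V5 and needs neither Proposition~\ref{prop:localanalytic} nor the embeddings of Lemma~\ref{lem:multivarembed}. What your approach buys is economy: the regular and singular parts are handled in one stroke, the sign bookkeeping is absorbed into the single relation~\eqref{eq:negativemodedef}, and the argument visibly parallels the classical vertex-algebra proof.
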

\begin{proof}
    By V5, let $C^{\bf j}(\bf w) = Y(a,\bf w)_{(\bf j)}Y(b,\bf w)$ be fields such that
    \[[Y(a,\bf z),Y(b,\bf w)] = \sum_{\bf j\geq \bf 0}\partial^{(\bf j)}_{\bf w}\Delta(\bf z,\bf w)C^{\bf j}(\bf w).\]
    Applying the vacuum, we get by V3 and V4
    \begin{align*}
        [Y(a,\bf z),Y(b,\bf w)]\mathds 1 &= Y(a,\bf z)e^{\bf w\cdot \bf T}b - p(a,b)Y(b,\bf w)e^{\bf z\cdot \bf T}a\\
        &= Y(a,\bf z)e^{\bf w\cdot \bf T}b - p(a,b)e^{\bf z\cdot \bf T}Y(b,\bf w-\bf z)a.
    \end{align*}
    Splitting the above equation into parts implies the following four identities:
    \begin{align*}
        Y(a,\bf z)_+e^{\bf w\cdot \bf T}b &= p(a,b)e^{\bf z\cdot \bf T}Y(b,\bf w-\bf z)_+a,\\
        Y(a,\bf z)_-e^{\bf w\cdot \bf T}b &= \sum_{\bf j\geq \bf 0}\partial^{(\bf j)}_{\bf w}\Delta_-(\bf z,\bf w)C^{\bf j}(\bf w)_+\mathds 1,\\
        p(a,b)e^{\bf z\cdot \bf T}Y(b,\bf w-\bf z)_-a &= -\sum_{\bf j\geq \bf 0}\partial^{(\bf j)}_{\bf w}\Delta_+(\bf z,\bf w)C^{\bf j}(\bf w)_+\mathds 1,\\
        0&= \sum_{\bf j\geq \bf 0}\partial^{(\bf j)}_{\bf w}\Delta_-(\bf z,\bf w)C^{\bf j}(\bf w)_-\mathds 1.
    \end{align*}
    Set $\bf w=\bf 0$ in the first identity to get $Y(a,\bf z)_+b = p(a,b)e^{\bf z\cdot \bf T}Y(b,-\bf z)_+a$, which is one half of the skew-symmetry identity.

    Taking $\bf w=\bf 0$ in the second identity gives us for all $\bf k\geq \bf 0$
    \[a_{(\bf k)}b = C_{(-\bf 1)}^{\bf k}\mathds 1,\]
    where $C_{(-\bf 1)}^{\bf k}$ is the coefficient in front of $\Omega^{\bf 0}_{\bf w}$ in $C^{\bf k}(\bf w)$.
    Recall that $\partial^{(\bf k)}_{\bf z}\Delta_-(\bf w,\bf z) = (-1)^{|\bf k|+N}\partial^{(\bf k)}_{\bf w}\Delta_+(\bf z,\bf w)$ by properties (3) and (4) from Lemma \ref{lem:deltaidentities}, so substituting this into the series $Y(b,\bf w-\bf z)_-$ gives us
    \begin{align*}
        &p(a,b)e^{\bf z\cdot \bf T}Y(b,\bf w-\bf z)_-a \\
        % &= p(a,b)\sum_{m,\ell\geq 0}z^\ell (-1)^{|m|+N}\partial_w^{(m)}\Delta_+(\bf z,\bf w)T^{(\ell)} b_{(m)}a\\
        &= p(a,b)\sum_{\bf j,\bf m,\boldsymbol\ell\geq \bf 0}\binom{\boldsymbol\ell}{\bf j}\bf w^{\bf j}(\bf z-\bf w)^{\boldsymbol\ell-\bf j} (-1)^{|\bf m|+N}\partial_{\bf w}^{(\bf m)}\Delta_+(\bf z,\bf w)T^{(\boldsymbol\ell)} b_{(\bf m)}a\\
        % &= p(a,b)\sum_{j,m,\ell\geq 0}\binom{\ell}{j}w^j (-1)^{|m|+N}\partial_w^{(m-\ell+j)}\Delta_+(\bf z,\bf w)T^{(\ell)} b_{(m)}a\\
        &= p(a,b)\sum_{\bf j,\bf k,\boldsymbol\ell\geq \bf 0}\binom{\ell}{\bf j}\bf w^{\bf j} (-1)^{|\bf k|+|\boldsymbol\ell|+N}\partial_{\bf w}^{(\bf k+\bf j)}\Delta_+(\bf z,\bf w)T^{(\boldsymbol\ell)} b_{(\bf k+\boldsymbol\ell)}a.
    \end{align*}
    Now look at the $\bf w^{\bf 0}$ term above, which we can recover on both sides by taking residues. We have for all $\bf k\geq \bf 0$
    \[p(a,b)\sum_{\boldsymbol\ell\geq \bf 0} (-1)^{|\bf k|+|\boldsymbol\ell|+N+1}T^{(\boldsymbol\ell)} b_{(\bf k+\boldsymbol\ell)}a = C^{\bf k}_{(-\bf 1)}\mathds 1 = a_{(\bf k)}b.\]
    Equivalently, under the convention $\Omega^{\bf m}_{-\bf z} = (-1)^{|\bf m|+N+1}\Omega^{\bf m}_{\bf z}$,
    \[p(a,b)e^{\bf z\cdot \bf T}Y(b,-\bf z)_-a = Y(a,\bf z)_-b,\]
    which finishes the proof.
\end{proof}

\begin{cor}\label{cor:trans}
    For all $a\in V$ we have $Y(e^{\bf w\cdot \bf T}a,\bf z) = Y(a,\bf z+\bf w)$.
\end{cor}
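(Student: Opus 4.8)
The plan is to deduce the statement from skew-symmetry (Proposition~\ref{prop:skewsymmetry}) together with translation covariance (axiom V4) and the exponential law $e^{\bf z\cdot\bf T}e^{\bf w\cdot\bf T}=e^{(\bf z+\bf w)\cdot\bf T}$ (Lemma~\ref{lem:exp}). Since $Y$ and $e^{\bf w\cdot\bf T}$ are $R$-linear it suffices to treat homogeneous $a$, and since each $T^{(\bf j)}$ has cohomological degree $0$, the element $e^{\bf w\cdot\bf T}a$ is homogeneous of the same degree as $a$; in particular $p(e^{\bf w\cdot\bf T}a,b)=p(a,b)$ for every homogeneous $b\in V$. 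The idea is to apply both sides of the claimed identity to an arbitrary $b$ and rewrite each via skew-symmetry.

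First, applying skew-symmetry to $e^{\bf w\cdot\bf T}a$ (working coefficient-wise in $\bf w$) gives
$$Y(e^{\bf w\cdot\bf T}a,\bf z)b = p(a,b)\,e^{\bf z\cdot\bf T}\,Y(b,-\bf z)\,e^{\bf w\cdot\bf T}a.$$
On the other hand, skew-symmetry applied to $a$, followed by the substitution $\bf z\mapsto\bf z+\bf w$ expanded via $i_{\bf z,\bf w}$ (and using the convention $\Omega^{\bf m}_{-\bf z}=(-1)^{|\bf m|+N+1}\Omega^{\bf m}_{\bf z}$ to interpret $Y(b,-(\bf z+\bf w))$), yields
$$Y(a,\bf z+\bf w)b = p(a,b)\,e^{(\bf z+\bf w)\cdot\bf T}\,Y\big(b,-(\bf z+\bf w)\big)\,a.$$
Thus it remains to identify the operators $e^{\bf z\cdot\bf T}Y(b,-\bf z)e^{\bf w\cdot\bf T}$ and $e^{(\bf z+\bf w)\cdot\bf T}Y(b,-(\bf z+\bf w))$. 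By Lemma~\ref{lem:exp}, $e^{(\bf z+\bf w)\cdot\bf T}=e^{\bf z\cdot\bf T}e^{\bf w\cdot\bf T}$, and by axiom V4 applied to the (substituted) argument $-\bf z-\bf w$ we have $e^{\bf w\cdot\bf T}Y(b,-\bf z-\bf w)e^{-\bf w\cdot\bf T}=Y(b,-\bf z)$, since $-\bf z-\bf w+\bf w=-\bf z$. Hence
$$e^{(\bf z+\bf w)\cdot\bf T}Y\big(b,-(\bf z+\bf w)\big) = e^{\bf z\cdot\bf T}\Big(e^{\bf w\cdot\bf T}Y(b,-\bf z-\bf w)e^{-\bf w\cdot\bf T}\Big)e^{\bf w\cdot\bf T} = e^{\bf z\cdot\bf T}Y(b,-\bf z)e^{\bf w\cdot\bf T}.$$
Combining the three displays gives $Y(e^{\bf w\cdot\bf T}a,\bf z)b=Y(a,\bf z+\bf w)b$ for all $b\in V$, hence the asserted operator identity.

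The only delicate point — the step to handle with care rather than the step that is genuinely hard — is the bookkeeping of formal expansions: one must check that applying skew-symmetry \emph{after} the substitution $\bf z\mapsto\bf z+\bf w$ is legitimate, i.e.\ that the $i_{\bf z,\bf w}$-expansion of the right-hand side of skew-symmetry matches term by term the expansion implicit in $Y(a,\bf z+\bf w)=i_{\bf z,\bf w}Y(a,\bf z+\bf w)$, and that $Y(b,-(\bf z+\bf w))$ is read consistently with the $\Omega^{\bf m}_{-\bf z}$ convention and with the use of V4 at argument $-\bf z-\bf w$. Once the expansion conventions are aligned the argument is purely formal; no estimates or identities beyond Lemma~\ref{lem:exp}, Proposition~\ref{prop:skewsymmetry}, and axiom V4 are required.
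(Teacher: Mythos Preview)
Your proof is correct and follows essentially the same approach as the paper: both arguments apply skew-symmetry, axiom V4, and Lemma~\ref{lem:exp} in combination. The only cosmetic difference is that you expand both sides via skew-symmetry and meet in the middle, whereas the paper runs the chain linearly from $Y(e^{\bf w\cdot\bf T}a,\bf z)b$ through to $Y(a,\bf z+\bf w)b$; the underlying manipulations are identical.
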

\begin{proof}
%    We can use Goddard uniqueness. The $w^k$ component of the above identity is $Y(T^{(\bf k)}a,z) = \partial^{(k)}_zY(a,z)$, and to see this we note that
%    \[Y(T^{(\bf k)}a,z)\mathds 1 = e^{\bf z\cdot \bf T}T^{(\bf k)}a = T^{(\bf k)}e^{\bf z\cdot \bf T}a = \partial^{(k)}_zY(a,z)\mathds 1.\]
     We use skew-symmetry, V4, and Lemma \ref{lem:exp}:
     \begin{align*}
         Y(e^{\bf w\cdot \bf T}a,\bf z)b &= p(a,b)e^{\bf z\cdot \bf T}Y(b,-\bf z)e^{\bf w\cdot \bf T}a\\
         &= p(a,b)e^{\bf z\cdot \bf T}e^{\bf w\cdot \bf T}Y(b,-\bf z-\bf w)a = Y(a,\bf z+\bf w)b.
     \end{align*}
\end{proof}
Reading off both sides of this equation, we have for all $\bf k,\boldsymbol\ell\geq \bf 0$
\begin{equation}
(T^{(\bf k)}a)_{(-\bf 1-\boldsymbol\ell)} = \binom{\boldsymbol\ell+\bf k}{\bf k}a_{(-1-\boldsymbol\ell-\bf k)},\qquad (T^{(\bf k)}a)_{(\boldsymbol\ell)} = (-1)^{|\bf k|}\binom{\boldsymbol\ell}{\bf k}a_{(\boldsymbol\ell-\bf k)}.
\end{equation}

Now we prove an analog of Goddard uniqueness.
\begin{prop}[Goddard uniqueness]\label{prop:Goddard}
    Let $A(\bf z)$ be a homogeneous field on $V$ that is mutually local with $Y(b,w)$ for all $b\in V$. If there exists a homogeneous $a\in V$ such that
    \[A(\bf z)\mathds 1 = Y(a,\bf z)\mathds 1,\]
    then $A(\bf z) = Y(a,\bf z)$.
\end{prop}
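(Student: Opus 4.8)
The plan is to mimic the classical proof of Goddard uniqueness for vertex algebras, using mutual locality of $A(\bf z)$ with $Y(b,\bf w)$ together with skew-symmetry and the vacuum axioms. First I would fix an arbitrary homogeneous $b\in V$ and consider the commutator $[A(\bf z), Y(b,\bf w)]$. By hypothesis $A(\bf z)$ and $Y(b,\bf w)$ are mutually local, so by Definition \ref{defn:locality} there is some $\bf K\geq \bf 0$ and fields $C^{\bf j}(\bf w)$ with
\[
[A(\bf z), Y(b,\bf w)] = \sum_{\bf 0\leq \bf j\leq \bf K}\partial^{(\bf j)}_{\bf w}\Delta(\bf z,\bf w)\,C^{\bf j}(\bf w).
\]
Applying this identity to the vacuum $\mathds 1$ and using V2 (so that $Y(b,\bf w)$ acting after $A(\bf z)$ produces $Y(b,\bf w)\mathds 1 = e^{\bf w\cdot\bf T}b$) together with the hypothesis $A(\bf z)\mathds 1 = Y(a,\bf z)\mathds 1 = e^{\bf z\cdot\bf T}a$, I would get
\[
A(\bf z)\,e^{\bf w\cdot\bf T}b \;-\; p(A,b)\,Y(b,\bf w)\,e^{\bf z\cdot\bf T}a \;=\; \sum_{\bf 0\leq \bf j\leq \bf K}\partial^{(\bf j)}_{\bf w}\Delta(\bf z,\bf w)\,C^{\bf j}(\bf w)\mathds 1.
\]

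Next I would extract the regular-in-$\bf z$ part of this identity, i.e. the piece living in $\cal K^{\bf z,\bf w}(V)$ (no $\Omega_{\bf z}$ forms). On the right-hand side, $\partial^{(\bf j)}_{\bf w}\Delta(\bf z,\bf w)$ decomposes as $\partial^{(\bf j)}_{\bf w}\Delta_-(\bf z,\bf w) + \partial^{(\bf j)}_{\bf w}\Delta_+(\bf z,\bf w)$, and only the $\Delta_+$ piece is regular in $\bf z$ (it involves $\Omega_{\bf w}$); the $\Delta_-$ piece is singular in $\bf z$. On the left, $A(\bf z)_+ e^{\bf w\cdot\bf T}b$ is the $\bf z$-regular part of $A(\bf z)e^{\bf w\cdot\bf T}b$, while $p(A,b)Y(b,\bf w)e^{\bf z\cdot\bf T}a$ is already regular in $\bf z$ (it is a genuine power series in $\bf z$ composed with $Y(b,\bf w)$, since $e^{\bf z\cdot\bf T}a\in V[\![\bf z]\!]\otimes V$ and $Y(b,\bf w)$ acts coefficient-wise). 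Setting $\bf w = \bf 0$ then kills all the $\Omega_{\bf w}$-terms on the right (since $\partial^{(\bf j)}_{\bf w}\Delta_+(\bf z,\bf w)|_{\bf w=\bf 0}$ still carries $\Omega_{\bf w}$), leaving
\[
A(\bf z)_+\, b \;=\; p(A,b)\,Y(b,\bf 0)\,e^{\bf z\cdot\bf T}a.
\]
Since $b$ was arbitrary, taking $b = \mathds 1$ and using V2 recovers $A(\bf z)_+\mathds 1 = e^{\bf z\cdot\bf T}a$; but more usefully, for general $b$ the right-hand side is, by skew-symmetry (Proposition \ref{prop:skewsymmetry}) applied in the form $p(a,b)e^{\bf z\cdot\bf T}Y(b,-\bf z)a = Y(a,\bf z)b$ — wait, here we have $Y(b,\bf 0)$ rather than $Y(b,-\bf z)$, so instead I compare directly with the analogous identity obtained from $Y(a,\bf z)$ in place of $A(\bf z)$: the exact same computation with $A(\bf z)$ replaced by $Y(a,\bf z)$ (which is mutually local with all $Y(b,\bf w)$ by V5, and satisfies $Y(a,\bf z)\mathds 1 = e^{\bf z\cdot\bf T}a$ by V3) gives $Y(a,\bf z)_+ b = p(a,b)Y(b,\bf 0)e^{\bf z\cdot\bf T}a$, the same right-hand side. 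Hence $A(\bf z)_+ b = Y(a,\bf z)_+ b$ for all $b$, so $A(\bf z)_+ = Y(a,\bf z)_+$.

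It remains to handle the singular parts $A(\bf z)_-$ and $Y(a,\bf z)_-$. For this I would return to the displayed vacuum identity and extract the $\bf z$-singular part (the $\Omega_{\bf z}$-coefficients), which by Proposition \ref{prop:localanalytic} / Lemma \ref{lem:deltaidentities}(3)–(4) matches up the $\Delta_-(\bf z,\bf w)$ terms; setting $\bf w = \bf 0$ and reading off coefficients shows that $A(\bf z)_- b$ is determined by the $C^{\bf j}$, which in turn — running the same argument verbatim with $Y(a,\bf z)$ — must equal $Y(a,\bf z)_- b$. Concretely, the argument in the proof of Proposition \ref{prop:skewsymmetry} already shows that for a field with $A(\bf z)\mathds 1 = e^{\bf z\cdot\bf T}a$ the singular modes satisfy $A(\bf z)_- b = p(a,b)e^{\bf z\cdot\bf T}Y(b,-\bf z)_- a$, an expression depending only on $a$ and $b$; applying this to both $A(\bf z)$ and $Y(a,\bf z)$ yields $A(\bf z)_- = Y(a,\bf z)_-$. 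Combining the two pieces gives $A(\bf z) = Y(a,\bf z)$. The main obstacle I anticipate is the bookkeeping in separating the $\bf z$-regular and $\bf z$-singular components of the commutator identity correctly — in particular being careful that $\partial^{(\bf j)}_{\bf w}\Delta_\pm(\bf z,\bf w)$ land in the expected summands of $\cal K^{\bf z,\bf w}_{\mathrm{dist}}(V)$ and that setting $\bf w=\bf 0$ is legitimate (it is, since all coefficient sums are finite once applied to a fixed $b$) — rather than anything conceptually deep; the skew-symmetry reduction then makes the comparison of $A$ with $Y(a,\bf z)$ immediate.
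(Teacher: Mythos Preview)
Your approach differs from the paper's. The paper handles the regular part by the classical Goddard maneuver: multiply by $(z_i-w_i)^{K_i}$ to kill the commutator, compare $(z_i-w_i)^{K_i}A(\bf z)_+Y(b,\bf w)_+\mathds 1$ with $(z_i-w_i)^{K_i}Y(a,\bf z)_+Y(b,\bf w)_+\mathds 1$, set $\bf w=\bf 0$, then cancel $z_i^{K_i}$. For the singular part the paper works with the difference field: it expands $[A(\bf z)-Y(a,\bf z),Y(b,\bf w)]$ locally, applies the vacuum, and observes that since the left side $(A(\bf z)_--Y(a,\bf z)_-)e^{\bf w\cdot\bf T}b$ carries no $\Omega_{\bf w}$ factors, the local expansion on the right must vanish identically. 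Your route --- rerunning the proof of skew-symmetry with $A(\bf z)$ in place of $Y(a,\bf z)$ to obtain $A(\bf z)b = p(a,b)e^{\bf z\cdot\bf T}Y(b,-\bf z)a$ (noting that that proof uses only locality, V3 for $Y(b,-)$, V4 for $Y(b,-)$, and the hypothesis $A(\bf z)\mathds 1=e^{\bf z\cdot\bf T}a$), then comparing with actual skew-symmetry --- is a legitimate alternative and arguably cleaner, since it treats the regular and singular parts uniformly.

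There is, however, a gap in your execution of the regular part. After taking only the $\bf z$-regular projection you have
\[
A(\bf z)_+e^{\bf w\cdot\bf T}b - p(A,b)Y(b,\bf w)e^{\bf z\cdot\bf T}a \;=\; \sum_{\bf j}\partial^{(\bf j)}_{\bf w}\Delta_+(\bf z,\bf w)\,C^{\bf j}(\bf w)_+\mathds 1,
\]
and you then ``set $\bf w = \bf 0$''. But both sides still contain $\Omega_{\bf w}$ terms --- the left through $Y(b,\bf w)_-$, the right through $\Delta_+$ --- and there is no evaluation of the formal symbol $\Omega_{\bf w}^{\bf k}$ at $\bf w = \bf 0$. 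You must \emph{also} project onto the $\bf w$-regular part first; then the right side vanishes (since every term of $\Delta_+$ carries an $\Omega_{\bf w}$) and the left becomes the honest power series $A(\bf z)_+e^{\bf w\cdot\bf T}b - p(A,b)Y(b,\bf w)_+e^{\bf z\cdot\bf T}a$, which can legitimately be evaluated at $\bf w = \bf 0$. Better still, drop this separate computation and simply invoke your singular-part strategy for the regular part too: the transferred skew-symmetry argument already yields $A(\bf z)_+b = p(a,b)e^{\bf z\cdot\bf T}Y(b,-\bf z)_+a$ directly.
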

\begin{proof}
    The equality $a_{(-\bf 1-\bf k)}\mathds 1 = A_{(-\bf 1-\bf k)}\mathds 1$ implies that $A_{(-\bf 1-\bf k)}$ is homogeneous of degree $p(A) = p(a)$ for all $\bf k\geq \bf 0$.
    By locality, there exists a $\bf K\geq \bf 0$ such that $(z_i-w_i)^{K_i}[A(\bf z),Y(b,\bf w)]\mathds 1=0$ and $(z_i-w_i)^{K_i}[Y(a,\bf z),Y(b,\bf w)]\mathds 1=0$ for all $i$. Taking nonsingular parts, we have
    \begin{align*}
        (z_i-w_i)^{K_i}A(\bf z)_+Y(b,\bf w)_+\mathds 1 &= p(A,b)(z_i-w_i)^{K_i}Y(b,\bf w)A(\bf z)_+\mathds 1\\ 
        &= p(A,b)(z_i-w_i)^{K_i}Y(b,\bf w)_+Y(a,\bf z)_+\mathds 1 \\
        &= (z_i-w_i)^{K_i}Y(a,\bf z)_+Y(b,\bf w)_+\mathds 1.
    \end{align*}
    Now set $w=0$ on both sides to get $z_i^{K_i}A(\bf z)_+b = z_i^{K_i}Y(a,\bf z)_+b$, which implies that $A(\bf z)_+b = Y(a,\bf z)_+b$ for each $b$, hence $A(\bf z)_+ = Y(a,\bf z)_+$.

    On the other hand, by locality we have an expansion
    \[[A(\bf z)-Y(a,\bf z),Y(b,\bf w)] = \sum_{\bf m\geq \bf 0}\partial^{(\bf m)}_{\bf w}\Delta(\bf z,\bf w)C^{\bf m}(\bf w).\]
    Applying the vacuum and using the fact that $A(z)_+ = Y(a,z)_+$, we get
    \[(A(\bf z)_- -Y(a,\bf z)_-)Y(b,\bf w)\mathds 1 = \sum_{\bf m\geq \bf 0}\partial^{(\bf m)}_{\bf w}\Delta(\bf z,\bf w)C^{\bf m}(\bf w)\mathds 1.\]
    The left-hand side does not contain any terms with a $\Omega_{\bf w}^{\bf m}$ factor, so the right-hand side must be 0. Thus we have
    \[(A(\bf z)_- -Y(a,\bf z)_-)Y(b,\bf w)\mathds 1 = 0.\]
    Setting $\bf w=\bf 0$ gives us $A(\bf z)_-b = Y(b,\bf w)_-b$ for all $b\in V$.
\end{proof}
We will only need this result when proving an analog of the reconstruction theorem (Theorem \ref{thm:reconstruction}).

Now we consider different associativity relations for CVAs. For these identities, we need to make sense of the composition $Y(Y(a,\bf z)b,\bf w)$ for $a,b\in V$. This expression is a slight abuse of notation since $Y$ as a map takes an element of $V$ as input, not an element of $V\langle\!\langle z\rangle\!\rangle$, which is what $Y(a,\bf z)b$ is. In the theory of VAs, the discussion is straightforward due to a lack of cohomological signs, but here it is a bit more subtle.
\begin{defn}
    For $a,b\in V$ we define the composition $Y(Y(a,\bf z)b,\bf w)$ as follows:
    \begin{equation}
        Y(Y(a,\bf z)b,\bf w) \eqdef \sum_{\bf k\geq \bf 0}\bf z^{\bf k}Y(a_{(-\bf 1-\bf k)}b,\bf w) + \Omega_{\bf z}^{\bf k}Y(a_{(\bf k)}b,\bf w).
    \end{equation}
\end{defn}
This definition agrees with skew-symmetry: \[Y(Y(a,\bf z)b,\bf w)c = p(b,c)p(a,c)e^{\bf w\cdot \bf T}Y(c,-\bf w)Y(a,\bf z)b.\]
We will see this definition employed when formulating versions of weak associativity and the iterate and Jacobi identities.
\begin{prop}[Weak associativity]\label{prop:weakassoc}
    For all $a,c\in V$, the series
    \begin{equation}
        C(\bf z,\bf w) \eqdef Y(a,\bf z-\bf w)Y(b,-\bf w)c - Y(Y(a,\bf z)b,-\bf w)c
    \end{equation}
    is local for the pair of variables $(\bf z,\bf w)$ with the number of terms in the local expansion uniformly bounded for all $\bf b\in V$.
\end{prop}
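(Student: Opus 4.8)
The plan is to collapse $C(\bf z,\bf w)$ into a single graded commutator of fields applied to a state, and then invoke weak commutativity (axiom V5); this is the cohomological version of the standard way one deduces weak associativity from skew-symmetry and locality in the theory of vertex algebras. The only ingredients are skew-symmetry (Proposition \ref{prop:skewsymmetry}), translation covariance (axiom V4), the exponential law $e^{\bf w\cdot\bf T}e^{-\bf w\cdot\bf T}=\id_V$ from Lemma \ref{lem:exp}, and the identity $Y(Y(a,\bf z)b,\bf w)c=p(a,c)p(b,c)\,e^{\bf w\cdot\bf T}Y(c,-\bf w)Y(a,\bf z)b$ recorded just after the definition of the composition $Y(Y(a,\bf z)b,\bf w)$.

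First I would push $c$ to the innermost position in both terms of $C$. For the first term, axiom V4 with $\bf w\mapsto-\bf w$ gives the operator identity $Y(a,\bf z-\bf w)=e^{-\bf w\cdot\bf T}Y(a,\bf z)e^{\bf w\cdot\bf T}$, while skew-symmetry together with $e^{\bf w\cdot\bf T}e^{-\bf w\cdot\bf T}=\id_V$ gives $e^{\bf w\cdot\bf T}\big(Y(b,-\bf w)c\big)=p(b,c)\,Y(c,\bf w)b$; applying the first identity to $Y(b,-\bf w)c$ and then using the second yields
\[
Y(a,\bf z-\bf w)Y(b,-\bf w)c=p(b,c)\,e^{-\bf w\cdot\bf T}\,Y(a,\bf z)Y(c,\bf w)b .
\]
For the second term, the recorded identity with $\bf w\mapsto-\bf w$ gives $Y(Y(a,\bf z)b,-\bf w)c=p(a,c)p(b,c)\,e^{-\bf w\cdot\bf T}\,Y(c,\bf w)Y(a,\bf z)b$. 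Subtracting, the two expressions under $e^{-\bf w\cdot\bf T}$ combine into a graded commutator, so that
\[
C(\bf z,\bf w)=p(b,c)\,e^{-\bf w\cdot\bf T}\,[\,Y(a,\bf z),Y(c,\bf w)\,]\,b .
\]
Here one notes that $Y(a,\bf z)Y(c,\bf w)b$ and $p(a,c)Y(c,\bf w)Y(a,\bf z)b$ a priori lie in different expansion spaces, but their difference — the commutator — is a well-defined element of $\cal K^{\bf z,\bf w}_{\mathrm{dist}}(V)$ by axiom V5.

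Finally, V5 supplies $\bf K\geq\bf 0$, depending only on the pair $(a,c)$ and not on $b$, with $[Y(a,\bf z),Y(c,\bf w)]=\sum_{\bf 0\leq\bf j\leq\bf K}\partial_{\bf w}^{(\bf j)}\Delta(\bf z,\bf w)\,C^{\bf j}(\bf w)$ where $C^{\bf j}(\bf w)=Y(a,\bf w)_{(\bf j)}Y(c,\bf w)$. Since $e^{-\bf w\cdot\bf T}=\sum_{\bf p\geq\bf 0}(-\bf w)^{\bf p}T^{(\bf p)}$ is built only from powers of $\bf w$ and from the degree-$0$ operators $T^{(\bf p)}$, and the latter commute with the degree-$N$ symbols $\Omega_{\bf z}^{\bf k},\Omega_{\bf w}^{\bf k}$ appearing in $\partial_{\bf w}^{(\bf j)}\Delta(\bf z,\bf w)$, one can move $e^{-\bf w\cdot\bf T}$ past each $\partial_{\bf w}^{(\bf j)}\Delta(\bf z,\bf w)$ to obtain
\[
C(\bf z,\bf w)=\sum_{\bf 0\leq\bf j\leq\bf K}\partial_{\bf w}^{(\bf j)}\Delta(\bf z,\bf w)\,g^{\bf j}(\bf w),\qquad g^{\bf j}(\bf w)\eqdef p(b,c)\,e^{-\bf w\cdot\bf T}C^{\bf j}(\bf w)b\in V\langle\!\langle\bf w\rangle\!\rangle ,
\]
which is precisely locality with uniform bound $\bf K$. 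The step needing the most care is this last one: one must check that $e^{-\bf w\cdot\bf T}\big(\partial_{\bf w}^{(\bf j)}\Delta(\bf z,\bf w)h(\bf w)\big)=\partial_{\bf w}^{(\bf j)}\Delta(\bf z,\bf w)\,e^{-\bf w\cdot\bf T}h(\bf w)$ for $h\in V\langle\!\langle\bf w\rangle\!\rangle$ is legitimate coefficient by coefficient, each coefficient being a \emph{finite} sum (using $\bf w^{\bf p}\Omega_{\bf w}^{\bf k}=\Omega_{\bf w}^{\bf k-\bf p}$ for $\bf k\geq\bf p$ and $0$ otherwise, and that $e^{-\bf w\cdot\bf T}$ preserves $V\langle\!\langle\bf w\rangle\!\rangle$), so that no genuinely infinite sum of elements of $V$ is ever formed; the Koszul-sign bookkeeping in the first two displays and the well-definedness of the various compositions of series are routine by comparison.
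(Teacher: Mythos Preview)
Your proof is correct and follows essentially the same route as the paper: both rewrite $C(\bf z,\bf w)$ as $p(b,c)\,e^{-\bf w\cdot\bf T}[Y(a,\bf z),Y(c,\bf w)]b$ via V4 and skew-symmetry, then invoke V5. You are somewhat more explicit than the paper about why left multiplication by the power series $e^{-\bf w\cdot\bf T}$ preserves the local expansion, but the argument is the same.
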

\begin{proof}
    It suffices to check when $a,b,c\in V$ are homogeneous. Expanding the left-hand side using V4 and skew-symmetry, we have
    \begin{align*}
        &Y(a,\bf z-\bf w)Y(b,-\bf w)c - Y(Y(a,\bf z)b,-\bf w)c \\
        &= e^{-\bf w\cdot \bf T}Y(a,\bf z)e^{\bf w\cdot \bf T}Y(b,-\bf w)c- p(b,c)p(a,c)e^{-\bf w\cdot \bf T}Y(c,\bf w)Y(a,\bf z)b\\
        &= p(b,c)e^{-\bf w\cdot \bf T}[Y(a,\bf z),Y(c,\bf w)]b.
    \end{align*}
    The bracket $[Y(a,\bf z),Y(c,\bf w)]$ local for all $a,c\in V$ by V5. We may multiply by the power series $p(a,b)e^{-\bf w\cdot \bf T}$, and the resulting series is still local.
\end{proof}
We note that weak associativity is equivalent to the associativity property in \cite[Theorem 3.3.1]{ravioli}, which is a more computationally tractable identity to work with:
\begin{prop}[Associativity]\label{prop:assoc}
    For $a,b\in V$ homogeneous, we have that for all $c\in V$ the series
    \begin{align*}
        Y(a,\bf z)Y(b,\bf w)c&\in V\langle\!\langle \bf z\rangle\!\rangle\langle\!\langle \bf w\rangle\!\rangle,\\
        p(a,b)Y(b,\bf w)Y(a,\bf z)c&\in V\langle\!\langle \bf w\rangle\!\rangle\langle\!\langle \bf z\rangle\!\rangle,\\
        Y(Y(a,\bf z-\bf w)b,\bf w)c&\in V\langle\!\langle \bf w\rangle\!\rangle\langle\!\langle \bf z-\bf w\rangle\!\rangle
    \end{align*}
    are expansions, in their respective domains, of the same element of $\cal K^{\bf z,\bf w,\bf z-\bf w}(V)$.
\end{prop}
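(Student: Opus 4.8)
The plan is to deduce the three--domain statement from weak commutativity (axiom V5), the analytic form of locality (Proposition \ref{prop:localanalytic}), and an \emph{iterate formula} that identifies the modes of a composed vertex operator with normal ordered products; the iterate formula will in turn be obtained from Goddard uniqueness (Proposition \ref{prop:Goddard}) together with the Dong--Li lemma (Lemma \ref{lem:Dongpoly}). To begin, since $Y(a,\bf z)$ and $Y(b,\bf w)$ are mutually local by V5, Proposition \ref{prop:localanalytic} already produces an element $f_c\in\cal K^{\bf z,\bf w,\bf z-\bf w}(V)$, with pole order along $\bf z-\bf w$ bounded uniformly in $c$, whose images under the embeddings $i_{\bf z,\bf w}$ and $i_{\bf w,\bf z}$ of Lemma \ref{lem:multivarembed} are $Y(a,\bf z)Y(b,\bf w)c$ and $p(a,b)Y(b,\bf w)Y(a,\bf z)c$. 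Unwinding that proof through the OPE description of locality also gives the explicit shape
\[ f_c \;=\; \normord{Y(a,\bf z)Y(b,\bf w)}c \;+\; \sum_{\bf 0\le\bf j\le\bf K}\Omega^{\bf j}_{\bf z-\bf w}\,\bigl(Y(a,\bf w)_{(\bf j)}Y(b,\bf w)\bigr)c. \]
Thus the whole proposition reduces to the single identity $Y(Y(a,\bf z-\bf w)b,\bf w)c = i_{\bf w,\bf z-\bf w}(f_c)$ in $V\langle\!\langle\bf w\rangle\!\rangle\langle\!\langle\bf z-\bf w\rangle\!\rangle$.

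To prove this identity I would expand both sides in powers of $\bf z-\bf w$ and in the forms $\Omega^{\bf j}_{\bf z-\bf w}$ and compare coefficients. The key point on the right is that $i_{\bf w,\bf z-\bf w}$ turns an $\Omega_{\bf z}$--form into $\Omega_{\bf w}$--forms times nonnegative powers of $\bf z-\bf w$ (explicitly $i_{\bf w,\bf z-\bf w}\,\Omega^{\bf m}_{\bf z}=e^{(\bf z-\bf w)\partial_{\bf w}}\Omega^{\bf m}_{\bf w}$) and fixes $\Omega_{\bf w}$--forms, so it introduces no new $\Omega_{\bf z-\bf w}$--forms; consequently, using $Y(a,\bf w)_{(-\bf 1-\bf k)}Y(b,\bf w)=\normord{\partial^{(\bf k)}_{\bf w}Y(a,\bf w)Y(b,\bf w)}$,
\[ i_{\bf w,\bf z-\bf w}(f_c) = \sum_{\bf k\ge\bf 0}(\bf z-\bf w)^{\bf k}\bigl(Y(a,\bf w)_{(-\bf 1-\bf k)}Y(b,\bf w)\bigr)c + \sum_{\bf 0\le\bf j\le\bf K}\Omega^{\bf j}_{\bf z-\bf w}\bigl(Y(a,\bf w)_{(\bf j)}Y(b,\bf w)\bigr)c. \]
On the other hand, unwinding the definition of the composition gives $Y(Y(a,\bf z-\bf w)b,\bf w)c = \sum_{\bf k}(\bf z-\bf w)^{\bf k}Y(a_{(-\bf 1-\bf k)}b,\bf w)c + \sum_{\bf j}\Omega^{\bf j}_{\bf z-\bf w}Y(a_{(\bf j)}b,\bf w)c$. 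Matching coefficients, the required identity is exactly the iterate formula $Y(a,\bf w)_{(\bf n)}Y(b,\bf w) = Y(a_{(\bf n)}b,\bf w)$ for every mode index $\bf n$ (i.e. $\bf n=-\bf 1-\bf k$ and $\bf n=\bf j$, $\bf k,\bf j\ge\bf 0$).

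The iterate formula I would prove by Goddard uniqueness. Each side is a field: $Y(a_{(\bf n)}b,\bf w)$ by the definition of $Y$, while $Y(a,\bf w)_{(\bf j)}Y(b,\bf w)$ is a field by the remark after Definition \ref{defn:locality} and $Y(a,\bf w)_{(-\bf 1-\bf k)}Y(b,\bf w)=\normord{Y(T^{(\bf k)}a,\bf w)Y(b,\bf w)}$ is a field by Corollary \ref{cor:trans}. Each side is mutually local with every $Y(d,\bf w)$: $Y(a_{(\bf n)}b,\bf w)$ by V5, $Y(a,\bf w)_{(\bf j)}Y(b,\bf w)$ by the first part of the Dong--Li lemma, and $\normord{Y(T^{(\bf k)}a,\bf w)Y(b,\bf w)}$ by the second part. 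It then suffices to check agreement after applying $\mathds 1$. Using $a_{(\bf m)}\mathds 1=0$ for $\bf m\ge\bf 0$ (because $Y(a,\bf w)\mathds 1=e^{\bf w\cdot\bf T}a$ has no $\Omega_{\bf w}$--terms), one finds $\normord{\partial^{(\bf k)}_{\bf w}Y(a,\bf w)Y(b,\bf w)}\mathds 1 = \partial^{(\bf k)}_{\bf w}Y(a,\bf w)_+\cdot e^{\bf w\cdot\bf T}b\in V[\![\bf w]\!]$; conjugating the normal ordered product by $e^{\bf u\cdot\bf T}$ via translation covariance V4 shows this series is $e^{\bf w\cdot\bf T}$ applied to its value at $\bf w=\bf 0$, which is $a_{(-\bf 1-\bf k)}b$. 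Similarly, translation covariance of $Y(a,\bf w)_{(\bf j)}Y(b,\bf w)$ together with the identities derived in the proof of Proposition \ref{prop:skewsymmetry} give $\bigl(Y(a,\bf w)_{(\bf j)}Y(b,\bf w)\bigr)\mathds 1 = e^{\bf w\cdot\bf T}(a_{(\bf j)}b)$. Since these match $Y(a_{(-\bf 1-\bf k)}b,\bf w)\mathds 1 = e^{\bf w\cdot\bf T}(a_{(-\bf 1-\bf k)}b)$ and $Y(a_{(\bf j)}b,\bf w)\mathds 1 = e^{\bf w\cdot\bf T}(a_{(\bf j)}b)$, Proposition \ref{prop:Goddard} closes the argument.

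The main obstacle I anticipate is sign bookkeeping, concentrated in two places: (i) the interpretation of $Y(Y(a,\bf z-\bf w)b,\bf w)$, whose part regular in $\bf z-\bf w$ and whose $\Omega_{\bf z-\bf w}$--part carry different cohomological degrees; and (ii) the precise action of $i_{\bf w,\bf z-\bf w}$ on the three families $\Omega^{\bf j}_{\bf z},\Omega^{\bf k}_{\bf w},\Omega^{\bf m}_{\bf z-\bf w}$ — it is exactly this computation that legitimizes the coefficient matching in the second step and guarantees that no $\Omega_{\bf z-\bf w}$--forms leak in from the normal ordered part of $f_c$. An equivalent route that avoids the iterate formula is to take the identity obtained inside the proof of weak associativity (Proposition \ref{prop:weakassoc}), namely $Y(a,\bf z-\bf w)Y(b,-\bf w)c - Y(Y(a,\bf z)b,-\bf w)c = p(b,c)e^{-\bf w\cdot\bf T}[Y(a,\bf z),Y(c,\bf w)]b$, substitute $\bf w\mapsto-\bf w$, and reinterpret via $\bf z\mapsto\bf z+\bf w$ to exhibit the third series directly as $i_{\bf w,\bf z-\bf w}(f_c)$; the difficulty there is the same sign- and change-of-expansion bookkeeping, now carried out in the full three--variable configuration ring $\cal K^{\bf z,\bf w,\bf z-\bf w}(V)$.
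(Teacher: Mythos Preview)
Your argument is correct but takes a genuinely different route from the paper. The paper does not prove the iterate formula first; instead it invokes weak associativity (Proposition \ref{prop:weakassoc}) directly. Since weak associativity asserts that $Y(a,\bf x-\bf y)Y(b,-\bf y)c - Y(Y(a,\bf x)b,-\bf y)c$ is local in $(\bf x,\bf y)$, Proposition \ref{prop:localanalytic} shows these two series are the $i_{\bf x,\bf y}$- and $i_{\bf y,\bf x}$-expansions of a single element of $\cal K^{\bf x,\bf y,\bf x-\bf y}(V)$. The substitution $(\bf x,\bf y)=(\bf z-\bf w,-\bf w)$ then identifies the two target domains with $V\langle\!\langle\bf z\rangle\!\rangle\langle\!\langle\bf w\rangle\!\rangle$ and $V\langle\!\langle\bf w\rangle\!\rangle\langle\!\langle\bf z-\bf w\rangle\!\rangle$, completing the proof in a few lines. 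This is exactly the ``equivalent route'' you sketch at the end of your proposal.

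The logical dependency is thus reversed: the paper runs skew-symmetry $\Rightarrow$ weak associativity $\Rightarrow$ associativity $\Rightarrow$ iterate formula (Proposition \ref{prop:iterate}), whereas you run Goddard/Dong--Li $\Rightarrow$ iterate formula $\Rightarrow$ associativity. Your approach has the merit of producing the iterate formula independently and avoids relying on skew-symmetry, at the price of the sign and expansion bookkeeping you correctly flag (the $i_{\bf w,\bf z-\bf w}$-image of the normal ordered term, and the verification that $C^{\bf j}(\bf w)\mathds 1 = e^{\bf w\cdot\bf T}a_{(\bf j)}b$ via the identities inside the skew-symmetry proof). The paper's route is shorter for this particular proposition because the work has already been absorbed into Propositions \ref{prop:skewsymmetry} and \ref{prop:weakassoc}.
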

\begin{proof}
    The first two series being expansions of the same element of $\cal K^{\bf z,\bf w,\bf z-\bf w}(V)$ follows from Proposition \ref{prop:localanalytic}.
    By the same argument, we note that weak associativity implies that $Y(a,\bf x-\bf y)Y(b,-\bf y)c$ and $Y(Y(a,\bf x)b,-\bf y)c$ are images of the same element of $\cal K^{\bf x,\bf y,\bf x-\bf y}(V)$ in the codomains $V\langle\!\langle \bf x\rangle\!\rangle\langle\!\langle \bf y\rangle\!\rangle$ and $V\langle\!\langle \bf y\rangle\!\rangle\langle\!\langle \bf x\rangle\!\rangle$ respectively. Now make the formal variable substitution $(\bf x,\bf y)=(\bf z-\bf w,-\bf w)$ in $\cal K^{\bf x,\bf y,\bf x-\bf y}$, so 
    \[Y(a,\bf z)Y(b,\bf w)c \in V\langle\!\langle \bf z-\bf w\rangle\!\rangle\langle\!\langle -\bf w\rangle\!\rangle=V\langle\!\langle \bf z\rangle\!\rangle\langle\!\langle \bf w\rangle\!\rangle\]
    and similarly
    \[Y(Y(a,\bf z-\bf w)b,\bf w)c \in V\langle\!\langle -\bf w\rangle\!\rangle\langle\!\langle \bf z-\bf w\rangle\!\rangle = V\langle\!\langle \bf w\rangle\!\rangle\langle\!\langle \bf z-\bf w\rangle\!\rangle.\]
\end{proof}
Using the associativity relation for VAs, we can prove many convenient identities.
For example, the associativity property as stated above can be used to show the iterate formula \cite[Remark 3.1.12]{LepowskyLi}. Indeed the same approach works here.
\begin{prop}[Iterate/associator formula]\label{prop:iterate}
    For all $a,b\in V$ homogeneous and $\bf k\geq \bf 0$ we have
    \begin{align}
        \label{eq:kprod1} Y(a_{(\bf k)}b,\bf w) &= Y(a,\bf w)_{(\bf k)}Y(b,\bf w) \eqdef \Res_{\bf z}\left((\bf z-\bf w)^{\bf k}[Y(a,\bf z),Y(b,\bf w)]\right),\\
        \label{eq:kprod2} Y(a_{(-\bf 1-\bf k)}b,\bf w) &= Y(a,\bf w)_{(-\bf 1-\bf k)}Y(b,\bf w) \eqdef \ \normord{\partial_{\bf w}^{(\bf k)}Y(a,\bf w)Y(b,\bf w)}.
    \end{align}
    Equivalently, we have the series identity
    \begin{equation}
        Y(Y(a,\bf z)b,\bf w) = Y(a,\bf z+\bf w)Y(b,\bf w) - p(a,b)Y(b,\bf w)\left(Y(a,\bf z+\bf w)_--Y(a,\bf w+\bf z)_-\right).
    \end{equation}
\end{prop}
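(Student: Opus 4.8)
The plan is to derive the two iterate formulas, and then the closed series identity, from the Associativity property (Proposition~\ref{prop:assoc}) together with the OPE description of mutual locality, using the three‑variable ring $\cal K^{\bf z,\bf w,\bf z-\bf w}(V)$ as the bookkeeping device; this is the higher‑dimensional counterpart of the classical vertex algebra argument of \cite[Remark 3.1.12]{LepowskyLi}. Fix homogeneous $a,b\in V$ and an arbitrary $c\in V$. By Proposition~\ref{prop:assoc} there is a single element $f_c\in\cal K^{\bf z,\bf w,\bf z-\bf w}(V)$ whose expansions are $i_{\bf z,\bf w}(f_c)=Y(a,\bf z)Y(b,\bf w)c$ and $i_{\bf w,\bf z-\bf w}(f_c)=Y(Y(a,\bf z-\bf w)b,\bf w)c$. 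Combining the OPE description of mutual locality (the lemma just before Proposition~\ref{prop:localanalytic}) with the uniqueness of the decomposition established in its proof, one puts $f_c$ in the canonical form
\[
f_c \;=\; \normord{Y(a,\bf z)Y(b,\bf w)}c \;+\; \sum_{\bf 0\leq\bf k\leq\bf K}\Omega^{\bf k}_{\bf z-\bf w}\bigl(Y(a,\bf w)_{(\bf k)}Y(b,\bf w)\bigr)c,
\]
the first summand lying in $\cal K^{\bf z,\bf w}(V)$ and carrying no factor $\Omega_{\bf z-\bf w}$.

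Next I would apply $i_{\bf w,\bf z-\bf w}$ to this identity. On the left this is $Y(Y(a,\bf z-\bf w)b,\bf w)c$, which by the definition of this composition equals $\sum_{\bf k\geq\bf 0}(\bf z-\bf w)^{\bf k}Y(a_{(-\bf 1-\bf k)}b,\bf w)c+\sum_{\bf k\geq\bf 0}\Omega^{\bf k}_{\bf z-\bf w}Y(a_{(\bf k)}b,\bf w)c$. On the right, the term $\Omega^{\bf k}_{\bf z-\bf w}(\cdots)$ is fixed by $i_{\bf w,\bf z-\bf w}$, whereas the Taylor re‑expansion about $\bf w$ of $\normord{Y(a,\bf z)Y(b,\bf w)}c$ produces only powers of $\bf z-\bf w$ with coefficients in $V\langle\!\langle\bf w\rangle\!\rangle$ and \emph{no} $\Omega_{\bf z-\bf w}$ term, because each $\Omega^{\bf j}_{\bf z}$ Taylor‑expands into $\Omega_{\bf w}$‑terms and $\Omega^{\bf a}_{\bf w}\Omega^{\bf b}_{\bf w}=0$ kills the degree‑$2N$ contributions. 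Comparing the parts carrying an $\Omega_{\bf z-\bf w}$ factor gives $Y(a_{(\bf k)}b,\bf w)c=\bigl(Y(a,\bf w)_{(\bf k)}Y(b,\bf w)\bigr)c$ for all $c$, which is \eqref{eq:kprod1}. Comparing the $\Omega_{\bf z-\bf w}$‑free parts and extracting the coefficient of $(\bf z-\bf w)^{\bf k}$ — equivalently, applying $\partial^{(\bf k)}_{\bf z}$ and then setting $\bf z=\bf w$, which is legitimate since Hasse derivatives commute with the $\pm$‑splitting and with the normal ordered product — yields $Y(a_{(-\bf 1-\bf k)}b,\bf w)=\normord{\partial^{(\bf k)}_{\bf w}Y(a,\bf w)Y(b,\bf w)}$, which is \eqref{eq:kprod2}.

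For the closed series identity I would substitute \eqref{eq:kprod1} and \eqref{eq:kprod2} into the definition $Y(Y(a,\bf z)b,\bf w)=\sum_{\bf k}\bf z^{\bf k}Y(a_{(-\bf 1-\bf k)}b,\bf w)+\sum_{\bf k}\Omega^{\bf k}_{\bf z}Y(a_{(\bf k)}b,\bf w)$. The regular part becomes $\sum_{\bf k}\bf z^{\bf k}\normord{\partial^{(\bf k)}_{\bf w}Y(a,\bf w)Y(b,\bf w)}$; since $\partial^{(\bf k)}_{\bf w}$ preserves the $\pm$‑decomposition, translation covariance (V4) gives $\sum_{\bf k}\bf z^{\bf k}\partial^{(\bf k)}_{\bf w}Y(a,\bf w)_+=Y(a,\bf z+\bf w)_+$ and $\sum_{\bf k}\bf z^{\bf k}\partial^{(\bf k)}_{\bf w}Y(a,\bf w)_-=Y(a,\bf w+\bf z)_-$, so this part equals $Y(a,\bf z+\bf w)_+Y(b,\bf w)+p(a,b)Y(b,\bf w)Y(a,\bf w+\bf z)_-$. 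The singular part $\sum_{\bf k}\Omega^{\bf k}_{\bf z}\bigl(Y(a,\bf w)_{(\bf k)}Y(b,\bf w)\bigr)$ is identified with $[Y(a,\bf z+\bf w)_-,Y(b,\bf w)]$ by running the $\Delta_-$‑form of the OPE through the formal substitution $\bf z'\mapsto\bf z+\bf w$, under which $\partial^{(\bf k)}_{\bf w}\Delta_-(\bf z',\bf w)$ re‑expands to $\Omega^{\bf k}_{\bf z}$. Adding the two pieces and using $Y(a,\bf z+\bf w)_+Y(b,\bf w)+[Y(a,\bf z+\bf w)_-,Y(b,\bf w)]=Y(a,\bf z+\bf w)Y(b,\bf w)-p(a,b)Y(b,\bf w)Y(a,\bf z+\bf w)_-$ gives the stated formula.

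The step I expect to be the main obstacle is the bookkeeping in the second paragraph: tracking precisely how the degree‑$0$, degree‑$N$, and degree‑$2N$ components of $\cal K^{\bf z,\bf w,\bf z-\bf w}(V)$ transform under $i_{\bf z,\bf w}$ and $i_{\bf w,\bf z-\bf w}$, and in particular checking that re‑expanding $\normord{Y(a,\bf z)Y(b,\bf w)}c$ contributes no $\Omega_{\bf z-\bf w}$ terms, so that the coefficient comparisons separating \eqref{eq:kprod1} from \eqref{eq:kprod2} are valid. This is exactly where the relations defining $\cal K^{\bf z,\bf w,\bf z-\bf w}$ (together with $\Omega^{\bf j}_{\bf x}\Omega^{\bf k}_{\bf x}=0$) enter and where the argument genuinely departs from the classical case; the Koszul‑sign manipulations in the last step are tedious but routine once \eqref{eq:kprod1} and \eqref{eq:kprod2} are in hand.
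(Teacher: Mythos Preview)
Your proposal is correct and follows essentially the same route as the paper: both arguments invoke Proposition~\ref{prop:assoc} to obtain a common element of $\cal K^{\bf z,\bf w,\bf z-\bf w}(V)$, use the OPE to write it as normal ordered product plus $\Omega_{\bf z-\bf w}$ terms, re-expand in the $(\bf w,\bf z-\bf w)$ domain, and compare coefficients. The paper's proof is terser on exactly the point you flag as the main obstacle (it simply writes $\normord{Y(a,\bf w+(\bf z-\bf w))Y(b,\bf w)}=\sum_{\bf k}(\bf z-\bf w)^{\bf k}\normord{\partial^{(\bf k)}_{\bf w}Y(a,\bf w)Y(b,\bf w)}$ without further comment), and it omits entirely the derivation of the closed series identity, which you supply.
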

\begin{proof}
%    First, we show the two statements are equivalent.
%    \begin{align*}
%        Y(Y(a,z)_+b,w) &= \sum_{k\geq 0}z^kY(a_{(-1-k)}b,w)\\
%        &= \sum_{k\geq 0}z^k\normord{\partial^{(k)}_wY(a,w)Y(b,w)} = \normord{Y(a,w+z)Y(b,w)}.
%    \end{align*}
%    Similarly, we have
%    \begin{align*}
%        Y(Y(a,z)_-b, w) 
%        % &= \sum_{k\geq 0}\Omega^{\bf k}_{\bf z}Y(a_{(k)}b,w)\\
%        &= \sum_{k\geq 0}\Omega^{\bf k}_{\bf z}\Res_x\left((x-w)^k[Y(a,x),Y(b,w)]\right)\\
%        % &= \sum_{k\geq 0}\Omega^{\bf k}_{\bf z}\sum_{\ell\geq 0}\binom{k}{\ell}(-w)^{k-\ell}[a_{(\ell)},Y(b,w)]\\
%        % &= \sum_{k,\ell\geq 0}\binom{k+\ell}{\ell}(-w)^{k}\Omega^{k+\ell}_z[a_{(\ell)},Y(b,w)]\\
%        &= \sum_{\ell\geq 0}\partial_{-w}^{(\ell)}\Delta_-(z+w)[a_{(\ell)},Y(b,w)]\\
%        &= [Y(a,z+w)_-,Y(b,w)].
%    \end{align*}
%    Combining the two gives us the desired series identity, giving us the stated equivalence.
    We use associativity for CVAs. In $V\langle\!\langle \bf w\rangle\!\rangle\langle\!\langle \bf z-\bf w\rangle\!\rangle$, we have
    \begin{equation*}
        Y(Y(a,\bf z-\bf w)b,\bf w)c = \sum_{\bf k\geq \bf 0}(\bf z-\bf w)^{\bf k}Y(a_{(-\bf 1-\bf k)}b,\bf w) + \Omega_{\bf z-\bf w}^{\bf k}Y(a_{(\bf k)}b,\bf w),
    \end{equation*}
    and in $V\langle\!\langle \bf z\rangle\!\rangle\langle\!\langle \bf w\rangle\!\rangle$ we have the operator product expansion
    \begin{equation*}
        Y(a,\bf z)Y(b,\bf w)c =\, \normord{Y(a,\bf z)Y(b,\bf w)} + \sum_{\bf k\geq \bf 0}\partial^{(\bf k)}_{\bf w}\Delta_-(\bf z,\bf w) Y(a,\bf w)_{(\bf k)}Y(b,\bf w).
    \end{equation*}
    Now we take the preimage of $Y(a,\bf z)Y(b,\bf w)c$ in $\cal K^{\bf z,\bf w,\bf z-\bf w}$. In the region $V\langle\!\langle \bf w\rangle\!\rangle\langle\!\langle \bf z-\bf w\rangle\!\rangle$, the above equation translates to
    \begin{align*}
        Y(a,\bf z)Y(b,\bf w)c &=\, \normord{Y(a,\bf w+(\bf z-\bf w))Y(b,\bf w)} + \sum_{\bf k\geq \bf 0}\Omega_{\bf z-\bf w}^{\bf k} Y(a,\bf w)_{(\bf k)}Y(b,\bf w)\\
        &=\sum_{\bf k\geq \bf 0}(\bf z-\bf w)^k\normord{\partial^{(\bf k)}_{\bf w}Y(a,\bf w)Y(b,\bf w)} + \sum_{\bf k\geq \bf 0}\Omega_{\bf z-\bf w}^{\bf k} Y(a,\bf w)_{(\bf k)}Y(b,\bf w).
    \end{align*}
    Comparing this to $Y(Y(a,\bf z-\bf w)b,\bf w)c$ gives us the desired identities.
\end{proof}
The iterate formula allows us to prove many useful identities for CVAs, such as the following:
\begin{cor}[Commutator formula]\label{cor:commutator formula}
% The following hold for all $a,b\in V$:
% \begin{enumerate}
    We have the following identity for $a,b\in V$:
    \begin{equation}
    \begin{aligned}
        [Y(a,\bf z),Y(b,\bf w)] &= \sum_{\bf 0\leq \bf k\leq \bf K}\partial^{(\bf k)}_{\bf w}\Delta(\bf z,\bf w)Y(a_{(\bf k)}b,\bf w)\\
        &= Y(Y(a,\bf z-\bf w)_-b-Y(a,-\bf w+\bf z)_-b,\bf w),
    \end{aligned}
    \end{equation}
    where $\bf K$ is the bound from axiom V1.
    % \item We have the following operator product expansions for $a,b\in V$ homogeneous:
    % \begin{align}
    %     Y(a,z)Y(b,w) &= \sum_{j\geq 0}\partial^{(\bf j)}_{\bf w}\Delta_-(\bf z,\bf w)Y(a_{(j)}b,w) + \normord{Y(a,z)Y(b,w)},\\
    %     p(a,b)Y(b,w)Y(a,z) &= -\sum_{j\geq 0}\partial^{(\bf j)}_{\bf w}\Delta_+(\bf z,\bf w)Y(a_{(j)}b,w) + \normord{Y(a,z)Y(b,w)}.
    % \end{align}
    % \item For all $a,b,c\in V$ we have
    % \begin{equation}
    %     \begin{aligned}
    %         &[Y(a,z),Y(b_{(-1)}c,w)] \\
    %         &= \sum_{k\geq 0}\partial^{(k)}_w\Delta(\bf z,\bf w) Y(a_{(k)}(b_{(-1)}c),w)\\
    %         &= \sum_{k\geq 0}\partial^{(k)}_w\Delta(\bf z,\bf w)\left(Y((a_{(k)}b)_{(-1)}c,w) + (-1)^{p(b)(p(a)+N)}Y(b_{(-1)}(a_{(k)}c),w)\right).
    %     \end{aligned}
    % \end{equation}
    % Hence $a_{(k)}(b_{(-1)}c) = (a_{(k)}b)_{(-1)}c + (-1)^{p(b)(p(a)+N)}b_{(-1)}(a_{(k)}c)$ for all $k\geq 0$.
    % \end{enumerate}
\end{cor}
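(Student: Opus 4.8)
The plan is to derive the commutator formula directly from the iterate formula of Proposition~\ref{prop:iterate}, exactly the way the analogous identity is obtained for vertex algebras. First I would start from the series identity at the end of Proposition~\ref{prop:iterate}, namely
\[
Y(Y(a,\bf z)b,\bf w) = Y(a,\bf z+\bf w)Y(b,\bf w) - p(a,b)Y(b,\bf w)\bigl(Y(a,\bf z+\bf w)_- - Y(a,\bf w+\bf z)_-\bigr).
\]
The key point is to recognize that the right-hand side, after a suitable rearrangement, involves only the \emph{singular} part of $Y(a,\bf z)$ as seen from the two different expansion regions. Concretely, I would apply the definition $Y(Y(a,\bf z)b,\bf w) = \sum_{\bf k\geq \bf 0}\bf z^{\bf k}Y(a_{(-\bf 1-\bf k)}b,\bf w) + \Omega_{\bf z}^{\bf k}Y(a_{(\bf k)}b,\bf w)$ and compare the $\Omega^{\bf k}_{\bf z}$-coefficients on both sides. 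Combined with \eqref{eq:kprod1}, which identifies $Y(a_{(\bf k)}b,\bf w)$ with $\Res_{\bf z}((\bf z-\bf w)^{\bf k}[Y(a,\bf z),Y(b,\bf w)])$, this should pin down $[Y(a,\bf z),Y(b,\bf w)]$ as $\sum_{\bf 0\leq \bf k\leq \bf K}\partial^{(\bf k)}_{\bf w}\Delta(\bf z,\bf w)Y(a_{(\bf k)}b,\bf w)$.

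For the first equality I would argue as follows. By axiom V5 (weak commutativity) and Definition~\ref{defn:locality}, the commutator $[Y(a,\bf z),Y(b,\bf w)]$ is a local series, so it admits an expansion $\sum_{\bf 0\leq \bf k\leq \bf K}\partial^{(\bf k)}_{\bf w}\Delta(\bf z,\bf w)g^{\bf k}(\bf w)$ with $g^{\bf k}(\bf w) = \Res_{\bf z}((\bf z-\bf w)^{\bf k}[Y(a,\bf z),Y(b,\bf w)]) = Y(a,\bf w)_{(\bf k)}Y(b,\bf w)$; the bound $\bf K$ is exactly the one coming from axiom V1 applied to $a$, since for $\bf k > \bf K$ we have $a_{(\bf k)} = 0$ and hence $a_{(\bf k)}b = 0$. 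Then \eqref{eq:kprod1} converts $Y(a,\bf w)_{(\bf k)}Y(b,\bf w)$ into $Y(a_{(\bf k)}b,\bf w)$, which gives the first line.

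For the second equality, I would expand $\partial^{(\bf k)}_{\bf w}\Delta(\bf z,\bf w) = \partial^{(\bf k)}_{\bf w}\Delta_-(\bf z,\bf w) - (-\text{expansion of }\Delta_+)$ and use Lemma~\ref{lem:multivarembed}: $\partial^{(\bf k)}_{\bf w}\Delta_-(\bf z,\bf w)$ is the image of $\Omega^{\bf k}_{\bf z-\bf w}$ under $i_{\bf z,\bf w}$ and $-\partial^{(\bf k)}_{\bf w}\Delta_+(\bf z,\bf w)$ is its image under $i_{\bf w,\bf z}$. Summing $\sum_{\bf k}\Omega^{\bf k}_{\bf z-\bf w}Y(a_{(\bf k)}b,\bf w)$ against the two embeddings and recognizing the singular part of $Y(a,\bf z-\bf w)$ via the convention $i_{\bf z,\bf w}\Omega^{\bf m}_{\bf z-\bf w} = e^{-\bf w\partial_{\bf z}}\Omega^{\bf m}_{\bf z}$, I would rewrite $\sum_{\bf k}\Omega^{\bf k}_{\bf z-\bf w}Y(a_{(\bf k)}b,\bf w)$ as $Y(a,\bf z-\bf w)_-b$ in the $i_{\bf z,\bf w}$-expansion and as $-Y(a,-\bf w+\bf z)_-b$ in the $i_{\bf w,\bf z}$-expansion, then apply $Y(-,\bf w)$; this matches the stated second line. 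I expect the main obstacle to be bookkeeping of the cohomological signs $p(a,b)$ and the factors $(-1)^{|\bf k|+N}$ when translating between the $\Delta_-$ and $\Delta_+$ expansions and verifying that $\Delta(\bf x,\bf y) = \Delta_-(\bf x,\bf y) - \Delta_-(-\bf y,-\bf x)$ is applied consistently; once the sign conventions from Definition~\eqref{eq:negativemodedef} and Lemma~\ref{lem:deltaidentities}(3) are tracked carefully, the two forms should coincide term by term.
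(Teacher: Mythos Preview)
Your proposal is correct and matches the paper's intended approach: the corollary is stated without proof immediately after Proposition~\ref{prop:iterate}, so the paper expects exactly the derivation you give, namely combining V5 with \eqref{eq:kprod1} for the first equality and unpacking $\Delta = \Delta_- + \Delta_+$ via Lemma~\ref{lem:multivarembed} for the second. One minor slip: you write ``for $\bf k > \bf K$ we have $a_{(\bf k)} = 0$'', but V1 only gives $a_{(\bf k)}b = 0$ (the bound depends on both $a$ and $b$); this is what the statement of the corollary means by ``the bound from axiom V1'' and is all you need.
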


Now we use locality and the iterate formula to prove our analog of the Jacobi identity, which first appeared in \cite[Theorem 8.8.9]{FLM}. This identity is formulated in terms of the three-variable $\Delta$-functions $\Delta(\bf x-\bf y,\bf z) = e^{-\bf y\cdot \partial_{\bf x}}\Delta(\bf x,\bf z)$.
\begin{thm}[Jacobi identity]\label{thm:Jacobi}
    For all $a,b\in V$ homogeneous and $c\in V$ we have
    \begin{equation}
    \begin{aligned}
        & \Delta(\bf x-\bf y,\bf z)Y(a,\bf x)Y(b,\bf y)c - p(a,b)\Delta(-\bf y+\bf x,\bf z)Y(b,\bf y)Y(a,\bf x)c\\
        & =\Delta(\bf x-\bf z,\bf y)Y(Y(a,\bf z)b,\bf y)c.
    \end{aligned}
    \end{equation}
\end{thm}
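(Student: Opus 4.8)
The plan is to mimic the classical derivation of the Jacobi identity from commutativity together with associativity (\cite[Theorem 8.8.9]{FLM}, \cite{LepowskyLi}), transported to the $\Delta$-calculus: the three-variable formal delta distribution is replaced by $\Delta(\bf x-\bf y,\bf z)$, and the classical $\delta$-function identities by the two identities $\Delta(\bf x-\bf z,\bf y)=(-1)^N\Delta(\bf y+\bf z,\bf x)$ and $\Delta(\bf x-\bf z,\bf y)=\Delta(\bf x-\bf y,\bf z)-\Delta(-\bf y+\bf x,\bf z)$ from the last lemma of Section \ref{subsec:taylor}, supplemented by the substitution property of Lemma \ref{lem:deltaidentities}(1).

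First I would fix homogeneous $a,b$ and $c\in V$ and apply Proposition \ref{prop:assoc}: the three products $Y(a,\bf x)Y(b,\bf y)c$, $p(a,b)Y(b,\bf y)Y(a,\bf x)c$ and $Y(Y(a,\bf x-\bf y)b,\bf y)c$ are the $i_{\bf x,\bf y}$-, $i_{\bf y,\bf x}$- and $i_{\bf y,\bf x-\bf y}$-expansions of one element $f_c\in\cal K^{\bf x,\bf y,\bf x-\bf y}(V)$, and by Proposition \ref{prop:localanalytic} the order of its pole along $\bf x-\bf y$ is bounded, by the $\bf K$ of axiom V1, uniformly in $c$. This uniform bound replaces the classical trick of multiplying through by $(\bf x-\bf y)^{\bf K}$ to clear denominators, which is not available here since $(z_i-w_i)\Delta_\pm(\bf z,\bf w)=0$; it is what guarantees that every sum occurring below is finite in each fixed total degree.

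Next I would observe that, as a function of the variable group $\bf z$ (with coefficients series in $\bf x$ and $\bf y$), each side of the asserted identity is a formal distribution, i.e.\ lies in a space of the form $\cal K^{\bf z}_{\mathrm{dist}}$; hence it suffices to prove equality after applying each of the two families of coefficient functionals $\Res_{\bf z}(\bf z^{\bf k}\,\cdot\,)$ and $\Res_{\bf z}(\Omega^{\bf k}_{\bf z}\,\cdot\,)$, $\bf k\geq\bf 0$. For $\Res_{\bf z}(\bf z^{\bf k}\,\cdot\,)$ one uses $\Res_{\bf z}(\bf z^{\bf k}\Delta(\bf x-\bf y,\bf z))=(-1)^N(\bf x-\bf y)^{\bf k}=\Res_{\bf z}(\bf z^{\bf k}\Delta(-\bf y+\bf x,\bf z))$ to collapse the left side to $(-1)^N(\bf x-\bf y)^{\bf k}[Y(a,\bf x),Y(b,\bf y)]c$; since $\Delta(\bf x-\bf z,\bf y)$ carries no $\Omega_{\bf z}$, only the part of $Y(Y(a,\bf z)b,\bf y)c$ singular in $\bf z$ survives on the right, producing a sum of terms $\partial^{(\bf m)}_{\bf x}\Delta(\bf x,\bf y)\,Y(a_{(\bf m+\bf k)}b,\bf y)c$; these match via the commutator formula (Corollary \ref{cor:commutator formula}) after converting $\partial_{\bf x}$- into $\partial_{\bf y}$-derivatives and reindexing by means of Lemma \ref{lem:deltaidentities}. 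For $\Res_{\bf z}(\Omega^{\bf k}_{\bf z}\,\cdot\,)$ one instead extracts the coefficient of $\bf z^{\bf k}$ in each series; here the identity $e^{-\bf y\partial_{\bf x}}\Omega^{\bf k}_{\bf x}=\partial^{(\bf k)}_{\bf y}\Delta_-(\bf x,\bf y)$ of Lemma \ref{lem:multivarembed} (and its $\Delta_+$ counterpart) turns the statement into the iterate/associator formula of Proposition \ref{prop:iterate}, in the expansion $V\langle\!\langle\bf y\rangle\!\rangle\langle\!\langle\bf x-\bf y\rangle\!\rangle$ supplied by Proposition \ref{prop:assoc}. Conceptually, the first family of extractions repackages the statement that $i_{\bf x,\bf y}f_c-i_{\bf y,\bf x}f_c$ is supported on $\bf x=\bf y$ (the commutator formula), while the second repackages the identity $i_{\bf x,\bf y}f_c=i_{\bf y,\bf x-\bf y}f_c$ (associativity).

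No new idea is needed beyond the classical one; the main obstacle is the sign- and index-bookkeeping. One must carry the Koszul signs produced by moving the degree-$N$ forms $\Omega_{\bf x},\Omega_{\bf y},\Omega_{\bf z}$ past one another and past elements of $V$ (they graded-commute only up to $(-1)^N$, and the Hasse derivatives contribute a further $(-1)^{|\bf m|}$ through $\partial^{(\bf m)}_{\bf z}\Omega^{\bf k}_{\bf z}=(-1)^{|\bf m|}\binom{\bf k+\bf m}{\bf m}\Omega^{\bf k+\bf m}_{\bf z}$), keep the multi-index reindexings straight in each of the two residue computations, and confirm at the start that the pole order along $\bf x-\bf y$ is genuinely uniform in $c$. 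Once $f_c$ is in hand, the rest is a lengthy but routine verification of these two families of component identities.
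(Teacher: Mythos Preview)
Your proposal is correct and follows essentially the same route as the paper's own proof: both expand the Jacobi identity in the variable $\bf z$, separating into the $\Omega_{\bf z}^{\bf k}$-coefficients (which reduce to the commutator formula, Corollary \ref{cor:commutator formula}) and the $\bf z^{\bf k}$-coefficients (which reduce to the iterate formula, Proposition \ref{prop:iterate}). The only difference is cosmetic: the paper works directly with the already-established commutator and iterate formulas without re-invoking the element $f_c$ from Proposition \ref{prop:assoc}, so your preliminary discussion of $f_c$ and the uniform pole bound, while correct, is not needed once those two propositions are in hand.
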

\begin{proof}
    The left-hand side of the Jacobi identity can be expanded in $z$ as
    \begin{align*}
        &\Delta(x-y,z)Y(a,\bf x)Y(b,\bf y) - p(a,b)\Delta(-\bf y+\bf x,\bf z)Y(b,\bf y)Y(a,\bf x)\\
        &=\sum_{\bf k\geq \bf 0}(-1)^N\Omega_{\bf z}^{\bf k}(\bf x-\bf y)^{\bf k}[Y(a,\bf x),Y(b,\bf y)]\\
        &\qquad +\bf z^{\bf k}\partial_{\bf y}^{(\bf k)}\Delta_-(\bf x,\bf y)Y(a,\bf x)Y(b,\bf y) + p(a,b)\bf z^{\bf k}\partial_{\bf y}^{(\bf k)}\Delta_+(\bf x,\bf y)Y(b,\bf y)Y(a,\bf x).
    \end{align*}
    By the commutator formula we have
    \begin{equation}
        (\bf x-\bf y)^{\bf k}[Y(a,\bf x),Y(b,\bf y)] = \sum_{\bf j\geq \bf 0}\partial^{(\bf j)}_{\bf y}\Delta(\bf x,\bf y)Y(a_{(\bf k+\bf j)}b,\bf y).
    \end{equation}
    A straightforward expansion shows that
    \begin{equation}
    \begin{aligned}
        &\partial_{\bf y}^{(\bf k)}\Delta_-(\bf x,\bf y)Y(a,\bf x)Y(b,\bf y) + p(a,b)\partial_{\bf y}^{(\bf k)}\Delta_+(\bf x,\bf y)Y(b,\bf y)Y(a,\bf x)\\ 
        &= \sum_{\bf 0\leq \bf j\leq \bf k}\partial_{\bf y}^{(\bf j)}\Delta(\bf x,\bf y)Y(a,\bf y)_{(-\bf 1-\bf k+\bf j)}Y(b,\bf y)\\
        &= \sum_{\bf 0\leq \bf j\leq \bf k}\partial_{\bf y}^{(\bf j)}\Delta(\bf x,\bf y)Y(a_{(-\bf 1-\bf k+\bf j)}b,\bf y),
    \end{aligned}
    \end{equation}
    and the sum on the right hand side is in fact finite. Combining these two equations gives us the Jacobi identity.
    % The right-hand side can be written as
    % \begin{align*}
    %     &\Delta(x-z,y)Y(Y(a,z)b,y)\\
    %     &= \sum_{j,k\geq 0}\partial_y^{(j)}\Delta(x,y)\left(z^k Y(a_{(-1-k+j)}b,y) + \Omega^{\bf k}_{\bf z}Y(a_{(k+j)}b,y)\right)\\
    %     &= \sum_{j,k\geq 0}z^k\partial_y^{(j)}\Delta(x,y) Y(a_{(-1-k+j)}b,y) + (-1)^N\Omega^{\bf k}_{\bf z}\partial_y^{(j)}\Delta(x,y)Y(a_{(k+j)}b,y).
    % \end{align*}
    % Comparing both sides and using the first two identities we showed establishes the Jacobi identity.
\end{proof}

\subsection{Equivalence of various axioms for vertex algebras}\label{subsec:equivdefn}
Here we outline some equivalent axiomatic presentations so that one may arrive at a CVA in different ways.

\subsubsection{The case of characteristic 0}
We start by showing how to replace axioms V2, V3, and V4 with an alternative set of axioms when $R$ is a $\Q$-algebra, similar to \cite{kac1998vertex} and \cite{frenkel2004vertex}.
\begin{prop}\label{prop:Qalg}
    Let $R$ be a $\Q$-algebra. Then a cohomological vertex algebra $V$ is equivalent to the data $(V,Y,\bf T,\mathds 1)$, where $\bf T = (T_1,\ldots,T_n)\subset \End(V)$ is a collection of degree 0 endomorphisms for all $i$ such that the following hold:
    \begin{enumerate}
        \item For all $a,b\in V$, we have $Y(a,\bf z)b\in V\langle\!\langle \bf z\rangle\!\rangle$.
        \item For all $a\in V$ the series $Y(a,\bf z)\mathds 1$ is a power series such that the constant term is $a$. That is, we have $Y(a,\bf z)\mathds 1|_{\bf z=\bf 0} = a$.
        \item For all $i$ we have $[T_i,Y(a,\bf z)] = \partial_{z_i}Y(a,\bf z)$.
        \item For all $i$ we have $T_i\mathds 1 = 0$.
        \item $Y(a,\bf z)$ and $Y(b,\bf w)$ are mutually local for all $a,b\in V$.
    \end{enumerate}
\end{prop}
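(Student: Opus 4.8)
The plan is to prove the equivalence in both directions, the easy direction being that a CVA in the sense of Definition \ref{defn:CVA} gives rise to the data $(V,Y,\bf T,\mathds 1)$ satisfying (1)--(5). For this direction, set $T_i = T^{(\bf e_i)}$. Condition (1) is just axiom V1; condition (2) follows from V3 since $Y(a,\bf z)\mathds 1 = e^{\bf z\cdot\bf T}a$ has constant term $T^{(\bf 0)}a = a$; condition (4) follows from $T_i\mathds 1 = T^{(\bf e_i)}\mathds 1$, which vanishes because $e^{\bf w\cdot\bf T}\mathds 1 = Y(\mathds 1,\bf w)\mathds 1 = \mathds 1$ by V2 and V3, so all positive-degree coefficients of $e^{\bf w\cdot\bf T}\mathds 1$ are zero. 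Condition (5) is V5. For condition (3), apply $\partial_{w_i}$ to the translation covariance identity $e^{\bf w\cdot\bf T}Y(a,\bf z)e^{-\bf w\cdot\bf T} = Y(a,\bf z+\bf w)$ and set $\bf w = \bf 0$: the left side gives $[T_i, Y(a,\bf z)]$ using Lemma \ref{lem:exp} (so that $\partial_{w_i}e^{\bf w\cdot\bf T}|_{\bf w=0} = T_i$ and $\partial_{w_i}e^{-\bf w\cdot\bf T}|_{\bf w=0} = -T_i$), and the right side gives $\partial_{z_i}Y(a,\bf z)$ by Corollary \ref{cor:trans} together with the fact that $\partial_{w_i}Y(a,\bf z+\bf w)|_{\bf w=0} = \partial_{z_i}Y(a,\bf z)$ (which follows from $i_{\bf z,\bf w}Y(a,\bf z+\bf w) = \sum_{\bf k}\bf w^{\bf k}\partial_{\bf z}^{(\bf k)}Y(a,\bf z)$ and $\partial_{z_i} = \partial_{\bf z}^{(\bf e_i)}$ since $R$ is a $\Q$-algebra).

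For the converse, suppose we are given $(V,Y,\bf T,\mathds 1)$ satisfying (1)--(5); we must reconstruct the full CVA data and verify V1--V5. Since $R$ is a $\Q$-algebra, the $T_i$ pairwise commute: from (3), $[T_i,[T_j,Y(a,\bf z)]] = \partial_{z_i}\partial_{z_j}Y(a,\bf z)$ is symmetric in $i,j$, so $[[T_i,T_j],Y(a,\bf z)] = 0$ for all $a$; applying this to $\mathds 1$ and using (2), (4) gives $[T_i,T_j]\mathds 1 = 0$ and then the uniqueness portion of the argument (or a direct check using $[T_i,T_j]Y(a,\bf z)\mathds 1 = Y(a,\bf z)[T_i,T_j]\mathds 1 = 0$ and (2)) shows $[T_i,T_j] = 0$ on all of $V$. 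Hence we may define $T^{(\bf k)} = \frac{1}{\bf k!}\bf T^{\bf k} = \prod_i \frac{1}{k_i!}T_i^{k_i}$ and set $e^{\bf z\cdot\bf T} = \sum_{\bf k\geq\bf 0}\bf z^{\bf k}T^{(\bf k)}$; condition (4) gives $e^{\bf z\cdot\bf T}\mathds 1 = \mathds 1$, which is needed for V2 below. Iterating (3) yields $[T^{(\bf k)},Y(a,\bf z)] = \partial_{\bf z}^{(\bf k)}Y(a,\bf z)$, and exponentiating (a formal manipulation valid over a $\Q$-algebra) gives translation covariance $e^{\bf w\cdot\bf T}Y(a,\bf z)e^{-\bf w\cdot\bf T} = \sum_{\bf k}\bf w^{\bf k}\partial_{\bf z}^{(\bf k)}Y(a,\bf z) = Y(a,\bf z+\bf w)$, which is V4. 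V1 is (1) and V5 is (5), so it remains to establish V2 and V3.

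For V3: by (3) and (4), $T_i Y(a,\bf z)\mathds 1 = [T_i,Y(a,\bf z)]\mathds 1 = \partial_{z_i}Y(a,\bf z)\mathds 1$, so the power series $f(\bf z) \eqdef Y(a,\bf z)\mathds 1 \in V[\![\bf z]\!]$ (it has no singular part since $Y(a,\bf z)\mathds 1 \in V\langle\!\langle\bf z\rangle\!\rangle$ and, arguing as in the classical case via the other $T_i$-equations, actually lies in $V[\![\bf z]\!]$ — one shows the singular modes $a_{(\bf j)}\mathds 1$ vanish using $T_i a_{(\bf j)}\mathds 1 = \partial$-shift relations forcing an infinite descent) satisfies $\partial_{z_i}f = T_i f$ for all $i$ with $f(\bf 0) = a$ by (2); the unique such solution over a $\Q$-algebra is $f(\bf z) = e^{\bf z\cdot\bf T}a$, giving V3. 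For V2: the field $Y(\mathds 1,\bf z)$ is mutually local with every $Y(b,\bf w)$ by (5), and by V3 (just proved) $Y(\mathds 1,\bf z)\mathds 1 = e^{\bf z\cdot\bf T}\mathds 1 = \mathds 1 = \id_V\,\mathds 1$; since $\id_V$ is trivially a field mutually local with everything and agrees with $Y(\mathds 1,\bf z)$ on $\mathds 1$, Goddard uniqueness (Proposition \ref{prop:Goddard}) gives $Y(\mathds 1,\bf z) = \id_V$. \textbf{The main obstacle} I anticipate is the careful handling of V3 — specifically, ruling out singular modes in $Y(a,\bf z)\mathds 1$ and justifying that the $T_i$-covariance relations plus the "constant term $a$" normalization pin down $Y(a,\bf z)\mathds 1 = e^{\bf z\cdot\bf T}a$ rather than just determining it up to the kernel of the $T_i$'s; in the classical VOA setting this is the standard creativity/vacuum argument, and here one must check the cohomological grading and the structure of $\cal K^{\bf z}_{\mathrm{dist}}(V)$ do not introduce new subtleties (they should not, since $\mathds 1$ is a genuine vector and the argument only uses power-series calculus in $\bf z$).
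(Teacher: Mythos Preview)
Your argument is correct, and the overall structure matches the paper's. Two remarks.

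First, the ``main obstacle'' you flag is not an obstacle at all: condition (2) explicitly states that $Y(a,\bf z)\mathds 1$ is a power series, so there are no singular modes to rule out. The ODE argument you give (solve $\partial_{z_i}f = T_if$ with $f(\bf 0)=a$ in $V[\![\bf z]\!]$) then goes through immediately, exactly as in the paper.

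Second, your derivation of V2 via Goddard uniqueness is a genuine (minor) departure from the paper. The paper instead observes that the proof of skew-symmetry (Proposition \ref{prop:skewsymmetry}) uses only V3, V4, V5, all of which are already in hand, and then computes
\[
Y(\mathds 1,\bf z)a = e^{\bf z\cdot\bf T}Y(a,-\bf z)\mathds 1 = e^{\bf z\cdot\bf T}e^{-\bf z\cdot\bf T}a = a.
\]
Your route through Goddard is equally valid: the proof of Proposition \ref{prop:Goddard} uses only locality and the fact that $Y(b,\bf w)\mathds 1$ is a power series with constant term $b$, both of which you have from (2) and (5). The skew-symmetry route is slightly more economical in that it avoids invoking a separate uniqueness principle, while your route makes the argument for V2 self-contained without needing to audit which axioms the skew-symmetry proof actually consumes.
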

\begin{proof}
    
    Axioms (1) and (5) are identical to V1 and V5 respectively.
    Note that (3) implies that
    \begin{align*}
        [[T_i,T_j],Y(a,\bf z)] &= [T_i,[T_j,Y(a,\bf z)]] - [T_j,[T_i,Y(a,\bf z)]]\\
        &= \partial_{z_i}\partial_{z_j}Y(a,\bf z) - \partial_{z_j}\partial_{z_i}Y(a,\bf z) = 0.
    \end{align*}
    Apply both sides to $\mathds 1$ and set $z=0$ to get that $[T_i,T_j]a = 0$ by (4) and (2).
    Therefore $[T_i,T_j]=0$.

    It is straightforward to see that $Y(a,\bf z)\mathds 1 = e^{\bf z\cdot \bf T}a$ if and only if $Y(a,\bf z)\mathds 1|_{z=0}=a$ and $\partial_{z_i}Y(a,\bf z)\mathds 1 = T_iY(a,\bf z)\mathds 1$. The initial condition follows from (2), and the derivative identity follows from (3). This shows V3, and we define $T^{(\bf k)} = \frac{1}{\bf k!}\bf T^{\bf k}$.
    
    Note that $[T_i,T_j]=0$ for all $i,j$, so V4 is equivalent to
    \[e^{\bf w\cdot \bf T}Y(a,\bf z)e^{-\bf w\cdot \bf T}=\sum_{k\geq 0}\frac{\bf w^{\bf k}}{\bf k!}\prod_{i=1}^n(\ad_{T_i})^{k_i}Y(a,\bf z) = \sum_{\bf k\geq \bf 0}\bf w^{\bf k}\partial^{(\bf k)}_{\bf z}Y(a,\bf z),\]
    hence V4 follows from the first property of (3).

    We note that the proof of skew-symmetry only uses V3, V4, and V5. Thus we have
    \[Y(\mathds 1,\bf z)a = e^{\bf z\cdot \bf T}Y(a,-\bf z)\mathds 1 = e^{\bf z\cdot \bf T}e^{-\bf z\cdot \bf T}a=a.\]
    This shows V2, implying that the set of axioms (1)-(5) imply V1-V5. The other direction is clear.
\end{proof}

\subsubsection{Equivalent presentations of the Jacobi identity}
Assuming axiom V1, the Jacobi identity is equivalent to V5 and the iterate formula. Moreover, we obtain the following:
\begin{prop}
    The Jacobi identity for cohomological vertex algebras is equivalent to weak commutativity (V5) together with weak associativity (\ref{prop:weakassoc}), or equivalently the associativity relation (\ref{prop:assoc}).
\end{prop}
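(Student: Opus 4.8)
The plan is to establish the cycle of implications
\[
\text{Jacobi identity} \implies \text{V5 + weak associativity} \implies \text{V5 + associativity} \implies \text{Jacobi identity},
\]
working always under the standing assumption of axiom V1 (so that all the residue operations and generating-function manipulations make sense). The equivalence of weak associativity and the associativity relation was already noted in the discussion preceding Proposition \ref{prop:assoc} and its proof, so the middle step is essentially free; the real content is the outer two implications. Throughout I would mimic the classical passage between the Jacobi identity and the commutativity/associativity package for vertex algebras (as in \cite{LepowskyLi}, \cite{FLM}), substituting the $\Delta$-distributions for the usual $\delta$-distributions and carrying the cohomological signs $p(a,b)$ and the factors $(-1)^N$ through each step. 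The key computational inputs are Lemma \ref{lem:deltaidentities} (especially properties (2), (3), (4), (5)) and the three-variable identities $\Delta(\bf x-\bf z,\bf y) = (-1)^N\Delta(\bf y+\bf z,\bf x)$ and $\Delta(\bf x-\bf z,\bf y) = \Delta(\bf x-\bf y,\bf z) - \Delta(-\bf y+\bf x,\bf z)$.

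First, \emph{Jacobi $\implies$ V5 + weak associativity.} To recover weak commutativity, I would take $\Res_{\bf z}$ of both sides of the Jacobi identity after multiplying by an appropriate power $(\bf x - \bf y)^{\bf k}$; using $\Res_{\bf z}(\Delta(\bf x - \bf y, \bf z) f(\bf z)) = f(\bf x - \bf y)$-type contractions from Lemma \ref{lem:deltaidentities}(2) and the support properties of the $\Delta$-distributions one extracts the commutator formula of Corollary \ref{cor:commutator formula}, which is exactly a repackaging of V5 (mutual locality with a uniform pole bound). To recover weak associativity, I would instead take $\Res_{\bf x}$ after multiplying by $(\bf x - \bf z)^{\bf k}$ (or equivalently read off the coefficient of the appropriate $\Omega$-symbol in the $\bf y$-expansion), which collapses the left side into $Y(a,\bf z)Y(b,\bf y)c$-type terms and the right into $Y(Y(a,\bf z)b,\bf y)c$; rearranging via $\Delta(\bf x - \bf z, \bf y) = \Delta(\bf x - \bf y, \bf z) - \Delta(-\bf y + \bf x, \bf z)$ produces precisely the local expansion asserted by Proposition \ref{prop:weakassoc}, with the pole bound inherited from the uniform bound in the Jacobi identity.

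Second, \emph{V5 + associativity $\implies$ Jacobi.} Here I would essentially reverse the proof of Theorem \ref{thm:Jacobi} already given in the excerpt. That proof shows that, given V5 (hence the commutator formula via Proposition \ref{prop:iterate}, which itself followed from associativity) plus the iterate formula (again from associativity), the two sides of the Jacobi identity agree term-by-term after expanding everything in the variable $\bf z$ against the $\Omega_{\bf z}^{\bf k}$ and $\bf z^{\bf k}$ monomials. So the task reduces to checking that V5 together with the associativity relation (Proposition \ref{prop:assoc}) suffices to derive \emph{both} the commutator formula and the iterate formula; the excerpt already derives the iterate formula from associativity (Proposition \ref{prop:iterate}) and the commutator formula from the iterate formula (Corollary \ref{cor:commutator formula}), so the chain closes. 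One then simply invokes the computation in the proof of Theorem \ref{thm:Jacobi} verbatim.

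\textbf{Main obstacle.} The conceptual structure is routine once one trusts the classical analogy, so the real friction is bookkeeping: ensuring that the cohomological signs $(-1)^N$, $(-1)^{|\bf k|}$, and the Koszul signs $p(a,b)$ attached to swapping $Y(a,\bf z)$ past $Y(b,\bf y)$ are tracked consistently through the residue manipulations and the re-expansions in different domains $R\langle\!\langle \bf z\rangle\!\rangle\langle\!\langle\bf w\rangle\!\rangle$ versus $R\langle\!\langle \bf w\rangle\!\rangle\langle\!\langle\bf z\rangle\!\rangle$, and that the uniform pole bounds (the existence of a single $\bf K \geq \bf 0$ independent of $c$) are genuinely preserved under each implication rather than merely pointwise bounds. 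A secondary subtlety is that, unlike for vertex algebras, property (5) of Lemma \ref{lem:deltaidentities} gives $(\bf z - \bf w)^{\bf j}\partial^{(\bf k)}_{\bf w}\Delta_\pm(\bf z,\bf w) = \partial^{(\bf k - \bf j)}_{\bf w}\Delta_\pm(\bf z,\bf w)$ only for $\bf k \geq \bf j$ and zero otherwise (the $\Delta$-distributions are genuinely annihilated by $(\bf z - \bf w)$, not inverted), so one must be careful that the extraction-by-residues arguments only ever multiply by powers that truncate rather than shift indefinitely — but this is exactly the situation already handled in the proofs of Proposition \ref{prop:localanalytic} and Corollary \ref{cor:commutator formula}, so no new ideas are needed.
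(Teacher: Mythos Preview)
Your proposal is correct and the logical chain closes, but the paper takes a shorter route. The paper has already noted, just before the proposition, that the Jacobi identity is equivalent to V5 together with the iterate formula (this is what the proof of Theorem \ref{thm:Jacobi} establishes in one direction, and the converse is the standard residue extraction you describe). Given that, the only missing link is ``iterate formula $\implies$ weak associativity,'' and the paper proves this in two lines: rewrite the iterate formula as
\[
Y(a,\bf z-\bf w)Y(b,-\bf w) - Y(Y(a,\bf z)b,-\bf w) = p(a,b)Y(b,-\bf w)\bigl(Y(a,\bf z-\bf w)-Y(a,-\bf w+\bf z)\bigr),
\]
observe that the right-hand side is a finite sum $\sum_{\bf k}\partial^{(\bf k)}_{\bf w}\Delta(\bf z,\bf w)Y(b,-\bf w)a_{(\bf k)}$, and apply to $c$ --- the number of terms depends only on $a,c$, which is exactly the uniform locality statement of weak associativity.

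Your approach of extracting weak associativity directly from the Jacobi identity via $\Res_{\bf x}$ after multiplying by $(\bf x-\bf z)^{\bf k}$ works too, but it duplicates effort: you would essentially be re-deriving the iterate formula inside that residue computation. The paper's shortcut avoids that by recognizing that the iterate formula (already derived from associativity in Proposition \ref{prop:iterate}) \emph{is} weak associativity once you rearrange terms. What your approach buys is self-containedness --- you never explicitly invoke the prior remark that Jacobi $\Leftrightarrow$ V5 + iterate --- at the cost of a longer argument.
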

\begin{proof}
    We just need to show that the iterate formula implies weak associativity. 
    The iterate formula reads as
    \begin{align*}
        & Y(a,\bf z-\bf w)Y(b,-\bf w) - Y(Y(a,\bf z)b,-\bf w) \\
        & =  p(a,b)Y(b,-\bf w)\left(Y(a,\bf z-\bf w)-Y(a,-\bf w+\bf z)\right)\\
        & =  (-1)^{p(b)(p(a)+N)}\sum_{\bf k\geq \bf 0}\partial^{(\bf k)}_{\bf w}\Delta(\bf z,\bf w)Y(b,-\bf w)a_{(\bf k)}.
    \end{align*}
    Applying an element $c\in V$, the number of terms on the right-hand side depends only on $a$ and $c$, so weak associativity follows.
    % Applying both sides to $\mathds 1$, by V3 we get
    % \[Y(a,\bf z-\bf w)e^{-\bf w\cdot \bf T}b - e^{-\bf w\cdot \bf T}Y(a,z)b =  p(a,b)Y(b,-w)\left(e^{(\bf z-\bf w)\cdot \bf T}-e^{(-\bf w+\bf z)\cdot \bf T}\right)a = 0.\]
    % Thus we have V4 since skew-symmetry together with V2 and V3 imply $e^{\bf w\cdot \bf T}e^{-\bf w\cdot \bf T}=\id_V$.
    % Now note that
    % \[\Delta_-(-w,-z) = \sum_{k\geq 0}(-z)^k\Omega_{-w}^k = -\Delta_+(\bf z,\bf w).\]
    % Hence we have
    % \begin{align*}
    %     Y(a,\bf z-\bf w)-Y(a,-\bf w+\bf z) &= \sum_{k\geq 0}(\partial^{(k)}_w\Delta_-(\bf z,\bf w) - \partial^{(k)}_{-z}\Delta_-(-w,-z) )a_{(k)}\\
    %     &= \sum_{k\geq 0}(\partial^{(k)}_w\Delta_-(\bf z,\bf w) + \partial^{(k)}_{w}\Delta_+(\bf z,\bf w) )a_{(k)}\\
    %     &= \sum_{k\geq 0}\Delta(\bf z,\bf w)a_{(k)}.
    % \end{align*}
    % Commuting $Y(b,-w)$ past the $\Delta(\bf z,\bf w)$ shows that for all $c\in V$ the expression 
    % \[Y(a,\bf z-\bf w)Y(b,-w)c - Y(Y(a,z)b,-w)c\]
    % is local in $\cal K^{z,w}_{\mathrm{dist}}\otimes_RV$, which shows weak associativity. 
    % Applying the proof of weak associativity (using skew-symmetry and V4) again we have that
    % \[Y(a,\bf z-\bf w)Y(b,-w)c - Y(Y(a,z)b,-w)c = p(b,c)e^{-\bf w\cdot \bf T}[Y(a,z),Y(c,w)]b.\]
    % From this we use the iterate formula again to conclude that
    % \begin{align*}
    %     p(b,c)e^{-\bf w\cdot \bf T}[Y(a,z),Y(c,w)]b &= p(a,b)Y(b,-w)\left(Y(a,\bf z-\bf w)-Y(a,-\bf w+\bf z)\right)c,
    % \end{align*}
    % and from $e^{\bf w\cdot \bf T}e^{-\bf w\cdot \bf T}=\id_V$ we obtain the commutator formula after applying skew-symmetry, which by V1 implies V5.
\end{proof}

\subsubsection{Replacing translation covariance and weak commutativity}
Now we discuss how to replace axioms V4 and V5 of a CVA.
To start, we provide two lemmas.
\begin{lem}
    Skew-symmetry, V2, and V3 together imply that $e^{\bf z\cdot \bf T}e^{-\bf z\cdot \bf T}=\id_V$.
\end{lem}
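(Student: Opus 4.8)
The plan is to simply evaluate the skew-symmetry identity at $a = \mathds 1$ and feed in axioms V2 and V3. Since $\mathds 1 \in V^0$ is homogeneous of degree $0$, for any homogeneous $b \in V$ we have $p(\mathds 1, b) = 1$, so skew-symmetry reads
\[
Y(\mathds 1, \bf z)b = e^{\bf z\cdot \bf T}\, Y(b, -\bf z)\,\mathds 1 .
\]
By V2 the left-hand side is just $b$. For the right-hand side, V3 gives $Y(b, \bf z)\mathds 1 = e^{\bf z\cdot \bf T}b = \sum_{\bf j \geq \bf 0}\bf z^{\bf j}T^{(\bf j)}b$, which is a genuine power series with no $\Omega$-form contributions; substituting $\bf z \mapsto -\bf z$ therefore yields $Y(b,-\bf z)\mathds 1 = \sum_{\bf j \geq \bf 0}(-\bf z)^{\bf j}T^{(\bf j)}b = e^{-\bf z\cdot \bf T}b$ with no sign subtleties from the convention $\Omega^{\bf m}_{-\bf z} = (-1)^{|\bf m|+N+1}\Omega^{\bf m}_{\bf z}$. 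Combining, $b = e^{\bf z\cdot \bf T}e^{-\bf z\cdot \bf T}b$ for every homogeneous $b$, and since the homogeneous elements span $V$ this extends $R$-linearly to give $e^{\bf z\cdot \bf T}e^{-\bf z\cdot \bf T} = \id_V$.

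There is essentially no obstacle here; the only point requiring a word of care is the substitution $\bf z \mapsto -\bf z$ inside $Y(b,-\bf z)\mathds 1$, which is unambiguous precisely because $Y(b,\bf z)\mathds 1$ lies in $V[\![\bf z]\!]$ rather than in the full distribution space, so one never has to invoke the negative-index convention for the $\Omega$-forms. I would state this explicitly to preempt the reader worrying that $Y(b,-\bf z)$ might require the Taylor-expansion map $i_{\bf z,\bf w}$ or the sign rule \eqref{eq:negativemodedef}.

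One remark worth including: this lemma shows that the relation $e^{\bf z\cdot \bf T}e^{-\bf z\cdot \bf T} = \id_V$, which was previously deduced from Lemma \ref{lem:exp} (hence from V2, V3, V4), in fact does not need the full translation covariance axiom V4 once skew-symmetry is available. This is the relevant input for the alternative axiomatizations discussed in this subsection, where V4 and V5 are to be replaced.
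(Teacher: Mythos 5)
Your proof is correct and is essentially the paper's own argument: set $a=\mathds 1$ in skew-symmetry, use V2 on the left and V3 on the right, and read off $b = e^{\bf z\cdot \bf T}e^{-\bf z\cdot \bf T}b$. The extra remarks about the substitution $\bf z\mapsto -\bf z$ being unambiguous (since $Y(b,\bf z)\mathds 1$ is a genuine power series) and about the role of this lemma in replacing V4 are accurate but not needed for the verification.
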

\begin{proof}
    Take $a=\mathds 1$, and apply any $b\in V$:
    \[Y(\mathds 1,\bf z)b = b = e^{\bf z\cdot \bf T}Y(b,-\bf z)\mathds 1 = e^{\bf z\cdot \bf T}e^{-\bf z\cdot \bf T}b.\]
\end{proof}
\begin{lem}
    Weak associativity, V3, and $e^{\bf z\cdot \bf T}e^{-\bf z\cdot \bf T}=\id_V$ together imply V4.
\end{lem}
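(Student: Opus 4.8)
The plan is to recover axiom V4, namely $e^{\mathbf w\cdot\mathbf T}Y(a,\mathbf z)e^{-\mathbf w\cdot\mathbf T} = i_{\mathbf z,\mathbf w}Y(a,\mathbf z+\mathbf w)$, starting only from weak associativity (Proposition \ref{prop:weakassoc}), axiom V3, and the relation $e^{\mathbf z\cdot\mathbf T}e^{-\mathbf z\cdot\mathbf T}=\id_V$. Recall weak associativity says that $Y(a,\mathbf z-\mathbf w)Y(b,-\mathbf w)c$ and $Y(Y(a,\mathbf z)b,-\mathbf w)c$ are the two expansions of a single element, and in particular differ by a local distribution supported on $\mathbf z=\mathbf w$. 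The strategy is to feed the vacuum $b=\mathds 1$ into this, and then use V3 to rewrite $Y(a,\mathbf z)\mathds 1 = e^{\mathbf z\cdot\mathbf T}a$. After this substitution, $Y(Y(a,\mathbf z)b,-\mathbf w)c$ should collapse to $Y(e^{\mathbf z\cdot\mathbf T}a,-\mathbf w)c$ (using the definition of $Y(Y(a,\mathbf z)b,\mathbf w)$ and linearity of $Y$), while $Y(a,\mathbf z-\mathbf w)Y(\mathds 1,-\mathbf w)c = Y(a,\mathbf z-\mathbf w)c$.

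The key step is then to understand the "defect" term. Since $b=\mathds 1$ makes everything a power series in $\mathbf z$ (the singular $\Omega_{\mathbf z}^{\mathbf k}$ modes of $Y(\mathds 1,\mathbf z)$ vanish by V2, which we can get from skew-symmetry, V2, V3 — or we may simply assume V2 is in force as part of the CVA data being reconstructed), the local part proportional to $\Delta(\mathbf z,\mathbf w)$ and its derivatives must be zero: $\Delta_{\pm}$ has a genuine $\Omega$-component, so it cannot appear in an honest power-series identity. Hence weak associativity with the vacuum gives the exact equality
\begin{equation*}
    Y(a,\mathbf z-\mathbf w)c \;=\; Y(e^{\mathbf z\cdot\mathbf T}a,-\mathbf w)c
\end{equation*}
inside $V\langle\!\langle\mathbf z\rangle\!\rangle\langle\!\langle\mathbf w\rangle\!\rangle$ (after the appropriate variable substitution, or directly as a two-variable formal statement). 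Now perform the formal substitution $\mathbf z\mapsto \mathbf z+\mathbf w$: since these are power series in $\mathbf w$ the substitution is well-defined, and one gets $Y(a,\mathbf z)c = Y(e^{(\mathbf z+\mathbf w)\cdot\mathbf T}a,-\mathbf w)c$. Applying Lemma \ref{lem:exp} in the form $e^{(\mathbf z+\mathbf w)\cdot\mathbf T}=e^{\mathbf w\cdot\mathbf T}e^{\mathbf z\cdot\mathbf T}$ and then the hypothesis $e^{-\mathbf w\cdot\mathbf T}e^{\mathbf w\cdot\mathbf T}=\id_V$ should let us isolate $Y(e^{\mathbf w\cdot\mathbf T}a,\mathbf z)$ in terms of $Y(a,\mathbf z+\mathbf w)$ and the translation operators, which is exactly Corollary \ref{cor:trans}; combining with a conjugation argument (or reading the statement at $a\mapsto e^{-\mathbf w\cdot\mathbf T}a$) yields V4.

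The main obstacle I anticipate is the bookkeeping around the formal substitutions and the two alternative expansion domains: one has to be careful that "setting $b=\mathds 1$" really does kill the $\Delta$-type correction term rather than merely simplifying it, and that the substitution $\mathbf z\mapsto\mathbf z+\mathbf w$ (equivalently $\mathbf x=\mathbf z-\mathbf w$, $\mathbf y=-\mathbf w$) is being applied in the region where it converges formally. The cleanest route is probably to phrase everything in $\mathcal K^{\mathbf x,\mathbf y,\mathbf x-\mathbf y}(V)$, transport the weak-associativity identity there, specialize $b=\mathds 1$, observe that the image has no $\Omega_{\mathbf x-\mathbf y}^{\mathbf k}$ component, and then read off the two ordinary-expansion statements. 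A secondary subtlety is that the precise statement of V4 involves the operator conjugation $e^{\mathbf w\cdot\mathbf T}Y(a,\mathbf z)e^{-\mathbf w\cdot\mathbf T}$, not just $Y(e^{\mathbf w\cdot\mathbf T}a,\mathbf z)$; bridging these requires knowing $Y(e^{\mathbf w\cdot\mathbf T}a,\mathbf z) = Y(a,\mathbf z+\mathbf w) = e^{\mathbf w\cdot\mathbf T}Y(a,\mathbf z)e^{-\mathbf w\cdot\mathbf T}$, so one should check which of these equalities is genuinely being proved here versus taken as the definition of $Y(a,\mathbf z+\mathbf w)$, and invoke skew-symmetry (which by the earlier remark uses only V3, V4-free inputs here — or rather, one must be cautious not to circularly invoke skew-symmetry, whose proof used V4; instead argue directly from weak associativity as above).
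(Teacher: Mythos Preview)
Your specialization is the wrong one: the paper sets $c=\mathds 1$, not $b=\mathds 1$. With $c=\mathds 1$, axiom V3 alone gives
\[
C(\mathbf z,\mathbf w)=Y(a,\mathbf z-\mathbf w)Y(b,-\mathbf w)\mathds 1 - Y(Y(a,\mathbf z)b,-\mathbf w)\mathds 1 = Y(a,\mathbf z-\mathbf w)e^{-\mathbf w\cdot\mathbf T}b - e^{-\mathbf w\cdot\mathbf T}Y(a,\mathbf z)b,
\]
and both terms on the right are honest power series in $\mathbf w$. A local series $\sum_{\mathbf j}\partial_{\mathbf w}^{(\mathbf j)}\Delta(\mathbf z,\mathbf w)g^{\mathbf j}(\mathbf w)$ always carries nontrivial $\Omega_{\mathbf w}$-components through $\Delta_+$, so if it is simultaneously a power series in $\mathbf w$ it must vanish identically. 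Hence $C=0$, i.e.\ $e^{-\mathbf w\cdot\mathbf T}Y(a,\mathbf z)=Y(a,\mathbf z-\mathbf w)e^{-\mathbf w\cdot\mathbf T}$ as operators; one use of the hypothesis $e^{\mathbf w\cdot\mathbf T}e^{-\mathbf w\cdot\mathbf T}=\id_V$ and the substitution $\mathbf z\mapsto\mathbf z+\mathbf w$ then give V4 directly.

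Your route with $b=\mathds 1$ has two genuine gaps. First, the claim that ``everything becomes a power series in $\mathbf z$'' is false: $Y(a,\mathbf z-\mathbf w)c=\sum_{\mathbf k}(-\mathbf w)^{\mathbf k}\partial_{\mathbf z}^{(\mathbf k)}Y(a,\mathbf z)c$ still carries the $\Omega_{\mathbf z}$-singularities of $Y(a,\mathbf z)c$ for a general $c$, while $Y(e^{\mathbf z\cdot\mathbf T}a,-\mathbf w)c$ carries $\Omega_{\mathbf w}$-terms, so the difference is a power series in neither variable and the locality-forces-zero step does not apply. (Your simplification also needs V2, which is not among the stated hypotheses.) Second, even if you obtained $Y(a,\mathbf z-\mathbf w)c=Y(e^{\mathbf z\cdot\mathbf T}a,-\mathbf w)c$, that is Corollary~\ref{cor:trans}, not V4; your proposed bridge via Lemma~\ref{lem:exp} is circular, since the proof of that lemma already uses V4. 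The paper's choice $c=\mathds 1$ sidesteps both issues: V3 handles the two terms symmetrically, and the resulting identity is already the conjugation form of V4.
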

\begin{proof}
    Take $c=\mathds 1$. By weak associativity, the series
    \begin{align*}
        C(\bf z,\bf w) &= Y(a,\bf z-\bf w)Y(b,-\bf w)\mathds 1 - Y(Y(a,\bf z)b,-\bf w)\mathds 1\\
        & = Y(a,\bf z-\bf w)e^{-\bf w\cdot \bf T}b - e^{-\bf w\cdot \bf T}Y(a,\bf z)b.
    \end{align*}
    By weak associativity this expression is local, meaning that it can be written as a sum of terms of the form $\partial^{(\bf k)}_{\bf w}\Delta(\bf z,\bf w)g^{\bf j}(\bf w)$ for various series $g^{\bf j}(\bf w)$. Note that $g^{\bf j}(\bf w) = \Res_{\bf z}\left((\bf z-\bf w)^{\bf j}C(\bf z,\bf w)\right)$. Since $C(\bf z,\bf w)$ is a power series in $w$, these residues are all 0. This implies that $C(\bf z,\bf w) = 0$, which shows V4.
\end{proof}
We now provide our first reformulation of a cohomological vertex algebra:
\begin{prop}
    The axioms V4 and V5 of a cohomological vertex algebra may be replaced by skew-symmetry and weak associativity.
\end{prop}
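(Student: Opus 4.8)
The plan is to show the two axiom systems are equivalent by checking each direction, relying heavily on the lemmas just proved. Since a CVA clearly satisfies skew-symmetry (Proposition \ref{prop:skewsymmetry}) and weak associativity (Proposition \ref{prop:weakassoc}), one direction is immediate. For the converse, suppose we are given $(V,Y,e^{\bf z\cdot\bf T},\mathds 1)$ satisfying V1, V2, V3, skew-symmetry, and weak associativity; we must recover V4 and V5. The first observation is that the previous two lemmas already do most of the work: skew-symmetry together with V2 and V3 gives $e^{\bf z\cdot\bf T}e^{-\bf z\cdot\bf T}=\id_V$, and then weak associativity together with V3 and this identity gives V4. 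So the only remaining task is to derive V5 (weak commutativity), i.e.\ mutual locality of all pairs $Y(a,\bf z),Y(b,\bf w)$.

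To obtain V5 I would run the argument of the weak-associativity proof (Proposition \ref{prop:weakassoc}) in reverse. Once V4 is available, for homogeneous $a,b,c$ skew-symmetry lets us rewrite
\[
Y(a,\bf z-\bf w)Y(b,-\bf w)c-Y(Y(a,\bf z)b,-\bf w)c = p(b,c)e^{-\bf w\cdot\bf T}[Y(a,\bf z),Y(c,\bf w)]b,
\]
exactly as in that proof. The left-hand side is local in $(\bf z,\bf w)$ by hypothesis, and multiplying a local series by the power series $e^{\bf w\cdot\bf T}$ preserves locality (and the uniform bound on the number of terms), so $[Y(a,\bf z),Y(c,\bf w)]b$ is local for every $b,c$ — but $a$ and $c$ are arbitrary, so after relabeling this is precisely mutual locality of $Y(a,\bf z)$ and $Y(b,\bf w)$ for all $a,b$, which is V5. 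One must check the uniform bound on the pole order holds (it does, since conjugation by the power series $e^{\bf w\cdot\bf T}$ does not change which powers of $\Delta$ appear), and that the argument covers non-homogeneous elements by bilinearity.

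The main obstacle is really a bookkeeping one: making sure that after the substitution of variables the expression $Y(a,\bf z-\bf w)Y(b,-\bf w)c$ is interpreted consistently (via $i_{\bf z,\bf w}$, as stipulated after V4) and that the manipulation $Y(a,\bf z-\bf w)e^{-\bf w\cdot\bf T}=e^{-\bf w\cdot\bf T}Y(a,\bf z)$ — which uses V4 — is legitimate in the relevant completion. Once V4 has been established from the two lemmas, this is routine. I would write the proof as: (i) cite the two preceding lemmas to get $e^{\bf z\cdot\bf T}e^{-\bf z\cdot\bf T}=\id_V$ and hence V4; (ii) run the Proposition \ref{prop:weakassoc} computation backwards to extract locality of $[Y(a,\bf z),Y(c,\bf w)]$ from locality of the associator, obtaining V5; (iii) note the reverse implication is Propositions \ref{prop:skewsymmetry} and \ref{prop:weakassoc}. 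No step should require more than a few lines.
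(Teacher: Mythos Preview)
Your proposal is correct and follows essentially the same route as the paper: use the two preceding lemmas to recover V4 from skew-symmetry and weak associativity, then rerun the computation from Proposition~\ref{prop:weakassoc} to extract locality of $[Y(a,\bf z),Y(c,\bf w)]b$ (with bound uniform in $b$) from weak associativity, giving V5. The only point to state a touch more carefully is that the uniform bound on the number of $\Delta$-terms comes directly from the uniformity in $b$ built into the statement of weak associativity, not merely from the fact that conjugation by $e^{\bf w\cdot\bf T}$ preserves the set of $\Delta$-derivatives appearing.
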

\begin{proof}
    Skew-symmetry implies $e^{\bf z\cdot \bf T}e^{-\bf z\cdot \bf T}=\id_V$, which together with weak associativity implies V4. We can now carry out the proof of weak associativity to obtain
    \[Y(a,\bf z-\bf w)Y(b,-\bf w)c - Y(Y(a,\bf z)b,-\bf w)c = p(b,c)e^{-\bf w\cdot \bf T}[Y(a,\bf z),Y(c,\bf w)]b.\]
    Both sides of the equation are local by weak associativity, and since applying $e^{\bf w\cdot \bf T}$ preserves locality, the bracket $[Y(a,\bf z),Y(c,\bf w)]b$ is local for all $b$ (with the locality independent of $b$). Therefore $[Y(a,\bf z),Y(c,\bf w)]$ is local.
\end{proof}
The most common presentation of the axioms of a vertex algebra in the literature uses the Jacobi identity, which we show may be done here as well:
\begin{prop}
    The axioms V4 and V5 of a cohomological vertex algebra may be replaced by the Jacobi identity.
\end{prop}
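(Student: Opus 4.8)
The plan is to prove the equivalence in both directions. One direction is Theorem \ref{thm:Jacobi}, which says every CVA satisfies the Jacobi identity, so the content is the converse: a tuple $(V,Y,e^{\bf z\cdot\bf T},\mathds 1)$ satisfying V1, V2, V3 and the Jacobi identity automatically satisfies V4 and V5.

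The first steps are free from the discussion above. Assuming V1, the Jacobi identity is equivalent to weak commutativity (V5) together with the iterate formula (\ref{eq:kprod1})--(\ref{eq:kprod2}), and the iterate formula implies weak associativity (Proposition \ref{prop:weakassoc}). So from V1 and the Jacobi identity I have V5, the iterate formula, and weak associativity at once; only V4 remains to be produced.

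To obtain V4 I would bootstrap the translation operator from the iterate formula. Specializing (\ref{eq:kprod2}) to $b=\mathds 1$ and using V2 gives $Y(T^{(\bf k)}a,\bf z)=\partial^{(\bf k)}_{\bf z}Y(a,\bf z)$ for all $\bf k\geq\bf 0$, where $a_{(-\bf 1-\bf k)}\mathds 1=T^{(\bf k)}a$ by V3. Applying this identity to $\mathds 1$ and comparing coefficients in $\bf z$ (again by V3) yields the product relations $T^{(\bf l)}T^{(\bf k)}=\binom{\bf l+\bf k}{\bf k}T^{(\bf l+\bf k)}$ for all $\bf k,\bf l\geq\bf 0$ --- precisely (\ref{eq:transprod}), but now with no appeal to V4 --- and summing these into a power series gives $e^{\bf z\cdot\bf T}e^{\bf w\cdot\bf T}=e^{(\bf z+\bf w)\cdot\bf T}$, hence $e^{\bf z\cdot\bf T}e^{-\bf z\cdot\bf T}=\id_V$. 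Now the lemma proved above applies: weak associativity, V3, and $e^{\bf z\cdot\bf T}e^{-\bf z\cdot\bf T}=\id_V$ together imply V4. Thus V4 and V5 both hold and $(V,Y,e^{\bf z\cdot\bf T},\mathds 1)$ is a CVA.

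The point needing care --- and the only real pitfall --- is avoiding circularity with V4: the existing proof of skew-symmetry (Proposition \ref{prop:skewsymmetry}) and the existing proof of the exponential law both use V4, so I deliberately re-derive $e^{\bf z\cdot\bf T}e^{-\bf z\cdot\bf T}=\id_V$ from the iterate formula via the $T^{(\bf k)}$-product relations, keeping the whole deduction V1, V2, V3, Jacobi $\Rightarrow$ V5 and iterate $\Rightarrow$ (weak associativity and the $T$-relations) $\Rightarrow$ V4 genuinely independent of V4. Beyond that, the only computation hidden here is the equivalence Jacobi $\Leftrightarrow$ V5 $+$ iterate formula, which I take as already recorded; it amounts to the two residue manipulations of the Jacobi identity together with Lemma \ref{lem:deltaidentities} and the three-variable $\Delta$-identities, modulo the usual $(-1)^N$ sign bookkeeping coming from the $\Omega$-forms.
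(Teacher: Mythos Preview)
Your proposal is correct and follows essentially the same route as the paper: both reduce Jacobi to V5 plus the iterate formula, then specialize the iterate formula at $b=\mathds 1$ to extract the exponential law $e^{\bf z\cdot\bf T}e^{\bf w\cdot\bf T}=e^{(\bf z+\bf w)\cdot\bf T}$ (the paper does this in generating-series form via $Y(e^{\bf z\cdot\bf T}a,\bf w)=Y(a,\bf w+\bf z)$, you via the coefficientwise relation $T^{(\bf l)}T^{(\bf k)}=\binom{\bf l+\bf k}{\bf k}T^{(\bf l+\bf k)}$), and finally feed $e^{\bf z\cdot\bf T}e^{-\bf z\cdot\bf T}=\id_V$ together with weak associativity into the preceding lemma to obtain V4. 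The only cosmetic point is that your citation ``(Proposition~\ref{prop:weakassoc})'' for the implication iterate $\Rightarrow$ weak associativity should point to the argument in the proposition immediately preceding this one, since the proof of Proposition~\ref{prop:weakassoc} itself uses V4.
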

\begin{proof}
    The Jacobi identity is equivalent to V5 and the iterate formula, and by the proof of the previous proposition we can apply the iterate formula to $\mathds 1$ to get
    \[Y(a,\bf z-\bf w)e^{-\bf w\cdot \bf T}b - e^{-\bf w\cdot \bf T}Y(a,\bf z)b = 0.\]
    Now we just need to show $e^{\bf z\cdot \bf T}e^{-\bf z\cdot \bf T} = \id_V$ to get V4. To see this, set $b=\mathds 1$ in the iterate formula. Noting that $p(a,\mathds 1) = 1$ for all $a\in V$ and canceling on both sides gives us the identity
    $Y(e^{\bf z\cdot \bf T}a,\bf w) =  Y(a,\bf w+\bf z)$. Now apply both sides to $\mathds 1$ to get $e^{\bf w\cdot \bf T}e^{\bf z\cdot \bf T}=e^{(\bf w+\bf z)\cdot \bf T}$, and setting $\bf w=-\bf z$ gives us $e^{\bf z\cdot \bf T}e^{-\bf z\cdot \bf T}=\id_V$.
\end{proof}

\subsubsection{Vertex algebras in terms of correlation functions}
We can generalize the graded algebra $\cal K^{\bf z,\bf w,\bf z-\bf w}(V)$ by considering a configuration space in $d$ many sets of variables:
\begin{equation}
    H^\bullet \Conf^\circ_d(V) \eqdef V[\![\bf z^1,\ldots,\bf z^d]\!][\Omega^{\bf m}_{\bf z^k},\Omega^{\bf m}_{\bf z^k-\bf z^l}]_{1\leq k<l\leq d,\, \bf m\geq \bf 0}.
\end{equation}
Here the degree $N$ forms are subject to similar conditions to $\cal K^{\bf z,\bf w,\bf z-\bf w}$. More explicitly, we require that for all $i,j,k$
\[\Omega^{\bf 0}_{\bf x^i-\bf x^j}\Omega^{\bf 0}_{\bf x^j-\bf x^k} + \Omega^{\bf 0}_{\bf x^j-\bf x^k}\Omega^{\bf 0}_{\bf x^k-\bf x^i} + \Omega^{\bf 0}_{\bf x^k-\bf x^i}\Omega^{\bf 0}_{\bf x^i-\bf x^j} = 0.\]
If we set $(\bf x^i,\bf x^j,\bf x^k) = (\bf z,\bf w,\bf 0)$, then
\[\Omega^{\bf 0}_{\bf z-\bf w}\Omega^{\bf 0}_{\bf w} + \Omega^{\bf 0}_{\bf w}\Omega^{\bf 0}_{-\bf z} + \Omega^{\bf 0}_{-\bf z}\Omega^{\bf 0}_{\bf z-\bf w} = (-1)^N((\Omega_{\bf w}^{\bf 0}-\Omega_{\bf z}^{\bf 0})\Omega_{\bf z-\bf w}^{\bf 0} - \Omega_{\bf w}^{\bf 0}\Omega_{\bf z}^{\bf 0}) = 0,\]
which is the same as (3) for $\cal K^{\bf z,\bf w,\bf z-\bf w}$.
This is a version of the Arnold relations.

We may also consider a version of the above, except we omit the forms $\Omega^m_{z^k}$:
\begin{equation}
    H^\bullet \Conf_d(V) \eqdef V[\![\bf z^1,\ldots,\bf z^d]\!][\Omega^{\bf m}_{\bf z^k-\bf z^l}]_{1\leq k< l\leq d,\, \bf m\geq \bf 0}.
\end{equation}
We consider our analog of $d$-point correlation functions, an important object of study in 2D conformal field theories. Before we present the proposition we give a small piece of notation. Let $a^1,\ldots,a^d\in V$ be homogeneous elements. Given $\sigma\in S_d$, decompose it into a composition of pairwise swaps, say $\sigma = (k_1l_1)\circ\ldots\circ (k_rl_r)$ for $k^i,l^i\in [1,d]$. Define
$\chi(\sigma;a^1,\ldots,a^d) = \prod_{i=1}^rp(a^{k_i},a^{l_i})$. It is straightforward to see that this formula is independent of the decomposition of $\sigma$.

One of the primary objects of study in 2D CFTs is the collection of correlation functions, which by the Wightman reconstruction theorem \cite[Theorem 8.18]{SchottenhauerCFT} allows us to recover all information about the fields underlying the theory. There is a similar correspondence for VAs, and we show an analog of the result here for CVAs.
\begin{thm}\label{thm:CVAcorrfuns}
    A $(n,N)$-CVA is equivalent to the data $(V,\mu,e^{\bf z\cdot \bf T},\mathds 1)$, where $V=\bigoplus_{r\in \Z}V^r$ is a $\Z$-graded $R$-module, $\mathds 1\in V^0$, $e^{\bf z\cdot \bf T} = \sum_{k\geq 0}\bf z^{\bf k}T^{(\bf k)}$ is an endomorphism series, and $\mu=(\mu_d)_{d=0}^\infty$ is a collection of degree 0 $R$-linear maps
    \[\mu_d \colon V^{\otimes d} \to H^\bullet \Conf_d(V),\]
    where we operate under the convention that $V^{\otimes 0}=R$. This data satisfies the following axioms:
    \begin{enumerate}
        \item For all $r\in R$ we have $\mu_0(r) = r\mathds 1$.
        \item For all $a\in V$ we have $\mu_1(a)(\bf z) = e^{\bf z\cdot \bf T}a$. In particular, $\mu_1(a)(\bf 0) = a$.
        \item $\mu_d$ is $S_d$-invariant; that is for all $d\geq 1$, $a^1,\ldots,a^d\in V$ homogeneous, and $\sigma\in S_d$ we have
        \begin{equation}
            \mu_d(a^1,\ldots,a^d)(\bf z^1,\ldots,\bf z^d) = \chi(\sigma;a^1,\ldots,a^d)\mu_d(a^{\sigma(1)},\ldots,a^{\sigma(d)})(\bf z^{\sigma(1)},\ldots,\bf z^{\sigma(d)}).
        \end{equation}
        \item For all $d_2\geq 0$, $1\leq i\leq d_1$, and $a^j,b^j\in V$ we have
        \begin{equation}
        \begin{aligned}
            &\mu_{d_1}(a^1,\ldots,a^{i-1},\mu_{d_2}(b^1,\ldots,b^{d_2})(\bf z^1,\ldots,\bf z^{d_2}),a^{i+1},\ldots,a^{d_1})(\bf w^1,\ldots,\bf w^{d_1})\\
            &=i_{\bf z^1,\bf w^i}\circ\ldots\circ i_{\bf z^{d_2},\bf w^i}\mu_{d_1+d_2-1}(a^1,\ldots,a^{i-1},b^1,\ldots,b^{d_2},a^{i+1},\ldots,a^{d_1})\\
            &\qquad\qquad\qquad (\bf w^1,\ldots,\bf w^{i-1},\bf z^1+\bf w^i,\ldots,\bf z^{d_2}+\bf w^i,\bf w^{i+1},\ldots,\bf w^{d_1})
        \end{aligned}
        \end{equation}
    \end{enumerate}
    The correspondence is given by the relation
    \begin{equation}
        i_{\bf z^1,\ldots,\bf z^d}\mu_d(a^1,\ldots,a^d)(\bf z^1,\ldots,\bf z^d)= Y(a^1,\bf z^1)\ldots Y(a^d,\bf z^d)\mathds 1.
    \end{equation}
\end{thm}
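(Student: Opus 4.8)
The plan is to establish the equivalence by constructing the two directions of the correspondence and checking they are mutually inverse, following the pattern of the correspondence between chiral algebras and factorization algebras, but done explicitly via formal calculus. Starting from a $(n,N)$-CVA $(V,Y,e^{\bf z\cdot\bf T},\mathds 1)$, I would define $\mu_d(a^1,\ldots,a^d)$ to be the unique element of $H^\bullet\Conf_d(V)$ whose image under $i_{\bf z^1,\ldots,\bf z^d}$ (the iterated expansion embedding analogous to Lemma \ref{lem:multivarembed}) is $Y(a^1,\bf z^1)\cdots Y(a^d,\bf z^d)\mathds 1$. The first thing to verify is that this is well-defined: by repeated application of the associativity relation (Proposition \ref{prop:assoc}) and weak commutativity (V5), the product $Y(a^1,\bf z^1)\cdots Y(a^d,\bf z^d)\mathds 1$, which a priori lives in $V\langle\!\langle \bf z^1\rangle\!\rangle\cdots\langle\!\langle \bf z^d\rangle\!\rangle$, is in fact the expansion of a single element of $H^\bullet\Conf_d(V)$ — the poles being uniformly bounded because each $Y(a^i,\bf z^i)$ is a field and the locality bounds from V5 apply pairwise. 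Here it is important that we land in $H^\bullet\Conf_d(V)$ (no $\Omega^{\bf m}_{\bf z^k}$ forms) rather than $H^\bullet\Conf^\circ_d(V)$: the reason is that $Y(a,\bf z)\mathds 1 = e^{\bf z\cdot\bf T}a$ is a power series by V3, so applying $\mathds 1$ on the right kills all the singular-at-the-origin behavior, leaving only poles along the diagonals $\bf z^k = \bf z^l$.

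Next I would check that the four axioms (1)--(4) hold for this $\mu$. Axiom (1) is immediate from $Y(\mathds 1,\bf z)=\id_V$ (V2) applied to $\mathds 1$. Axiom (2) is exactly V3. Axiom (3), the $S_d$-invariance, reduces to the case of adjacent transpositions, where it is precisely the statement that $Y(a^i,\bf z^i)Y(a^{i+1},\bf z^{i+1})$ and $p(a^i,a^{i+1})Y(a^{i+1},\bf z^{i+1})Y(a^i,\bf z^i)$ are expansions of the same element — this is weak commutativity V5 (in the form of Proposition \ref{prop:localanalytic}), with the sign $\chi(\sigma;\ldots)$ bookkeeping the Koszul signs from the graded commutator. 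Axiom (4), the composition/insertion axiom, is the substantive one: unwinding the definition of $Y(Y(a,\bf z)b,\bf w)$ (the Definition preceding Proposition \ref{prop:weakassoc}) and using associativity (Proposition \ref{prop:assoc}) together with translation covariance V4 in the form $Y(a,\bf z^j+\bf w^i) = e^{\bf w^i\cdot\bf T}Y(a,\bf z^j)e^{-\bf w^i\cdot\bf T}$, one rewrites the left side $\mu_{d_1}(\ldots,\mu_{d_2}(b^1,\ldots,b^{d_2})(\bf z^\bullet),\ldots)(\bf w^\bullet)$ as the product $Y(a^1,\bf w^1)\cdots Y(b^1,\bf z^1+\bf w^i)\cdots Y(b^{d_2},\bf z^{d_2}+\bf w^i)\cdots Y(a^{d_1},\bf w^{d_1})\mathds 1$ expanded in the appropriate nested order, which is exactly the right side. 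The expansion embeddings $i_{\bf z^j,\bf w^i}$ appearing in the statement are precisely the ones recording that the $\bf z^j$-variables are expanded as small relative to $\bf w^i$, matching the $i_{\bf z,\bf w}$ convention of V4.

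For the converse direction, given $(V,\mu,e^{\bf z\cdot\bf T},\mathds 1)$ I would recover $Y$ by a Goddard-uniqueness-type argument. One defines $Y(a,\bf z)$ by specifying $Y(a,\bf z)b$ for each $b$: using axiom (4) with $d_1=d_2=1$ to peel apart $\mu_2(a,b)(\bf z,\bf w)$ and then applying $i_{\bf z,\bf w}$ and setting $\bf w=\bf 0$, one gets a candidate field, and axiom (1) (or $d_1=1$) ensures it is well-defined on all of $V$ since $\mu_d(\ldots,\mathds 1,\ldots) = \mu_{d-1}(\ldots)$. Axioms V1--V3 follow from (2) and the well-definedness of $\mu_d$ landing in $H^\bullet\Conf_d(V)$; V4 follows from (4) specialized appropriately; and V5 follows from (3) with an adjacent transposition, read through Proposition \ref{prop:localanalytic}. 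Finally one checks the two constructions are inverse: starting from $Y$, building $\mu$, and rebuilding $Y$ returns the original map because $Y(a,\bf z)$ is determined by $Y(a,\bf z)\mathds 1$ and its locality properties via Goddard uniqueness (Proposition \ref{prop:Goddard}), and the $\mu_2$ data together with axiom (4) encodes exactly $Y(a,\bf z)b$ for all $b$.

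The main obstacle I anticipate is the careful bookkeeping in axiom (4): verifying that the iterated expansion maps $i_{\bf z^1,\bf w^i}\circ\cdots\circ i_{\bf z^{d_2},\bf w^i}$ on the right-hand side match up with the nested $\langle\!\langle\cdot\rangle\!\rangle$-expansions produced by associativity and translation covariance, and that all the Koszul signs (the $p(a,b)$ factors hidden in the definition of $Y(Y(a,\bf z)b,\bf w)$ and in the graded commutator) cancel correctly. This is essentially a multi-variable generalization of the single-insertion iterate formula (Proposition \ref{prop:iterate}), and the combinatorics of which variables are ``small'' relative to which — i.e., tracking the precise subring of $H^\bullet\Conf_d(V)$ and the correct order of nested Laurent-type expansions — is where all the subtlety lies. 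Everything else is a routine, if tedious, unwinding of the definitions using the structural results already established.
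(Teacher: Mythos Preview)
Your proposal is correct and follows essentially the same approach as the paper: define $\mu_d$ as the common preimage of the iterated product $Y(a^1,\bf z^1)\cdots Y(a^d,\bf z^d)\mathds 1$ using locality and associativity, then in the reverse direction set $Y(a,\bf z)b = i_{\bf z,\bf w}\mu_2(a,b)(\bf z,\bf w)|_{\bf w=\bf 0}$ and verify V1--V5 from axioms (1)--(4). Two small places where the paper is slightly more careful than your sketch: recovering V5 uses axiom (4) as well as (3), since you need (4) to identify $\mu_2(a,\mu_2(b,c)(\bf w,\bf 0))(\bf z,\bf 0)$ with an expansion of $\mu_3(a,b,c)(\bf z,\bf w,\bf 0)$ before (3) gives the symmetry; and recovering V4 is done in two steps, first extracting $e^{\bf w\cdot\bf T}e^{-\bf w\cdot\bf T}=\id_V$ from (4) and (2) applied to $\mathds 1$, and only then deducing the conjugation formula.
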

\begin{proof}
    Let $V$ be a CVA, and let $d\geq 1$ and $a^1,\ldots a^d\in V$.
    By mutual locality, for all $v\in V$ and $\sigma\in S_d$ the series
    \[\chi(\sigma,a^1,\ldots,a^d)Y(a^{\sigma(1)},\bf z^{\sigma(1)})\ldots Y(a^{\sigma(d)},\bf z^{\sigma(d)})v\]
    is the expansion of the same element $\mu_d(a^1,\ldots,a^d)(\bf z^1,\ldots,\bf z^d)$ in 
    $H^\bullet \Conf^\circ_d(V)$, with the order of all poles on diagonals uniformly bounded for all $v\in V$. This shows property (3).
    Axioms (1), and (2) above follow from axioms V2, and V3 for CVAs respectively. Axiom (4) follows from an inductive application of the associativity property for CVAs.

    Given $\mu$ as satisfying axioms (1)-(4) above, we can define $Y$ and recover all of its properties as follows:
    \begin{enumerate}
        \item Recall that $\mu_2(a,b)(\bf z,\bf w)\in H^\bullet \Conf_2(V)$, which is generated by power series over $z,w$ and symbols $\Omega^{\bf m}_{\bf z-\bf w}$. Applying the expansion $i_{\bf z,\bf w}$ we map $\Omega^{\bf m}_{\bf z-\bf w}$ to $\partial_{\bf w}^{(\bf m)}\Delta_-(\bf z,\bf w)$. Letting $\bf w=\bf 0$ we have $\partial_{\bf w}^{(\bf m)}\Delta_-(\bf z,\bf w)|_{\bf w=\bf 0} = \Omega^{\bf m}_{\bf z}$. In abuse of notation, we write
        \[\mu_2(a,b)(\bf z,\bf 0) \eqdef i_{\bf z,\bf w}\mu_2(a,b)(\bf z,\bf w)|_{\bf w=\bf 0},\]
        so for all $a,b\in V$ we have
        \[\mu_2(a,b)(\bf z,\bf 0)\in \cal K^{\bf z}(V).\]
        So we define $Y(a,\bf z)b = \mu_2(a,b)(\bf z,\bf 0)$, and this shows V1.
        \item Using axiom (4) we have
        \[\mu_2(a,\mathds 1)(\bf z,\bf w) = \mu_1(a)(\bf z) = e^{\bf z\cdot \bf T}a.\]
        Setting $\bf w=\bf 0$, we obtain $Y(a,\bf z)\mathds 1 = e^{\bf z\cdot \bf T}a$, which shows V3.
        \item By the same reasoning as in the previous point and axiom (3) we have 
        \[\mu_2(\mathds 1,a)(\bf w,\bf z) = e^{\bf z\cdot \bf T}a,\]
        so setting $\bf z=\bf 0$ gives us $\mu_2(\mathds 1,a)(\bf z,\bf 0) = Y(\mathds 1,\bf z)a = a$. This shows V2.
        \item Let $a,b\in V$ be homogeneous. Axiom (3) tells us that
        \[\mu_3(a,b,c)(\bf z,\bf w,\bf x) = p(a,b)\mu_3(b,a,c)(\bf w,\bf z,\bf x),\]
        and axiom (4) implies
        % \[\mu_3(a,b,c)(\bf z,\bf w+\bf y,\bf x+\bf y) = \mu_2(a,\mu_2(b,c)(\bf w,\bf x))(\bf z,\bf y)\]
        \[\mu_3(a,b,c)(\bf z,\bf w,\bf 0) = \mu_2(a,\mu_2(b,c)(\bf w,\bf 0))(\bf z,\bf 0)\]
        so $Y(a,\bf z)Y(b,\bf w)c$ and $p(a,b)Y(b,\bf w)Y(a,\bf z)c$ are expansions of the same element in $H^\bullet\Conf_d(V)$. By Proposition \ref{prop:localanalytic} this implies that $[Y(a,\bf z),Y(b,\bf w)]c$ is local in the variables $(\bf z,\bf w)$ for all $a,b,c\in V$. Thus, the series $[Y(a,\bf z),Y(b,\bf w)]$ is local, showing V5.
        \item We have
        \[e^{\bf w\cdot \bf T}Y(a,\bf z)b = \mu_1(\mu_2(a,b)(\bf z,\bf 0))(\bf w) = i_{\bf z,\bf w}\mu_2(a,b)(\bf z+\bf w,\bf w).\]
        If we set $b=\mathds 1$, then the resulting expression is in $V[\![\bf z,\bf w]\!]$, so we can set $\bf z=-\bf w$. This implies that
        \[e^{\bf w\cdot \bf T}Y(a,-\bf w)\mathds 1 = e^{\bf w\cdot \bf T}e^{-\bf w\cdot \bf T}a = \mu_2(a,\mathds 1)(\bf w-\bf w,\bf 0) = \mu_1(a)(\bf 0) = a.\]
        Thus $e^{\bf w\cdot \bf T}e^{-\bf w\cdot \bf T} = \id_V$.
        \item 
        We also have
        \[Y(a,\bf z+\bf w)e^{\bf w\cdot \bf T}b = i_{\bf z,\bf w}\mu_2(a,b)(\bf z+\bf w,\bf w),\]
        which shows that $e^{\bf w\cdot \bf T}Y(a,\bf z)b = Y(a,\bf z+\bf w)e^{\bf w\cdot \bf T}b$ for all $a,b\in V$. By the previous point we then have $e^{\bf w\cdot \bf T}Y(a,\bf z)e^{-\bf w\cdot \bf T} = Y(a,\bf z+\bf w)$, showing V4.
    \end{enumerate}
\end{proof}
It is worth noting that although we left a much more general set of axioms, we only needed to use facts about $\mu_d$ for $d=0,1,2,3$ in the proof.
\begin{remark}
    We expect that a $(n,n-1)$-CVA presented in the above axioms may be used to define a translation-equivariant Beilinson-Drinfeld factorization algebra on $\A^n$ (which by \cite{FrancisGaitsgory} is equivalent to a translation-equivariant chiral algebra). A derived version of the above axioms might give an $\infty$-category that is equivalent to translation-equivariant factorization algebras on $\A^n$.
\end{remark}

\section{Examples of cohomological vertex algebras}\label{Sec5}
With the tools we developed for CVAs, we can now discuss the construction of examples using the identities from our version of the Dong-Li lemma and various $k$-product identities. The main result in this section is the reconstruction theorem (\ref{thm:reconstruction}), which we use to provide explicit constructions of some examples. In particular, we give analogs of the Virasoro, the $\beta\gamma$-free field (or bosonic ghost), the Heisenberg, and the affine Kac-Moody VAs. We conclude by describing the BRST reduction of a CVA.

\subsection{The Reconstruction Theorem}
In this section, we prove an analog of the famous reconstruction theorem for VAs as well as provide some basic examples of CVAs. The reconstruction theorem was discovered nearly simultaneously by different people and is presented differently depending on the author (for example \cite{frenkel2004vertex}, \cite{kac1998vertex}, \cite{LepowskyLi} all present the reconstruction theorem slightly differently).
\begin{thm}[Reconstruction]\label{thm:reconstruction}
    Let $R$ be a ring. Suppose we are given the following data:
    \begin{itemize}
        \item A $\Z$-graded $R$-module $V=\bigoplus_{r\in \Z}V^r$.
        \item A nonzero element $\mathds 1\in V^0$.
        \item A collection of degree 0 endomorphisms $(T^{(\bf k)})_{\bf k\geq \bf 0}\subset \End(V)$.
        \item An $R$-submodule $\Sigma\leq V$ together with a degree 0 $R$-linear assignment
        \[Y_0 \colon \Sigma \to \cal E_c(V), \qquad a\mapsto Y_0(a,\bf z) \eqdef a(\bf z)\]
        in the sense that for $a=\sum_ra^r$ decomposed into homogeneous parts the corresponding field is $a(\bf z)=\sum_ra_r(\bf z)$ such that $a_r(\bf z)$ is homogeneous and $p(a_r) = p(a_r(\bf z)) = r$ for all $r\in \Z$.
    \end{itemize}
    Suppose the following holds:
    \begin{enumerate}
        \item For all $a\in V$ we have $e^{\bf w\cdot \bf T}a(\bf z)e^{-\bf w\cdot \bf T} = a(\bf z+\bf w)$.
        \item For all $a\in V$ we have $a(\bf z)\mathds 1 = e^{\bf z\cdot \bf T}a$.
        \item For all $a,b\in V$ the series $a(\bf z)$ and $b(\bf w)$ are mutually local.
        \item $V$ is generated by the symbols $a^{1}_{(-\bf 1-\bf k^1)}\ldots a^{d}_{(-\bf 1-\bf k^d)}\mathds 1$ with $\bf k^s\geq \bf 0$ and $a^i\in \Sigma$.
    \end{enumerate}
    Then the definition of $Y$ on the generating set of $V$ given by
    \begin{equation}
    \begin{aligned}
        Y(a_{(-\bf 1-\bf k^1)}^{1}\ldots a_{(-\bf 1-\bf k^d)}^{d}\mathds 1,\bf z) &\eqdef a^1(\bf z)_{(-\bf 1-\bf k^1)}\left(a^2(\bf z)_{(-\bf 1-\bf k^2)}\left(\ldots a^d(\bf z)_{(-\bf 1-\bf k^d)}\id_V\right)\right)\\
        &=\ \normord{\partial^{(\bf k^1)}_{\bf z}a^1(\bf z)\ldots \partial^{(\bf k^d)}_{\bf z}a^d(\bf z)}
    \end{aligned}
    \end{equation}
    gives a well-defined structure of a cohomological vertex algebra on $V$ such that the vacuum is $\mathds 1$, the translation operator is $e^{\bf z\cdot \bf T}$, and $a(\bf z) = Y_0(a,\bf z) = Y(a,\bf z)$ for all $a\in \Sigma$. Moreover, this structure is the unique extension of $Y_0$ with respect to the generating set of $V$ given.
\end{thm}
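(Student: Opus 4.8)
The plan is to realize $V$ as a space of mutually local fields on itself. Let $\cal F_0\subseteq\cal E_c(V)$ be the $R$-span of $\id_V$ together with all iterated normal ordered products $\normord{\partial^{(\bf k^1)}_{\bf z}a^1(\bf z)\cdots\partial^{(\bf k^d)}_{\bf z}a^d(\bf z)}$ with $a^i\in\Sigma$ and $\bf k^i\geq\bf 0$; these lie in $\cal E_c(V)$ because Hasse derivatives and normal ordered products of (pairwise mutually local) fields are again fields, as in \cite{ravioli}, and because the normal ordered product is associative, so such a product equals $a^1(\bf z)_{(-\bf 1-\bf k^1)}\bigl(\cdots a^d(\bf z)_{(-\bf 1-\bf k^d)}\id_V\bigr)$. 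Let $s\colon\cal F_0\to V$ be the state map $A\mapsto A(\bf z)\mathds 1|_{\bf z=\bf 0}$; a short computation gives $s\bigl(\normord{\partial^{(\bf k^1)}_{\bf z}a^1(\bf z)\cdots}\bigr)=a^1_{(-\bf 1-\bf k^1)}\cdots a^d_{(-\bf 1-\bf k^d)}\mathds 1$, so hypothesis~(4) says $s$ is surjective. I will show that every field in $\cal F_0$ (a) is a cohomological field, (b) is translation covariant, (c) creates its state, i.e. $A(\bf z)\mathds 1=e^{\bf z\cdot\bf T}s(A)$, and (d) is mutually local with every field in $\cal F_0$; and then (e) that $s$ is injective. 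Granting this, $s$ is an $R$-module isomorphism of degree $0$, and we set $Y(a,\bf z)\eqdef s^{-1}(a)(\bf z)$. Then V1 and V5 are (a) and (d); V2 holds since $s(\id_V)=\mathds 1$; V3 and V4 are (c) and (b); and since $a(\bf z)\in\cal F_0$ has state $a$ for each $a\in\Sigma$ we get $Y|_\Sigma=Y_0$, while uniqueness follows because the iterate formula (Proposition~\ref{prop:iterate}) forces $Y$ of a spanning vector to equal the corresponding iterated normal ordered product in any cohomological vertex algebra extending $Y_0$.

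Properties (a)--(d) are established by a single induction on the number $d$ of factors, peeling off the leftmost factor via $\normord{X_1\cdots X_d}=\normord{X_1\normord{X_2\cdots X_d}}$. For translation covariance, the key observation is that conjugation by $e^{\bf w\cdot\bf T}$ acts coefficientwise on $\cal K^{\bf z}_{\mathrm{dist}}(\End V)$, hence commutes with each $\partial^{(\bf k)}_{\bf z}$ and with the splitting $A=A_++A_-$; combined with the Leibniz rule this gives $e^{\bf w\cdot\bf T}\normord{AB}\!(\bf z)e^{-\bf w\cdot\bf T}=\normord{AB}\!(\bf z+\bf w)$ once $A,B$ are translation covariant. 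For the creation property, one notes that a translation covariant field $C$ satisfies $C(\bf z)_+=e^{\bf z\cdot\bf T}C_{(-\bf 1)}e^{-\bf z\cdot\bf T}$ (substitute $\bf z=\bf w$ into the power series identity $e^{-\bf w\cdot\bf T}C(\bf z)_+e^{\bf w\cdot\bf T}=C(\bf z-\bf w)_+$) and $C(\bf z)_-\mathds 1=0$ whenever $C$ creates a state; writing $\normord{X_1B}$ with $X_1=\partial^{(\bf k)}_{\bf z}a(\bf z)$ then yields $\normord{X_1B}\!(\bf z)\mathds 1=X_1(\bf z)_+B(\bf z)\mathds 1=e^{\bf z\cdot\bf T}\bigl((X_1)_{(-\bf 1)}s(B)\bigr)=e^{\bf z\cdot\bf T}s(\normord{X_1B})$, using that $e^{\bf z\cdot\bf T}$ is a genuine exponential and fixes $\mathds 1$ (part of, or immediate from, the hypotheses). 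For mutual locality, the base cases $d\leq 1$ use hypothesis~(3) and the fact that differentiating one entry of a local pair preserves locality (from Lemma~\ref{lem:deltaidentities}), and the inductive step is exactly Lemma~\ref{lem:Dongpoly}(2), whose three locality hypotheses for $A=W$, $B=\normord{X_2\cdots X_d}$, $C=X_1$ are supplied by the inductive hypothesis.

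For injectivity of $s$, suppose $A,B\in\cal F_0$ are homogeneous of the same degree with $A(\bf z)\mathds 1=B(\bf z)\mathds 1$. Following the proof of Proposition~\ref{prop:Goddard} almost verbatim: for each $C\in\cal F_0$ choose a locality bound for the pairs $(A,C)$ and $(B,C)$; taking nonsingular parts of the vanishing commutators, applying the vacuum, and setting $\bf w=\bf 0$ gives $A(\bf z)_+s(C)=B(\bf z)_+s(C)$, so $A_+=B_+$ since the $s(C)$ span $V$. Then, expanding $[A-B,C]=\sum_{\bf m}\partial^{(\bf m)}_{\bf w}\Delta(\bf z,\bf w)C^{\bf m}(\bf w)$ and applying the vacuum, the left-hand side has no $\Omega^{\bf l}_{\bf w}$-terms while the $\Delta_+$-part of the right-hand side is built from such terms; extracting their coefficients in order of increasing $\bf l$ forces $C^{\bf m}(\bf w)\mathds 1=0$ for all $\bf m$, whence $A(\bf z)_-s(C)=B(\bf z)_-s(C)$ and $A_-=B_-$. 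Thus $A=B$, $s$ is bijective, and the construction is complete.

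The main obstacle is the simultaneous propagation of (a)--(d) through the inductive step together with the cohomological bookkeeping: although each step reduces to a direct manipulation of the $\Delta$-distribution identities of Lemma~\ref{lem:deltaidentities} and the product rules for the $\Omega^{\bf k}_{\bf z}$, one must check that the degree shifts by $N$ and the graded commutation of the $\Omega$-forms do not obstruct the classical arguments --- in particular that $C(\bf z)_\pm$ are the honest degree-$p(C)$ and degree-$(p(C)-N)$ components, so that conjugation by the degree-$0$ operator $e^{\bf w\cdot\bf T}$ respects the splitting, and that the $\Omega^{\bf l}_{\bf w}$-argument in the Goddard-type step still forces the residue fields to annihilate $\mathds 1$. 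None of this is deep, but it is where the cohomological content of the theorem is concentrated.
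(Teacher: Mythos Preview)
Your proof is correct and follows essentially the same route as the paper's: both arguments build the vertex operator map on the spanning set of normal ordered products, verify translation covariance and the creation property inductively, deduce mutual locality from the Dong--Li lemma, and then invoke a Goddard-type argument to show well-definedness. Your framing via the state map $s\colon\cal F_0\to V$ and its bijectivity is the standard Frenkel--Ben-Zvi/Kac packaging, while the paper checks the axioms directly on generating expressions and then handles well-definedness; the underlying computations are the same.
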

\begin{proof}
    We verify each axiom.
    \begin{enumerate}[label = (\roman*)]
        \item The fact that $Y$ maps every element of $V$ to a field is guaranteed by the Dong-Li lemma (\ref{lem:Dongpoly}), showing V1.
        \item The fact that $Y(\mathds 1,\bf z) = \id_V$ follows from the definition of $Y$, showing V2.
        \item Applying the Dong-Li lemma inductively on the generating set of $V$ shows V5.
        \item Using properties (1), (2), and V2 we have that $e^{\bf z\cdot \bf T}e^{\bf w\cdot \bf T}a=e^{(\bf z+\bf w)\cdot \bf T}a$ for any $a\in \Sigma$. Using the definition of the $k$-products and that $e^{\bf z\cdot \bf T}e^{-\bf z\cdot \bf T}a = a$ we can see that
        \begin{align*}
            e^{\bf w\cdot \bf T}\left(a^1(\bf z)_{(\bf k)}a^2(\bf z)\right)e^{-\bf w\cdot \bf T} 
            % \\&= \left(e^{\bf w\cdot \bf T}a^1(z)e^{-\bf w\cdot \bf T}\right)_{(k)}\left(e^{\bf w\cdot \bf T}a^2(z)e^{-\bf w\cdot \bf T}\right)\\
            &= a^1(\bf z+\bf w)_{(\bf k)}a^2(\bf z+\bf w).
        \end{align*}
        By induction, axiom V4 holds for any $v$ in the generating set of $V$.
        \item First note that for $\bf k\geq \bf 0$ and $a^1,a^2\in \Sigma$
        % \begin{align*}
        %     \left(a^1(w)_{(k)}a^2(w)\right)\mathds 1 &= \Res_z\left((\bf z-\bf w)^ka^1(z)e^{\bf w\cdot \bf T}a^2\right)\\
        %     &= e^{\bf w\cdot \bf T}\Res_z\left((\bf z-\bf w)^ka^1(\bf z-\bf w)a^2\right)\\
        %     &= e^{\bf w\cdot \bf T}a^1_{(k)}a^2.
        % \end{align*}
        % Similarly we can verify that
        \[\left(a^1(\bf w)_{(-\bf 1-\bf k)}a^2(\bf w)\right)\mathds 1 = e^{\bf w\cdot \bf T}a^1_{(-\bf 1-\bf k)}a^2.\]
        By induction, we can see that axiom V2, $Y(v,\bf z)\mathds 1 = e^{\bf z\cdot \bf T}v$ holds for any element $v$ of the generating set.
    \end{enumerate}
    This shows that $Y$ is well-defined on every $v$ in the generating set of $V$. Now for an arbitrary element $v\in V$ we write it as a finite linear combination $v = \sum_ir_iv_i$ of elements $v_i$ of the generating set and define $Y(v,\bf z) = \sum_ir_iY(v_i,\bf z)$. The only remaining concern here is that this formula could depend on the chosen presentation of $v$.
    
    Suppose we have two different formulas for $a\in V$ in terms of the generating set of $V$, say $a$ can be written as two different linear combinations $A^1$ and $A^2$ of elements of the generating set. We then wish to show that $Y(A^1,\bf z)=Y(A^2,\bf z)$ if and only if $A^1=A^2$, showing that $Y$ is well-defined. By linearity we can bring both formulas into the same argument, meaning that we wish to show that $Y(v,\bf z) = 0$ if and only if $v=0$. If $Y(v,\bf z)=0$, then by V2 we have $e^{\bf z\cdot \bf T}v = 0$, implying $v=0$. Conversely we note that if $v=0$ then $Y(v,\bf z)\mathds 1 = e^{\bf z\cdot \bf T}v = 0$. The proof of Goddard uniqueness may then be used here, allowing us to conclude that $Y(v,\bf z)\mathds 1 = 0$ implies $Y(v,\bf z)=0$. Therefore $Y(v,\bf z)$ is independent of the presentation of $v$ in terms of the generating set.
\end{proof}
\begin{defn}
    Let $V$ be a cohomological vertex algebra. A collection of (homogeneous) elements $\{a^\alpha\}_{\alpha\in I}\subset V$ is said to be \textit{(homogeneous) generators} of $V$ when $V$ is generated by elements of the form $a^{\alpha^1}_{(\bf k^1)}\ldots a^{\alpha^d}_{(\bf k^d)}\mathds 1$ for $d\geq 0$, $\alpha^i\in I$, and $\bf k^i\leq -\bf 1$ or $\bf k^i\geq \bf 0$. When this holds when $\bf k^i\leq -\bf 1$ for all $i$ (that is omitting $\bf k^i\geq \bf 0$), then the collection is said to be \textit{strong generators} of $V$.
\end{defn}
All of the examples of cohomological vertex algebras we provide will be strongly generated. Note that the endomorphisms $T^{(\bf k)}$ are determined by property (2), so property (1) is the only requirement imposed on the series $e^{\bf z\cdot \bf T}$.

\subsubsection{Constructing cohomological vertex algebras from mutually local fields}
Fix integers $n,N$ as before. Given a collection of abstract mutually local homogeneous cohomological series $a(\bf z)$ for all $a\in I$ for some index set $I$, we may define the module $V$ on which the fields $a(\bf z)$ act to be generated by strings of the form $s = a^1_{(-\bf 1-\bf k^1)}\ldots a^d_{(-\bf 1-\bf k^d)}\mathds 1$ for some $d\geq 0$, $a^i\in I$ and $\bf k^i\geq \bf 0$ for all $i$. From this definition, given $s$ we have $a_{(\bf k)}s = 0$ whenever any component of $k$ is sufficiently large, showing $\{a(\bf z)\}_{a\in I}$ form a set of mutually local fields on $V$. These can be used to give a CVA structure on $V$ by defining $T^{(\bf k)}a \eqdef a_{(-\bf 1-\bf k)}\mathds 1$ for $\bf k\geq \bf 0$. 

In the above context, we say that the CVA $V$ is \textit{freely generated} by the collection $\{a(\bf z)\}_{a\in I}$. All of the following examples are freely generated CVAs.

\subsection{The cohomological Virasoro vertex algebra}
The first noncommutative example of a cohomological vertex algebra we describe is the \textit{cohomological Virasoro vertex algebra} $V_{CW_n}$ over $R[\mathbf k]$ for a ring $R$ (ignoring $\mathbf k$ when $n\in \Z_{\geq 2}$). In practice, we should set $R$ to be a $\Q$-algebra such as $\C$.

We define the \textit{cohomological Witt algebra} $CW_n$ to be the Lie algebra of vector fields
\begin{equation}
    CW_n = \bigoplus_{a=1}^n\cal K^{\bf x}_{\mathrm{poly}}(R)\partial_{x_a}.
\end{equation}
There are other possible derivations of $\cal K^{\bf x}_{\mathrm{poly}}$ that exist due to the zero-divisors in $\cal K^{\bf x}_{\mathrm{poly}}$, but $CW_n$ consists of the naturally defined vector fields from an analytic perspective.

This $R$-module has a basis given by elements $G_{\bf r}^a \eqdef -\bf x^{\bf r}\partial_{x_a}$ and $\Gamma_{\bf r}^a \eqdef -\Omega_{\bf x}^{\bf r}\partial_{x_a}$ for $\bf r\geq \bf 0$ and $a\in [1,n]$, where $G_{\bf k}^a$ has degree 0 and $\Gamma_{\bf r}^a$ has degree $N$. These satisfy the bracket relations
\[[G^a_{\bf r},G^b_{\bf s}] = r_bG^a_{\bf r+\bf s-\bf e_b} - s_aG^b_{\bf r+\bf s-\bf e_a},\qquad [\Gamma^a_{\bf r},\Gamma^b_{\bf s}] = 0,\]
\[[G^a_{\bf r}, \Gamma^b_{\bf s}] = r_b\Gamma_{\bf s-\bf r+\bf e_b}^a + (s_a+1)\Gamma_{\bf s-\bf r+\bf e_a}^b.\]
As usual, we set $G_{\bf r}^a = \Gamma_{\bf r}^a = 0$ if $\bf r\not\geq \bf 0$.
These bracket formulas provide $CW_n$ with a natural Lie algebra structure over $R$, and although we can naturally assign a grading to the Lie algebra it is not present in any of the bracket relations. 
We may also arrange these modes into a collection of series $(\Gamma^a(\bf z))_a$ via the definition
\begin{equation}
    \Gamma^a(\bf z) = \sum_{\bf k\geq \bf 0}\bf z^{\bf k}(-1)^N\Gamma_{\bf k}^a + \Omega^{\bf k}_{\bf z} G_{\bf k}^a = -\Delta(\bf z,\bf x)\partial_{x_a}.
\end{equation}
This is similar to the classical case where we set $T(z) = \sum_{k}z^{-k-2}L_k = -\sum_kz^{-1-k}x^k\partial_x = -\delta(z,x)\partial_x$. We can rescale the elements $\Gamma_{\bf k}^a$ together by a unit, so the extra $(-1)^N$ factor may be absorbed into $\Gamma_{\bf k}^a$ above.
The bracket relations for $CW_n$ are equivalent to the following graded commutator formula:
\begin{equation}\label{eq:CWncommutatorformula}
    [\Gamma^a(\bf z),\Gamma^b(\bf w)] = 
 \partial_{w_a}\Delta(\bf z,\bf w)\Gamma^b(\bf w) + 
 \partial_{w_b}\Delta(\bf z,\bf w)\Gamma^a(\bf w)+ \Delta(\bf z,\bf w)\partial_{w_a}\Gamma^b(\bf w).
\end{equation}
The 1-dimensional central extensions of $CW_n$ are quite restrictive:
\begin{lem}\label{thm:CWnCentral}
    Let $R$ be a $\Q$-algebra. The Lie algebra $CW_1$ admits a one-dimensional universal central extension $\widetilde{CW}_1 = CW_1\oplus R\mathbf c$, where the bracket is given as
    \begin{equation}
        [G_r,\Gamma_s] = (r+s+1)\Gamma_{s-r+1} + \delta_{r,s+3}\binom{s+3}{3}\mathbf c.
    \end{equation}
    $CW_n = \widetilde{CW}_n$ is centrally closed (as a Lie algebra) for $n\geq 2$.    
\end{lem}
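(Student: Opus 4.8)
The plan is to identify one-dimensional central extensions of $CW_n$ (over a $\Q$-algebra $R$) with $H^2(CW_n;R)$, the Chevalley--Eilenberg cohomology with trivial coefficients, and to compute this group: one-dimensional for $n=1$, zero for $n\geq 2$. Since every basis vector $G^a_{\bf r}$ and $\Gamma^a_{\bf r}$ is visibly a bracket of two others, $CW_n$ is perfect, so $H^2$ also controls the \emph{universal} central extension; the whole argument is insensitive to $N$ (the sign $(-1)^N$ having been absorbed into the $\Gamma^a_{\bf r}$), and for $N=n=1$ the first case is the computation of \cite{ravioli}.

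First I would cut down the cocycles with a grading. Equip $CW_n$ with the $\Z$-grading by \emph{total degree}: $\deg\bf z^{\bf r} = |\bf r|$, $\deg\Omega^{\bf r}_{\bf z} = -|\bf r|-n$, $\deg(f\partial_{x_a}) = \deg f - 1$, so that $\deg G^a_{\bf r} = |\bf r|-1\geq -1$ and $\deg\Gamma^a_{\bf r} = -|\bf r|-n-1\leq -n-1$; a check against the three families of bracket relations shows this is genuinely a grading, and that the Euler element $E\eqdef\sum_{i=1}^n G^i_{\bf e_i} = -\sum_i x_i\partial_{x_i}\in CW_n$ satisfies $[E,v] = -(\deg v)v$ for homogeneous $v$. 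By Cartan's formula $L_E = d\iota_E + \iota_E d$, the operator $L_E$ is nullhomotopic on cochains and acts on weight-$d$ cochains by the scalar $d$, so (as $d$ is invertible in $R$) every class in $H^2$ is represented by a weight-zero cocycle $\psi$, i.e. $\psi(x,y)=0$ unless $\deg x + \deg y = 0$. For such $\psi$: the $\Gamma\Gamma$-block vanishes (two negative degrees cannot sum to $0$); the $GG$-block is supported on the finitely many pairs with $|\bf r|+|\bf s|=2$, hence on $\frak{gl}_n = \Span\{G^a_{\bf e_i}\}$ together with the $G^a_{\bf 0}$ and $G^a_{\bf e_i+\bf e_j}$; and the $G\Gamma$-block is supported on pairs with $|\bf r| = |\bf s|+n+2$, where one observes $[G^a_{\bf r},\Gamma^b_{\bf s}]=0$ because the $\Gamma$-indices the bracket would produce have negative total size.

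Next I would kill the $GG$-block: using the cocycle identities among the finitely many degree-$0$ generators and the vanishing of $H^2(\frak{gl}_n)$ over a $\Q$-algebra, subtract a coboundary $\psi_\ell(x,y)=\ell([x,y])$ with $\ell$ supported in degree $0$ and identically zero on all $\Gamma^a_{\bf r}$; this removes $\psi(G,G)$ while leaving $\psi(G,\Gamma)$ untouched, since $\psi_\ell(G^a_{\bf r},\Gamma^b_{\bf s})$ is then a combination of $\ell$ on $\Gamma$'s, hence $0$. For the surviving $G\Gamma$-block, the cocycle identity on $(G^c_{\bf e_j},G^a_{\bf r},\Gamma^b_{\bf s})$ collapses, using $[G^a_{\bf r},\Gamma^b_{\bf s}]=0$, to $\psi([G^c_{\bf e_j},G^a_{\bf r}],\Gamma^b_{\bf s}) = \psi([G^c_{\bf e_j},\Gamma^b_{\bf s}],G^a_{\bf r})$ — i.e. the restriction of $\psi$ to the $\frak{gl}_n$-modules $\Span\{G^a_{\bf r}:|\bf r|=m\}\cong\text{Sym}^m(V^*)\otimes V$ and $\Span\{\Gamma^a_{\bf s}:|\bf s|=m'\}=:M_{m'}$ is $\frak{gl}_n$-invariant. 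For $n=1$ this pins everything down: feeding $G_0=-\partial_x$ into the identity with $(G_{s+4},\Gamma_s)$ gives the recursion $(s+1)\psi(G_{s+4},\Gamma_{s+1}) = (s+4)\psi(G_{s+3},\Gamma_s)$, whose solutions are $\psi(G_{s+3},\Gamma_s) = c\binom{s+3}{3}$; one then checks directly that the bracket displayed in the statement defines a Lie algebra, and that its cocycle is not a coboundary (any $\psi_\ell$ has $\psi_\ell(G_{s+3},\Gamma_s) = (2s+4)\ell(\Gamma_{-2}) = 0$), so $H^2(CW_1;R) = R\mathbf c$, universal by perfectness. For $n\geq 2$, $\frak{gl}_n$-invariance confines $\psi$ to $\Hom_{\frak{gl}_n}\!\big(\text{Sym}^{|\bf r|}(V^*)\otimes V\otimes M_{|\bf s|},\,R\big)$, and this space vanishes throughout the range $|\bf r| = |\bf s|+n+2$, forcing $\psi(G,\Gamma)=0$ and hence $H^2(CW_n;R)=0$, i.e. $CW_n = \widetilde{CW}_n$.

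The main obstacle I anticipate is the $n\geq 2$ vanishing: proving that no nonzero $\frak{gl}_n$-invariant pairing exists in the degree range $|\bf r| = |\bf s|+n+2$. This is essentially a highest-weight (or weight-multiplicity) bookkeeping in the tensor product above, equivalently the assertion that the one-variable recursion, imposed simultaneously along all $n$ coordinate directions, becomes overdetermined and has only the zero solution; handling this combinatorics cleanly is the crux. The weight reduction, the $n=1$ recursion, the closedness check for the displayed cocycle, the $GG$-block reduction, and perfectness are all routine.
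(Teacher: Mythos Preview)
The paper does not actually prove this lemma: after the statement it says only that ``one may examine a general 2-cocycle and use recursive equations to show the result,'' or alternatively invoke Gelfand--Fuks cohomology for formal vector fields. Your proposal is a fleshed-out version of the first route, and in that sense it matches the paper's intended approach while supplying far more detail than the paper does.

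Your $n=1$ argument is essentially complete: the Euler-element reduction to weight zero, the observation that $[G_{s+3},\Gamma_s]=0$, the $G_0$-recursion yielding $\psi(G_{s+3},\Gamma_s)=c\binom{s+3}{3}$, and the check that this is not a coboundary are all correct. One small point: when killing the $GG$-block for general $n$, the sentence ``using $H^2(\frak{gl}_n)=0$'' handles only the $(1,1)$ degree pairing; the $(0,2)$ piece $\psi(G^a_{\bf 0},G^b_{\bf e_i+\bf e_j})$ also has to be removed, and this needs the cocycle identity on triples of degrees $(-1,0,1)$ together with the residual freedom of adding a multiple of the trace to $\ell$. This is routine but should be said.

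For $n\geq 2$ you have correctly located the real gap: the vanishing of $\mathfrak{gl}_n$-invariant pairings between $\mathrm{Sym}^m(V^*)\otimes V$ and $M_{m'}$ in the range $m=m'+n+2$. The paper does not fill this gap either---its alternative is simply to cite Gelfand--Fuks. If you want to close your argument without that citation, the cleanest route is probably to identify $M_{m'}$ explicitly as a $\mathfrak{gl}_n$-module (it is a quotient of $\mathrm{Sym}^{m'}(V)\otimes V$, dual in a suitable sense to the polynomial piece) and then do the highest-weight bookkeeping you describe; alternatively, impose the recursions from \emph{all} $G^a_{\bf 0}$ simultaneously and show overdetermination directly. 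Either way, this is the crux, exactly as you anticipated.
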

One may examine a general 2-cocycle and use recursive equations to show the result. Alternatively, one may restrict to $R=\C$ or $\R$ and use Gelfand-Fuks cohomology for formal vector fields \cite{GelfandFuksFormal}.
For $n=1$, the extended bracket for $\Gamma(z)$ is
\begin{equation}\label{eq:CW1relations}
    [\Gamma(z),\Gamma(w)] = \Delta(z,w)\partial_{w}\Gamma(w) + 2\partial_{w}\Delta(z,w)\Gamma(w) + \partial^{(3)}_w\Delta(z,w)\mathbf c.
\end{equation}

For convenience, we set the underlying ring to be $R=\C$. For $n\in \Z_{\geq 2}$, the CVA $V_{\widetilde{CW}_n}$ is strongly generated over $\C$ by the fields $\{\Gamma^a(\bf z)\}_{a=1}^n$ satisfying the commutator relations in equation \ref{eq:CWncommutatorformula}. Similarly, for $n=1$ the CVA $V_{\widetilde{CW}_1}$ is strongly generated over $\C[\mathbf c]$ by the field $\Gamma(z)$ with relations given by equation \ref{eq:CW1relations}. Explicitly, the underlying vector space $V_{\widetilde{CW}_n}$ is given by
\[V_{\widetilde{CW}_n} = \Span_R\left\{\Gamma_{\bf k^1}^{a^1}\ldots\Gamma_{\bf k^d}^{a^d}\mathds 1\right\}_{\bf k^i\in \N^n,\
a\in [1,n],\
d\in \Z_{\geq 1}},\]
where $R=\C[\mathbf c]$ for $n=1$ and $\C$ otherwise; and the fields are generated by the $\bf k$-product relations given in the reconstruction theorem together with $\Gamma^a(\bf z) = Y(\omega^a,\bf z)$ for $\omega^a\eqdef \Gamma_{\bf 0}^a\mathds 1$. Here $\omega^{a}_{(\bf k)} = G^a_{\bf k}$ and $\omega^a_{(-\bf 1-\bf k)} = \Gamma_{\bf k}^a$. In this description, $\C[\mathbf c]$ should be treated as a graded commutative algebra, with $\mathbf c$ having degree $N$. If we quotient by the maximal ideal $(\mathbf c)$, then we naturally obtain the quotient CVA $V_{\widetilde {CW}_1}\twoheadrightarrow V_{CW_n}$.
\begin{remark}
    One issue with the above construction for $n=1$ is that the degree $N$ central element $\mathbf c$ is not invertible, which is an important feature for VOAs. If one wishes, they can resolve this issue by replacing $R$ with a graded commutative algebra where $\mathbf c$ is invertible, such as $R = \C[\mathbf c,\mathbf c^{-1}]$ or $\C(\mathbf c)$. The latter may be preferable since it is a field, and in either case $\mathbf c^{-1}$ has degree $-N$. For this we would have to require $N$ to be even since $1 = \mathbf c\mathbf c^{-1} = (-1)^N\mathbf c^{-1}\mathbf c = (-1)^N$. 
\end{remark}
We will explicitly describe the actions of the modes $\omega^a_{(\bf e_b)}$ and $\omega^a_{(\bf 0)}$ on $V_{CW_n}$. Using the commutator formula and that $Y(T^{(\bf k)}a,\bf z) = \partial_{\bf z}^{(\bf k)}Y(a,\bf z)$, we have
\begin{equation}
\begin{aligned}
    \omega_{(\bf j)}^a\left(\Gamma_{\bf k^1}^{b^1}\ldots\Gamma_{\bf k^d}^{b^d}\mathds 1\right)
    % &= \omega_{(j)}^a\left((T^{(k^1)}\omega^{b^1})_{(-1)}\ldots(T^{(k^d)}\omega^{b^d})_{(-1)}\mathds 1\right)\\
    =\sum_{i=1}^d(T^{(\bf k^1)}\omega^{b^1})_{(-\bf 1)}\ldots (\omega_{(\bf j)}^aT^{(\bf k^i)}\omega^{b^i})_{(-\bf 1)}\ldots (T^{(\bf k^d)}\omega^{b^d})_{(-\bf 1)}\mathds 1.
\end{aligned}
\end{equation}
Here we also note that $\omega^a_{(\bf j)}\mathds 1 = 0$. Since $Y(T^{(\bf k)}\omega^b,\bf w) = \partial_{\bf w}^{(\bf k)} Y(\omega^b,\bf w)$ we have the commutator formula
\begin{equation*}
\begin{aligned}
    &[Y(\omega^a,\bf z),Y(T^{(\bf k)}\omega^b,\bf w)] \\
    % &= (k_a+1)\partial_w^{(k+e_a)}\left[\Delta(\bf z,\bf w)Y(\omega^b,w)\right] + 
    % \partial_w^{(k)}\left[\partial_{w_b}\Delta(\bf z,\bf w)Y(\omega^a,w)\right]\\
    &= (k_a+1)\sum_{\bf j\geq \bf 0}\partial^{(\bf j)}_{\bf w}\Delta(\bf z,\bf w)\partial^{(\bf k+\bf e_a-\bf j)}_{\bf w}Y(\omega^b,\bf w) + \sum_{\bf j\geq \bf 0}\partial^{(\bf j)}_{\bf w}\partial_{w_b}\Delta(\bf z,\bf w) \partial^{(\bf k-\bf j)}_{\bf w}Y(\omega^a,\bf w)\\
    &= \Delta(\bf z,\bf w) \partial_{w_a}\partial^{(\bf k)}_{\bf w}Y(\omega^b,\bf w) 
    + \partial_{w_b}\Delta(\bf z,\bf w)\partial_{\bf w}^{(\bf k)}Y(\omega^b,\bf w)\\
    &\quad + (k_a+1)\sum_{i=1}^d\partial_{w_i}\Delta(\bf z,\bf w)\partial_{\bf w}^{(\bf k+\bf e_a-\bf e_i)}Y(\omega^a,\bf w) + \ldots
\end{aligned}
\end{equation*}
Reading off the commutator expansion and denoting $T_a = T^{(\bf e_a)}$ as usual, we have
\begin{align}
    \omega^{a}_{(\bf 0)}\left(T^{(\bf k)}\omega^b\right) &= T_a\left(T^{(\bf k)}\omega^b\right),\\
    \label{eq:commutatorsCWn}\omega^{a}_{(\bf e_c)}\left(T^{(\bf k)}\omega^b\right) &= \delta_{b,c}\left(T^{(\bf k)}\omega^a\right) + (k_a+1)T^{(\bf k+\bf e_a-\bf e_c)}\omega^b.
\end{align}
It is then straightforward to check by induction via the commutator formula and the $k$-product formula that
\begin{align}
    \omega^a_{(\bf 0)}\left(\Gamma_{\bf k^1}^{b^1}\ldots\Gamma_{\bf k^d}^{b^d}\mathds 1\right) &= T_a\left(\Gamma_{\bf k^1}^{b^1}\ldots\Gamma_{\bf k^d}^{b^d}\mathds 1\right),\\
    \omega^a_{(\bf e_a)}\left(\Gamma_{\bf k^1}^{b^1}\ldots\Gamma_{\bf k^d}^{b^d}\mathds 1\right) &= \sum_{i=1}^d(1+\delta_{a,b^i}+k^i_a)\Gamma_{\bf k^1}^{b^1}\ldots\Gamma_{\bf k^d}^{b^d}\mathds 1.
\end{align}
Thus $\omega^a_{(\bf 0)} = T_a$ for all $a$ and the collection of operators $(\omega^a_{(\bf e_a)})_a$ are simultaneously diagonalizable. This gives us an $\N^n$-spin grading $\deg$ on $V_{CW_n}$ via the assignment
\begin{equation}
    \deg(\Gamma^{a^1}_{\bf k^1}\ldots \Gamma^{a^d}_{\bf k^d}\mathds 1) = \sum_{i=1}^d \bf 1 + \bf e_{a^i} + \bf k^i.
\end{equation}
We write this spin grading as $V_{CW_n} = \bigoplus_{\bf s\geq \bf 0}V_{CW_n}^{(\bf s)}$.
As before we denote $\deg_i(v)$ for the $i$th component of $\deg(v)$.
In terms of commutator formulas, we can then write for every spin homogeneous $v\in V$ of $\omega^i_{(e_i)}$
\begin{align}
    [\omega^{i}_{(\bf e_i)},Y(v,\bf z)] = \left(z_i\partial_{z_i} + \deg_i(v)\right)Y(v,\bf z).
\end{align}
In particular, $\deg \omega^a = \bf 1+\bf e_{a}$.

The following proposition describes an important feature of $V_{CW_n}$ for $n>1$.
\begin{prop}
    For every $v\in V$ there is a finite-dimensional subspace $V_v\ni v$ such that $V_v$ is invariant under the action of $\omega^i_{(\bf e_j)}$ for all $i$ and $j$, that is $\omega^i_{(\bf e_j)}V_v\subset V_v$.
\end{prop}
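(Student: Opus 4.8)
The plan is to use the $\N^n$-spin grading $V_{CW_n}=\bigoplus_{\bf s\geq\bf 0}V_{CW_n}^{(\bf s)}$ constructed just above, together with the fact that the operators $\omega^i_{(\bf e_j)}$ shift it in a very controlled way. First I would record the spin degree of these operators: since $\omega^i=\Gamma^i_{\bf 0}\mathds 1$ has spin degree $\bf 1+\bf e_i$, Definition~\ref{defn:spin} gives that $\omega^i_{(\bf e_j)}$ has spin degree $(\bf 1+\bf e_i)-\bf e_j-\bf 1=\bf e_i-\bf e_j$, i.e.
\[\omega^i_{(\bf e_j)}\,V_{CW_n}^{(\bf t)}\subseteq V_{CW_n}^{(\bf t+\bf e_i-\bf e_j)}\]
for every $\bf t\geq\bf 0$, where the right-hand side is understood to be $0$ when $\bf t+\bf e_i-\bf e_j\not\geq\bf 0$ (this uses that $V_{CW_n}$ is $\N^n$-graded). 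The key observation is that the shift vector $\bf e_i-\bf e_j$ has coordinate sum $0$, so every operator $\omega^i_{(\bf e_j)}$ preserves the total degree $|\bf t|=\sum_{a=1}^n t_a$.

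By linearity it suffices to treat $v$ homogeneous for the spin grading: for a general $v=\sum_\alpha v_\alpha$ with $v_\alpha$ spin-homogeneous and the sum finite, $\sum_\alpha V_{v_\alpha}$ does the job. So assume $v\in V_{CW_n}^{(\bf s)}$ and set
\[V_v\eqdef\bigoplus_{\substack{\bf t\in\N^n\\ |\bf t|=|\bf s|}}V_{CW_n}^{(\bf t)}.\]
Then $v\in V_v$, and by the previous paragraph $\omega^i_{(\bf e_j)}V_v\subseteq V_v$ for all $i,j$, since these operators preserve both the nonnegativity of the spin multidegree and its total $|\cdot|$. Thus $V_v$ is an invariant subspace containing $v$, and it only remains to check that it is finite-dimensional.

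For finite-dimensionality I would argue in two steps. The index set $\{\bf t\in\N^n:|\bf t|=|\bf s|\}$ is finite, so it is enough to show each piece $V_{CW_n}^{(\bf t)}$ is finite-dimensional. Each spanning monomial $\Gamma^{a^1}_{\bf k^1}\cdots\Gamma^{a^d}_{\bf k^d}\mathds 1$ has spin degree $\sum_{i=1}^d(\bf 1+\bf e_{a^i}+\bf k^i)$, and each summand there is $\geq\bf 1$ coordinatewise; hence a monomial of spin degree $\bf t$ must have $d\leq\min_a t_a$, and once $d$ is bounded the constraints $\sum_i\bf k^i\leq\bf t$ and $a^i\in\{1,\dots,n\}$ leave only finitely many such monomials. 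Therefore $\dim_{\C}V_{CW_n}^{(\bf t)}<\infty$ and $V_v$ is finite-dimensional, completing the argument. I do not expect a genuine obstacle here; the only step with actual content is this finite-dimensionality of the spin-graded pieces, and it is also exactly where the hypothesis $n>1$ is used — for $n=1$ the coefficient ring is $\C[\mathbf c]$ with $\mathbf c$ of spin degree $\bf 0$, so the pieces $V_{CW_1}^{(s)}$ are infinite-dimensional over $\C$.
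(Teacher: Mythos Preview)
Your argument is correct. Both you and the paper exploit the same underlying fact---that $\omega^i_{(\bf e_j)}$ shifts by $\bf e_i-\bf e_j$, which has coordinate sum zero---but you package it differently. The paper works monomial-by-monomial via the explicit formula \eqref{eq:commutatorsCWn}: for a string $\Gamma_{\bf k^1}^{a^1}\cdots\Gamma_{\bf k^d}^{a^d}\mathds 1$ it takes $V_v$ to be the span of all strings of the same length $d$ with $|\boldsymbol\ell^i|=|\bf k^i|$ for each $i$, and checks invariance directly from the commutator. You instead invoke the spin grading abstractly and take the whole slice $\bigoplus_{|\bf t|=|\bf s|}V_{CW_n}^{(\bf t)}$. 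Your $V_v$ is typically larger (it mixes strings of different lengths once $|\bf s|$ is big enough), but the argument is cleaner and avoids the explicit action on monomials. The paper's choice buys a tighter, more hands-on invariant subspace; yours buys a shorter proof that makes the structural reason (preservation of total spin degree) transparent. Your side remark about $n=1$ is slightly off target: in $V_{CW_1}$ (the quotient by $(\mathbf c)$, which is what the proposition addresses) the spin pieces \emph{are} finite-dimensional over $\C$; it is only in $V_{\widetilde{CW}_1}$ over $\C[\mathbf c]$ that your specific finiteness step would fail, and there the paper's argument would face the same issue.
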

\begin{proof}
     By \eqref{eq:commutatorsCWn}, for $v = \Gamma_{\bf k}^a\mathds 1 = T^{(\bf k)}\omega^a$ we can set $V_{|\bf k|}$ to be the space of all elements of the form $T^{(\boldsymbol\ell)}\omega^b$ for $b\in [1,n]$ and $\boldsymbol\ell\geq \bf 0$ such that $|\boldsymbol\ell|=|\bf k|$. This is finite-dimensional (the dimension is $n\binom{n+|\bf k|-1}{n-1}$) and the action of $\omega^{i}_{(\bf e_j)}$ maps $V_{|\bf k|}$ into itself.
     More generally, for a string of the form $\Gamma_{\bf k^1}^{a^1}\ldots \Gamma_{\bf k^d}^{a^d}\mathds 1$ we can set the invariant subspace $V_{|\bf k^1|,\ldots,|\bf k^d|}$ to be the set of strings of the form $s = \Gamma_{\boldsymbol\ell^1}^{b^1}\ldots \Gamma_{\boldsymbol\ell^d}^{b^d}\mathds 1$ in which $|\boldsymbol\ell^i|=|\bf k^i|$ for all $i$ and $b^i\in [1,n]$. A larger subspace would be to take the set of strings such that $|\boldsymbol\ell^i| = \max_j|\bf k^j|$ for all $i$. Such larger subspaces also contain linear combinations of strings $s_1 + \ldots + s_m$.
\end{proof}
When $n=1$ then we only have the mode $\omega_{(1)}$, which is diagonalizable. For an element $v\in V_{CW_n}$ we can take the eigenvector decomposition of $v$, then form $V_v$ to be the subspace spanned by those eigenvectors, which is finite-dimensional. Therefore, this proposition is nontrivial only for $n>1$.

\subsubsection{Cohomological vertex operator algebras}\label{subsubsec:conformal}
Since we have described the CVA $V_{\widetilde{CW}_n}$, we remark that it has multiple properties similar to the definition of a VOA. 
This leads us to give the following definitions: 
\begin{defn}
    Let $R$ be a field of characteristic 0. We say a CVA $V$ over $R[\mathbf c]$ for $n=1$ and $R$ for $n>1$ with a spin grading is \textit{conformal} if it has a collection of homogeneous vectors $\{\omega^i\}_{i=1}^n$, called \textit{conformal vectors}, such that the following hold:
    \begin{enumerate}
        \item The fields $\Gamma^i(\bf z)$ satisfy the relations of $\widetilde{CW}_n$ with central charge $\mathbf c$ of cohomological degree $N$.
        \item $\omega^i_{(\bf 0)} = T_i$,
        \item $\omega^i_{(\bf e_i)} = \deg_i$.
    \end{enumerate}

    Furthermore, we say $V$ is a \textit{cohomological vertex operator algebra} (CVOA) if $V$ is conformal and the following extra conditions hold:
    \begin{enumerate}
        
        \item The components $V^{r,(\bf s)}$ of $V$ are allfinite-dimensional.
        \item Given $a\in V$, there is a finite-dimensional subspace $V_a\ni a$ such that $\omega^i_{(\bf e_j)}V_a\subset V_a$ for all $i,j$.
    \end{enumerate}
    A CVOA $V$ is of \textit{CFT-type} if the spin grading is an $\N^n$-grading and $V^{0,(\bf 0)} = \C\mathds 1$.
\end{defn}
We note that the central charge $\mathbf c$ is usually replaced with $c/2$ in the literature on the Virasoro algebra. Similar to 2D CFT, this central charge $\mathbf c$ measures the conformal anomaly; unlike VAs, the central charge $\mathbf c$ has cohomological degree $N$, so its behavior is quite different. In fact, the central charge $c$ for a VOA naturally appears when applying normal ordered products due to the non-associative, non-commutative nature of the normal ordered product for VAs. For CVAs, the normal ordered product is associative and commutative, and no extra central charge term appears in most formulas.

\subsection{An analog of the $\beta\gamma$-system}
One of the simplest cases of locality is as follows:
Let $\beta$ and $\gamma$ be mutually local homogeneous series such that for some $c\in R$ we have
\[[\gamma(\bf z),\beta(\bf w)] = \Delta(\bf z,\bf w)c,\qquad [\beta(\bf z),\beta(\bf w)] = [\gamma(\bf z),\gamma(\bf w)] = 0.\]
The right side having degree $N$ implies that $p(\gamma) + p(\beta) = N$. 
We define $V_{\beta\gamma}^c$ over $R$ to be the CVA freely generated by $\beta$ and $\gamma$, which we call the \textit{$\beta\gamma$-CVA} (after a choice of degree $p(\beta)$, the number of variables $n$, and the degree shifting $N$). For simplicity we set $c=1$, in which case we write $V_{\beta\gamma} \eqdef V^1_{\beta\gamma}$.

We define the series
\begin{equation*}
    B^i(\bf z) =\, \normord{\gamma(\bf z)\partial_{z_i}\beta(\bf z)},\qquad G^i(\bf z) =\, \normord{\beta(\bf z)\partial_{z_i}\gamma(\bf z)}.
\end{equation*}
Repeatedly applying the Dong-Li lemma gives us
\begin{align*}
    [B^i(\bf z), B^j(\bf w)]
    &=(-1)^{p(\beta)}\Big(\partial_{w_i}\Delta(\bf z,\bf w)B^j(\bf w) + \partial_{w_j}\Delta(\bf z,\bf w)B^i(\bf w)+ \Delta(\bf z,\bf w)\partial_{w_i}B^j(\bf w)\Big),
\end{align*}
Therefore $\Gamma^i(\bf z) \eqdef (-1)^{p(\beta)}B^i(\bf z)$ satisfies the commutator relations of $CW_n$, giving a conformal structure on $V_{\beta\gamma}$.

When $(-1)^{p(\beta)p(\gamma)} = 1$, many formulas simplify. Here are a few:
\begin{align*}
    [G^i(\bf z), G^j(\bf w)]
    &=(-1)^{1+p(\beta)}\Big(\partial_{w_i}\Delta(\bf z,\bf w)G^j(\bf w) + \partial_{w_j}\Delta(\bf z,\bf w)G^i(\bf w)+ \Delta(\bf z,\bf w)\partial_{w_i}G^j(\bf w)\Big),\\
    [B^i(\bf z),G^j(\bf w)] 
    % &= (-1)^{p(\beta)}\Big(\partial_{w_j}\partial_{w_i}\Delta(\bf z,\bf w)\normord{\beta(w)\gamma(w)} + \partial_{w_i}\Delta(\bf z,\bf w)G^j(w)\\ 
    % &\qquad\qquad\qquad + \partial_{w_j}\Delta(\bf z,\bf w)G^i(w) + \Delta(\bf z,\bf w)\partial_{w_i}G^j(w)\Big)\\
    &= (-1)^{p(\beta)}\partial_{w_j}\partial_{w_i}\Delta(\bf z,\bf w)\normord{\beta(\bf w)\gamma(\bf w)} - [G^i(\bf z),G^j(\bf w)],\\
    [G^i(\bf z),B^j(\bf w)]
    % &=(-1)^{1+p(\beta)}\Big(\partial_{w_j}\partial_{w_i}\Delta(\bf z,\bf w)\normord{\gamma(w)\beta(w)} + \partial_{w_i}\Delta(\bf z,\bf w)B^j(w) \\
    % &\qquad\qquad\qquad + \partial_{w_j}\Delta(\bf z,\bf w)B^i(w) + \Delta(\bf z,\bf w)\partial_{w_i}B^j(w)\Big)\\
    &=(-1)^{1+p(\beta)}\partial_{w_j}\partial_{w_i}\Delta(\bf z,\bf w)\normord{\gamma(\bf w)\beta(\bf w)} - [B^i(\bf z),B^j(\bf w)]\\
    [J(\bf z),J(\bf w)] &= 0.
\end{align*}
\begin{remark}
    Let $n=1$. Some of these commutator relations should appear unusual to those familiar with the usual $\beta\gamma$-VA.
    For the usual $\beta\gamma$-VA, the field $J(z) =\, \normord{\beta(z)\gamma(z)}$ has the commutator relation $[J(z),J(w)] = -\partial_w\delta(z,w)$, hence generates the Heisenberg VOA. That is not the case here, as the bracket is instead 0. Similarly $n=1$, the field $B(z) = \normord{\gamma(z)\partial_z\beta(z)}$ has a nonzero central charge for the $\beta\gamma$-VA, whereas it has a vanishing central charge for the $\beta\gamma$-CVA. This phenomenon is related to the normal ordered product being associative and graded commutative for CVAs, whereas it is neither associative nor commutative for VAs.
\end{remark}
From here on, we require that either $p(\beta)$ or $p(\gamma)$ be even so that $(-1)^{p(\beta)p(\gamma)} = 1$.
Given $\lambda\in R$, we define
\begin{equation}
    \Gamma_\lambda^i(\bf z) \eqdef (-1)^{p(\beta)}[(1-\lambda) B^i(\bf z) -\lambda G^i(\bf z)].
\end{equation}
Note that $\Gamma^i_\lambda(\bf z) = Y(\omega^{i,\lambda},z)$, where
\[\omega^{i,\lambda} = (1-\lambda)\gamma_{(-1)}\beta_{(-\bf 1-\bf e_i)}\mathds 1 - \lambda\beta_{(-\bf 1)}\gamma_{(-\bf 1-\bf e_i)}\mathds 1.\]
From all of the above identities, the series $\{\Gamma^{i}_\lambda(\bf z)\}_{i=1}^n$ again obey the $CW_n$ commutation relations. Bracketing with $\gamma$ and $\beta$, we get
\begin{align*}
    [\Gamma_\lambda^i(\bf z) , \gamma(\bf w)] 
    &= \left((1-\lambda)\partial_{w_i}\Delta(\bf z,\bf w)\gamma(\bf w) + \Delta(\bf z,\bf w)\partial_{w_i}\gamma(\bf w)\right)\\
    [\Gamma_\lambda^i(\bf z) , \beta(\bf w)] 
    &= \left(\lambda\partial_{w_i}\Delta(\bf z,\bf w)\beta(\bf w) + \Delta(\bf z,\bf w)\partial_{w_i}\beta(\bf w)\right).
\end{align*}

The above identities tell us that for $i\neq j$
\begin{align*}
    &\omega^{i,\lambda}_{(\bf 0)}\gamma_{(-\bf 1)}\mathds 1 = T^{(\bf e_i)}\gamma_{(-\bf 1)}\mathds 1
    &&\omega^{i,\lambda}_{(\bf e_i)}\gamma_{(-\bf 1)}\mathds 1 = (1-\lambda)\gamma_{(-\bf 1)}\mathds 1\\
    &\omega^{i,\lambda}_{(\bf 0)}\beta_{(-\bf 1)}\mathds 1 = T^{(\bf e_i)}\beta_{(-\bf 1)}\mathds 1
    &&\omega^{i,\lambda}_{(\bf e_i)}\beta_{(-\bf 1)}\mathds 1 = \lambda\beta_{(-\bf 1)}\mathds 1\\
    &\omega^{i,\lambda}_{(\bf 0)}\omega^{j,\lambda} = T^{(\bf e_i)}\omega^{j,\lambda}
    &&\omega^{i,\lambda}_{(\bf 0)}\omega^{i,\lambda} = T^{(\bf e_i)}\omega^{j,\lambda},\\
    &\omega^{i,\lambda}_{(\bf e_j)}\omega^{j,\lambda} = \omega^{i,\lambda}
    &&\omega^{i,\lambda}_{(\bf e_i)}\omega^{j,\lambda} = \omega^{i,\lambda}.
\end{align*}
By induction, we can see that $\omega^{i,\lambda}_{(\bf 0)} = T^{(\bf e_i)} = T_i$ for all $i$.
If the base ring is $R=\C$ and $\lambda = 0$ or 1, and $\{\omega^{i,\lambda}_{(\bf e_i)}\}_{i=1}^n$ induces an $\N^n$-grading on $V$ (more generally we can let $\lambda = 1/L$ to get a $(\tfrac{1}{L}\N)^n$-grading.

We may assign another grading to $V_{\beta\gamma}$, sometimes called the \textit{flavor grading}. Define
\[J(\bf z) \eqdef \normord{\beta(\bf z)\gamma(\bf z)}\]
We have the following relations obtained by the Dong-Li lemma:
\begin{align*}
    [J(\bf z),\beta(\bf w)] &= (-1)^{p(\beta)+p(\beta)p(\gamma)}\Delta(\bf z,\bf w)\beta(\bf w),\\
    [J(\bf z),\gamma(\bf w)] &= (-1)^{1+p(\beta)}\Delta(\bf z,\bf w)\gamma(\bf w),\\
    [J(\bf z),J(\bf w)] &= (-1)^{p(\beta)+p(\beta)p(\gamma)}\Delta(\bf z,\bf w)J(\bf w) + (-1)^{1+p(\beta)}\Delta(\bf z,\bf w)J(\bf w).
\end{align*}
Now assume that $p(\beta)$ is even, in which case the right-hand side of the third equation is 0.
We assign a grading on $V_{\beta\gamma}$ by declaring all modes of $\beta(\bf z)$ to have flavor charge $+1$ and all modes of $\gamma(\bf z)$ to have flavor charge $-1$. This grading respects normal ordered products, so the modes of $J(\bf z)$ have flavor charge 0.

\subsection{A Heisenberg analog}
Here we mention a generalization of the Heisenberg vertex algebra given in \cite{ravioli}. In particular, we define $V^c_H$ to be the CVA generated by the fields $\gamma(\bf z)$ and $\{\partial_{z_i}\beta(\bf z)\}_{i=1}^n$ from the $\beta\gamma$-CVA $V_{\beta\gamma}^c$. We have a natural inclusion $H^c\subset V_{\beta\gamma}^c$. These fields obey the bracket relations
\begin{equation}
    [\gamma(\bf z),b^i(\bf w)] = \partial_{w_i}\Delta(\bf z,\bf w)c,\qquad [\gamma(\bf z),\gamma(\bf w)]=[b^i(\bf z),b^j(\bf w)]=0.
\end{equation}
where $c\in R$. We also impose the additional relation that $\partial_{z_j}b^i(\bf z) = \partial_{z_i}b^j(\bf z)$ for all $i,j$.
We refer to $H^c$ as \textit{Heisenberg CVAs}. For convenience, we will again set $c=1$. When $N=n=1$, this corresponds to the Heisenberg raviolo vertex algebra $H$ from \cite{ravioli}.

Note that the above relation implies that $p(\gamma)+p(b^i) = N$ for all $i$, so $p(b) \eqdef p(b^i)$ is independent of $i$. Now, we define the series $\Gamma^i(\bf z) = (-1)^{p(b)}\normord{\gamma(\bf z)b^i(\bf z)}$ following the previous section. The series $\{\Gamma^i(\bf z)\}_{i=1}^n$ obey the commutation relations of $CW_n$, and we have by the Dong-Li lemma
\begin{align*}
    [B^i(\bf z),\gamma(\bf w)] &= \partial_{w_i}\Delta(\bf z,\bf w)\gamma(\bf w) + \Delta(\bf z,\bf w)\gamma(\bf w)\\
    [B^i(\bf z),b^j(\bf w)] &= \partial_{w_j}\Delta(\bf z,\bf w)b^i(\bf w) + \Delta(\bf z,\bf w)\partial_{w_j}b^i(\bf w).
\end{align*}
We may substitute $\partial_{w_j}b^i(\bf w) = \partial_{w_i}b^j(\bf w)$ into the second equation. The above relations reduce to $B^i_{(\bf 0)} = T_i$ for all $i$ and
\begin{equation*}
    B^i_{(\bf e_i)}\gamma_{(-\bf 1)}\mathds 1 = \gamma_{(-\bf 1)}\mathds 1,\qquad
    B^i_{(\bf e_j)}b^j_{(-\bf 1)}\mathds 1 = b^j_{(-\bf 1)}\mathds 1.
\end{equation*}
The other modes of $B^i$ act by 0 on $\gamma_{(-\bf 1)}\mathds 1$ and $b^j_{(-\bf 1)}\mathds 1$.

\subsection{Affine vertex algebras}\label{subsec:affine}
For this section, the underlying ring $R$ is a $\C$-algebra.
When $A$ is a commutative $\C$-algebra and $\frak g$ is a simple complex Lie algebra, it is a result due to Kassel \cite{Kassel} that the universal central extension of $\frak g(A)\eqdef \frak g\otimes_\C A$ is
\[\widehat{\frak g} = \frak g(A)\oplus \Omega_{A}/dA,\]
where $\Omega_A$ is the vector space of K\"ahler differentials over $A$.
This can be thought of as a generalization of the affine Lie algebra $\widehat{\frak g} = \frak g\otimes_\C \C[t,t^{-1}] \oplus \C\mathbf c$ since the extension $\Omega_{\C[t,t^{-1}]}/d\C[t,t^{-1}]$ is one-dimensional.
Similarly, we can ask that $\frak g$ is a graded Lie algebra as well, but we leave this case for future work.

For our purposes, we define
\[\Omega(\cal K^{\bf t}_{\mathrm{poly}}) \eqdef \bigoplus_{i=1}^n\cal K^{\bf t}_{\mathrm{poly}}dt_i,\]
subject to the relations
\begin{equation*}
    0 = d(\bf t^{\bf k}) = \sum_{i=1}^nk_i\bf t^{\bf k-\bf e_i}dt_i,\qquad 
    0 = d(\Omega^{\bf k}_{\bf t}) = -\sum_{i=1}^n(k_i+1)\Omega_{\bf t}^{\bf k+\bf e_i}dt_i
\end{equation*}
This second identity is a chain rule identity similar to that of $d(\bf t^{\bf k})$; it does not hold in $\Omega_{\cal K^{\bf t}_{\mathrm{poly}}}/\cal K^{\bf t}_{\mathrm{poly}}$, so we must impose this relation. We can also naturally induce a cohomological $\Z$-grading on $\Omega(\cal K^{\bf t}_{\mathrm{poly}})$ from $\cal K^{\bf t}_{\mathrm{poly}}$.
For $n=1$, we have $t^{k}dt = 0$ for all $k\in \N$ and $\Omega^k_t\,dt = 0$ unless $k=0$. $\Omega(\cal K^{\bf t}_{\mathrm{poly}})$ is then one-dimensional, recovering the degree 1 central extension as described in \cite{ravioli}.

Now let $\frak g$ be a complex Lie algebra with a symmetric, invariant, $\C$-bilinear form $h (\cdot,\cdot)$.
We define the Lie algebra
\[\widehat{\frak g} = \cal K^{\bf t}_{\mathrm{poly}}(\frak g) \oplus \Omega(\cal K^{\bf t}_{\mathrm{poly}})\]
with the bracket
\[[X\otimes \alpha, Y\otimes \beta] = [X,Y]\otimes \alpha\beta - h (X,Y) \alpha\,d\beta.\]
We let $\{\mu_a\}_{a}$ be an orthonormal basis with respect to $h$, and let $\{f^c_{ab}\}_{a,b,c}$ be the structure constants of $\frak g$ so that $[\mu_a,\mu_b] = f^c_{ab}\mu_c$ (here we are using Einstein summation).
From this, we define the fields
\[X(\bf z) = \sum_{\bf k\geq \bf 0}\bf z^{\bf k}(X\otimes \Omega^{\bf k}_{\bf t}) + \Omega_{\bf z}^{\bf k}(X\otimes \bf t^{\bf k}),\qquad K^\ell(\bf z) = \sum_{\bf k\geq \bf 0}\bf z^{\bf k} \Omega^{\bf k}_{\bf t}\, dt_\ell + \Omega_{\bf z}^{\bf k} \bf t^{\bf k}\, dt_\ell.\]
The series $X(\bf z)$ and $K^\ell(\bf z)$ are naturally homogeneous of degree $N$ for all $X\in \frak g$ and $\ell\in [1,n]$.
We can naturally see that
\begin{equation}
    [X(\bf z),Y(\bf w)] = \Delta(\bf z,\bf w)[X,Y](\bf w) + h (X,Y)\sum_{\ell=1}^n\partial_{w_\ell}\Delta(\bf z,\bf w)K^\ell(\bf w),
\end{equation}
with $K^\ell(\bf w)$ central for all $\ell$. When $n=1$, we have $K^1(w) = \Omega^0_t\,dt$, as expected.

Denote the graded symmetric algebra of $\Omega(\cal K^{\bf t}_{\mathrm{poly}})$ as $\C[\Omega]$. Now we may define the \textit{affine CVA} $V[\frak g,h]$ over $\C[\Omega]$ to be the vertex algebra strongly generated by the fields $\mu_a(\bf z)$ and $K^\ell(\bf z)_+$ (the power series part) for all $X\in \frak g$ and $\ell\in [1,n]$. This is because the forms $\bf t^{\bf k}dt_i$ and $\Omega^{\bf k}_{\bf t}dt_i$ are all central, so $\bf t^{\bf k}dt_i$ act as 0 on $V_{\frak g}$. The natural next step in the classical setting would be to quotient the central elements in the vertex algebra so that they act on the vacuum by scalars, but we cannot do this here since $K^\ell_{(-\bf 1-\bf k)}$ is of cohomological degree $N$ for all $\bf k\geq \bf 0$. Moreover, the na\"ive Segal-Sugawara construction does not make sense here because the normal ordered product

As remarked in \cite{ravioli}, it is unclear whether there is a natural conformal structure on $V_{\frak g}$, and we suspect there is not. This suggests that to obtain interesting conformal cohomological vertex algebras (particularly for $n>1$), one must construct them in other ways, such as by looking at other models in perturbative gauge theory.

\subsection{BRST reduction and W-algebras}
For this section, we describe a slight generalization of the BRST reduction from \cite[Section 4.5.2]{ravioli}, which leads naturally to the definition of W-algebras. For simplicity, we will set the base ring to be $R=\C$ again. We start with some definitions.
\begin{defn}
    A \textit{derivation} $D$ of (cohomological) degree $r$ on a CVA $V$ is a homogeneous degree $r$ linear map $D \colon V\to V$ such that
    $[D,Y(a,\bf z)] = Y(Da,\bf z)$.
    $D$ is said to be a \textit{differential} if $D^2=0$ and $r=1$. A \textit{differential-graded (dg) CVA} $(V,d)$ is a CVA $V$ with a differential $d$.
\end{defn}
\begin{defn}
    A \textit{cohomological vertex superalgebra (CVSA)} is a CVA with an additional $\Z/2$-grading $V^r = V^r_{+}\oplus V^r_{-}$ for all $r\in \Z$, where $\mathds 1$, $(T^{(\bf k)})_{k\geq 0}$, and $Y$ are all even with respect to this decomposition. Here $V^r_+$ is the even part and $V^r_-$ is the odd part.
    In all formulas involving the Koszul rule of signs, we replace $p(a)$ with $P(a) = p(a) + \delta_\pm$ for $a\in V^r_\pm$, where $\delta_+ = 0$ and $\delta_- = 1$. A homogeneous element $a\in V$ is \textit{bosonic} (resp. \textit{fermionic}) if $P(a)$ is even (resp. odd) (similarly so for homogeneous operators).
\end{defn}
Let $V$ be a CVSA, and let $W\in V$ be a nonzero bosonic element of cohomological degree $N+1$ with $N$ odd. Then $W_{(\bf 0)}$ is a degree 1 fermionic operator on $V$. Then $W_{(\bf 0)}^2 = \frac{1}{2}[W_{(\bf 0)},W_{(\bf 0)}] = 0$ if and only if $(W_{(\bf 0)}W)_{(\bf 0)} = 0$.
\begin{defn}
    Let $V$ be a CVSA with $N$ odd. An element $W\in V^2$ is called a \textit{superpotential} of $V$ if $(W_{(\bf 0)}W)_{(\bf 0)} = 0$.
\end{defn}

Recall the $\beta\gamma$-system $V_{\beta\gamma}$ such that $\beta$ has cohomological degree $r$ and spin $\lambda$; and $\gamma$ has cohomological degree $N-r$ and spin $1-\lambda$. We assign a super grading to $V_{\beta\gamma}$ by requiring that $\beta$ and $\gamma$ have super grading $(-1)^{r}$ and $(-1)^{N-r+1}$ respectively. Thus $\beta$ is always bosonic and $\gamma$ is always fermionic. We denote the resulting CVSA as $FC^{(r)}_\lambda$.
% \begin{defn}
%     An element $W\in V^{N+1}$ is called a superpotential if $W_{(0)}W_{(-1)}\mathds 1$ belongs to the image of 
% \end{defn}

Let $\frak g$ be a finite-dimensional Lie algebra, and let $\{f^a_{bc}\}_{a,b,c}$ be the structure constants of $\frak g$ with respect to a basis $\{\mu_a\}_{a=1}^n$. Let $V$ be a CVA with a \textit{Hamiltonian $\frak g$ symmetry at level 0}; that is, $V$ comes with a morphism $V[\frak g]\to V$. We may form the tensor product CVA $V\otimes (V_{\beta\gamma})^{\otimes \dim\frak g}$. We denote the generating fields of $(V_{\beta\gamma})^{\otimes \dim\frak g}$ as $\{\beta^a,\gamma_a\}_{a=1}^{\dim \frak g}$. For now we suppose the gradings of these fields are arbitrary, and we will provide restrictions on them.

Now we summarize the setup. Let $N\in \Z_{\geq 1}$ be odd. We consider the tensor CVSA $V\otimes (FC_{N-1}^{(1)})^{\otimes \dim\frak g}$ such that $\beta_a$ is bosonic of cohomological degree $N-1$ and spin $\bf 1\in \N^n$; and $\gamma^a$ is fermionic of cohomological degree $1$ and spin $0$. Then we have the superpotential given by
\[W_{\frak g} = \tfrac{1}{2}f^a_{bc}\normord{\beta_a\gamma^b\gamma^c}  - \normord{\gamma^a\mu_a}\]
because $[W_{\frak g}(z),W_{\frak g}(w)] = 0$.
Here we are using Einstein summation notation.
We denote $D_{W_{\frak g}} = W_{(\bf 0)}$. Note that $[W_{\frak g}(z),W_{\frak g}(w)] \neq 0$ if $N$ is even. One may check that
\[D_{W_{\frak g}}\gamma^a(\bf z) = \frac{1}{2}f^a_{bc}\normord{\gamma^b\gamma^c}\!(\bf z),\quad D_{W_{\frak g}}\beta_a(\bf w) = -\mu_a(\bf z).\]
\[D_{W_{\frak g}}O(\bf z) = \normord{c^a(\mu_{a,(-1)}O)(\bf z)},\]
where $O\in V$. Note that $W_{(0)}$ is a spin 0 operator as well. We define $V/\!\!/\frak g$ to be the dg CVSA $V\otimes (FC_{N-1}^{(1)})^{\otimes \dim\frak g}$ together with the superpotential $W_{\frak g}$.

Here are two ways we can generalize the above construction, again following the work in \cite{ravioli}. If $V$ already comes equipped with another superpotential $W$, then we can check that $W+W_{\frak g}$ is a superpotential if $[W(\bf z),\mu_a(\bf w)] = 0$ for all $a=1,\ldots,\dim \frak g$. Suppose we are given a collection of symmetric, $\frak g$-invariant bilinear forms $\{K_{ab}^i\}_{i=1}^{\dim \frak g}$ on $\frak g$. Then we can further define
\[W_{\frak g,K} = \tfrac{1}{2}f^a_{bc}\normord{\beta_a\gamma^b\gamma^c} + \tfrac{1}{2}K^i_{ab}\normord{\gamma^a\partial_{z_i}\gamma^b}.\]
Then we have that
\begin{align*}
    [W_{\frak g,K}(\bf z),W_{\frak g,K}(\bf w)] &=\Delta(\bf z,\bf w)\left(\tfrac{1}{4}K_{ad}f_{bc}^d\normord{\gamma^a\gamma^b\partial_{w_i}\gamma^c}(\bf w)\right)\\
    &= \Delta(\bf z,\bf w)\left(\tfrac{1}{12}K_{ad}f_{bc}^d\partial_{w_i}\normord{\gamma^a\gamma^b\gamma^c}(\bf w)\right).
\end{align*}
This implies that $[W_{\frak g,K,(\bf 0)},W_{\frak g,K,(\bf 0)}] = 0$, so we have the following result:
\begin{thm}
    Let $V$ be a CVA with a morphism $V[\frak g]\to V$, where $V[\frak g]$ is generated by the fields $\{\mu_a(z)\}_{a=1}^{\dim \frak g}$. Choose a collection of symmetric, $\frak g$-invariant bilinear forms on $\frak g$ and $W$ a $\frak g$-invariant superpotential. Then
    \[W_{tot} = \tfrac{1}{2}f^a_{bc}\normord{\beta_a\gamma^b\gamma^c} + \tfrac{1}{2}K^i_{ab}\normord{\gamma^c\partial_{z_i}} - \normord{\gamma^c\mu_a} + W\]
    is a superpotential on $V/\!\!/\frak g = V\otimes (FC^{(1)}_{N-1})^{\otimes\frak g}$. If $V$ has a collection of conformal elements $\{\Gamma^i(\bf z)\}_{i=1}^n$ such that $W$ is a primary, then $V/\!\!/\frak g$ has a collection of conformal elements $\{\Gamma^i_{tot}(\bf z)\}_{i=1}^n$ given by
    \[\Gamma^i_{tot} = \Gamma - \normord{\beta_a\partial_{z_i}\gamma^a}.\]
\end{thm}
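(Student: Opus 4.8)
The plan is to prove the two assertions separately, in each case reducing everything to bracket computations already recorded in this section together with the Dong--Li bracket formula (equation \ref{eq:normordbracket}). For the claim that $W_{tot}$ is a superpotential we must show $D_{W_{tot}}^2 = 0$ where $D_{W_{tot}} = (W_{tot})_{(\bf 0)}$. Since $W\mapsto W_{(\bf 0)}$ is $R$-linear I would group $W_{tot} = W_{\frak g,K} + W_{\frak g}^{(2)} + W$, with $W_{\frak g,K} = \tfrac12 f^a_{bc}\normord{\beta_a\gamma^b\gamma^c} + \tfrac12 K^i_{ab}\normord{\gamma^a\partial_{z_i}\gamma^b}$ and $W_{\frak g}^{(2)} = -\normord{\gamma^a\mu_a}$, so that $D_{W_{tot}}$ is a sum of three odd operators and
\[D_{W_{tot}}^2 = D_{W_{\frak g,K}}^2 + D_{W_{\frak g}^{(2)}}^2 + D_W^2 + [D_{W_{\frak g,K}},D_{W_{\frak g}^{(2)}}] + [D_{W_{\frak g,K}},D_W] + [D_{W_{\frak g}^{(2)}},D_W].\]
The three self-brackets vanish: $D_{W_{\frak g,K}}^2 = 0$ because, as computed above, $[W_{\frak g,K}(\bf z),W_{\frak g,K}(\bf w)]$ is $\Delta(\bf z,\bf w)$ times a total $\partial_{w_i}$-derivative and hence has vanishing $\bf 0$-mode (using $(T_iv)_{(\bf 0)} = 0$ together with the mode form of the commutator formula, $[P_{(\bf 0)},Q_{(\bf 0)}] = (P_{(\bf 0)}Q)_{(\bf 0)}$); $D_W^2 = 0$ is the hypothesis that $W$ is a superpotential of $V$; and $D_{W_{\frak g}^{(2)}}^2 = 0$ because, by equation \ref{eq:normordbracket}, its value is controlled by $f^c_{ab}\normord{\gamma^a\gamma^b\mu_c}$-type terms, which vanish since $\normord{\gamma^a\gamma^b}$ is graded-symmetric in $a,b$ (the $\beta\gamma$ factors are arranged so $\gamma$ has even total parity) while $f^c_{ab}$ is antisymmetric. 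The three cross-brackets also vanish: $[D_{W_{\frak g,K}},D_W] = 0$ since $W_{\frak g,K}$ is built purely from the ghost fields and $W$ from $V$, and these graded-commute; $[D_{W_{\frak g}^{(2)}},D_W] = 0$ since the only contraction available is $\mu_a$ against $W$, which vanishes by $\frak g$-invariance of $W$, i.e.\ $[W(\bf z),\mu_a(\bf w)] = 0$; and $[D_{W_{\frak g,K}},D_{W_{\frak g}^{(2)}}] = 0$ since its only nonzero contribution is the cubic-term-versus-$W_{\frak g}^{(2)}$ piece, which is exactly what the already-recorded identity $[W_{\frak g}(\bf z),W_{\frak g}(\bf w)] = 0$ forces to vanish (given that the cubic self-bracket is the Chevalley--Eilenberg relation $d^2 = 0$ for $\frak g$). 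Hence $D_{W_{tot}}^2 = 0$.

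For the conformal structure, the key observation is that $-\sum_a\normord{\beta_a\partial_{z_i}\gamma^a}$ is, for each $i$, exactly the conformal vector field $\Gamma^i_\lambda$ of the $\beta\gamma$-CVA specialized to $\lambda = 1$ (so $\beta$ has spin $\bf 1$ and $\gamma$ spin $\bf 0$), using $p(\beta) = N-1$ even to absorb the sign; summed over the $\dim\frak g$ ghost pairs this is the conformal vector field $\Gamma^i_{\mathrm{gh}}$ of $(FC^{(1)}_{N-1})^{\otimes\dim\frak g}$. Thus $\Gamma^i_{tot}$ is just the tensor-product conformal vector $\Gamma^i\otimes 1 + 1\otimes\Gamma^i_{\mathrm{gh}}$ on the CVA $V\otimes(FC^{(1)}_{N-1})^{\otimes\dim\frak g}$: the two summands act on disjoint tensor factors, so they graded-commute; each family separately obeys the $CW_n$ commutator relations (equation \ref{eq:CWncommutatorformula}) and has $\Gamma^i_{(\bf 0)} = T_i$, $\Gamma^i_{(\bf e_i)} = \deg_i$ on its own factor; so the sums $\Gamma^i_{tot}$ do too, since all cross-terms in the brackets drop out. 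What remains is compatibility with the differential $D_{W_{tot}}$: one shows $W_{tot}$ is primary of the appropriate weight with respect to each $\Gamma^i_{tot}$, so that $[\Gamma^i_{tot}(\bf z),W_{tot}(\bf w)]$ has no term worse than $\Delta(\bf z,\bf w)\partial_{w_i}W_{tot}(\bf w)$ plus a $\Delta(\bf z,\bf w)$-times-weight multiple of $W_{tot}(\bf w)$; this forces $D_{W_{tot}}$ to graded-commute with $\Gamma^i_{tot,(\bf 0)}$ and $\Gamma^i_{tot,(\bf e_i)}$, so the conformal structure descends to $D_{W_{tot}}$-cohomology. For the $W$-summand this is the hypothesis, and for the ghost summands $\normord{\beta_a\gamma^b\gamma^c}$, $\normord{\gamma^a\partial_{z_i}\gamma^b}$ and $\normord{\gamma^a\mu_a}$ one checks primality term by term against $\Gamma^i_{\mathrm{gh}}$ (and, for the $\mu_a$ factor, against $\Gamma^i$ via the conformal weight of the current $\mu_a(\bf z)$) using equation \ref{eq:normordbracket}.

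The step I expect to be the main obstacle is the sign and index bookkeeping in the superpotential part: one must keep careful track of the combined effect of the cohomological degree $N$ (odd), the $\Z/2$ super-grading of the $\beta\gamma$ factors, and the Einstein-summed structure constants $f^a_{bc}$ and invariant forms $K^i_{ab}$, so as to confirm that exactly the cross-terms claimed to vanish do vanish and that the surviving bracket $[W_{\frak g,K}(\bf z),W_{\frak g,K}(\bf w)]$ really is a total $\partial_{w_i}$-derivative up to the factor $\Delta(\bf z,\bf w)$. Given that, the conformal part is a routine consequence of the $\beta\gamma$ and $CW_n$ computations already recorded in this section.
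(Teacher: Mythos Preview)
Your overall strategy matches the paper's implicit argument (the paper states the theorem without a separate proof, relying on the computations recorded just before it), but there is a genuine error in the superpotential half that breaks your six-term decomposition.

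The problem is the claim that $D_{W_{\frak g}^{(2)}}^2 = 0$ on its own, which you justify by asserting that ``$\gamma$ has even total parity.'' It does not: in the BRST setup $\gamma^a$ is explicitly declared \emph{fermionic}, i.e.\ $P(\gamma^a)$ is odd, so $\normord{\gamma^a\gamma^b} = -\normord{\gamma^b\gamma^a}$ is \emph{antisymmetric} in $a,b$. Contracting an antisymmetric tensor with the antisymmetric structure constants $f^c_{ab}$ does \emph{not} give zero; the bracket $[W_{\frak g}^{(2)}(\bf z),W_{\frak g}^{(2)}(\bf w)]$ is genuinely proportional to $\Delta(\bf z,\bf w)\,f^c_{ab}\normord{\gamma^a\gamma^b\mu_c}(\bf w)$, which is nonzero in general. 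This error then propagates to your claim that $[D_{W_{\frak g,K}},D_{W_{\frak g}^{(2)}}] = 0$: from $[W_{\frak g},W_{\frak g}] = 0$ and the Jacobi identity (cubic self-bracket $=0$) you only obtain
\[
2\,[\text{cubic},W_{\frak g}^{(2)}] + [W_{\frak g}^{(2)},W_{\frak g}^{(2)}] = 0,
\]
which says these two pieces \emph{cancel}, not that either vanishes individually.

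The fix is simply to regroup: decompose $W_{tot} = W_{\frak g} + (\text{$K$-term}) + W$ with $W_{\frak g} = \tfrac12 f^a_{bc}\normord{\beta_a\gamma^b\gamma^c} - \normord{\gamma^a\mu_a}$ kept together, exactly as the paper does. Then $[W_{\frak g},W_{\frak g}] = 0$ is the recorded identity; $[K,K] = [K,W] = 0$ since the $K$-term involves only $\gamma$'s; $[W_{\frak g},W] = 0$ by $\frak g$-invariance of $W$ (the cubic part lives in the ghost factor); $[W_{\frak g},K\text{-term}]$ reduces to $[\text{cubic},K\text{-term}]$ (the $\normord{\gamma^a\mu_a}$ piece commutes with the $K$-term), and this is precisely the paper's computation of $[W_{\frak g,K},W_{\frak g,K}]$ as $\Delta(\bf z,\bf w)$ times a total $\partial_{w_i}$-derivative, whose $(\bf 0)$-mode vanishes; finally $(W_{(\bf 0)}W)_{(\bf 0)} = 0$ is the hypothesis. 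Your conformal argument is fine and matches the paper's $\lambda = 1$ specialization of the $\beta\gamma$ conformal vector.
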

In the above setting, we take cohomology with respect to the differential on $V/\!\!/\frak g$, giving the \textit{cohomological W-algebra} $\cal W(\frak g;V) = H^\bullet(V/\!\!/\frak g,D_{W_{\frak g}})$, a conformal CVSA. If $V = V[\frak g]$, we write $\cal W(\frak g) \eqdef \cal W(\frak g;V[\frak g])$. The above constructions require $K^\ell = 0$ for all $\ell$ in the affine CVA, so in analogy to VOAs the level must be $k=0$.

% We require that
% \begin{align*}
%     (-1)^{p(\beta)(p(\gamma)+N) + N+1} &= 1\\
%     (-1)^{Np(\gamma)+p(\gamma)+p(\beta) + N+1} &= 1.
% \end{align*}
% For simplicity, suppose now that $N\in \Z_{\geq 1}$. Note that the cohomological degrees of $\beta$ and $\gamma$ add up to $N$. For simplicity, suppose we are not working with a super grading for now. Then the above equations read as
% \begin{align*}
%     (-1)^{p(\beta) + N+1} &= 1\\
%     (-1)^{Np(\gamma)+1} &= 1.
% \end{align*}
% If $N$ is odd, then the first and second equations require that $p(\beta)$ and $p(\gamma)$ be even and odd respectively.

% \subsection{A W-algebra example}
% We anticipate that CVAs naturally admit some analog theory of W-algebras, though we are unsure of all of the details. For now, we give a natural generalization of one of the most well-known W-algebras.

% Let $\cal W_k$ be the CVA freely generated by the fields $J(z),T(z)$, and $G^\pm(z)$ satisfying the relations
% \begin{align*}
%     [J(z),J(w)] &= [G^{\pm}(z),G^{\pm}(w)] = 0,\\
%     [J(z),G^\pm(w)] &= \pm\Delta(\bf z,\bf w)G^\pm(w),\\
%     [T(z),T(w)] &= 2\partial_w\Delta(\bf z,\bf w)T(w) + \Delta(\bf z,\bf w)\partial_wT(w)\\
%     [T(z),G^\pm(w)] &= \frac{3}{2}\partial_w\Delta(\bf z,\bf w)G^\pm(w) + \Delta(\bf z,\bf w)\partial_wG^\pm(w)\\
%     [T(z),J(w)] &= \partial_w\Delta(\bf z,\bf w)J(w) + \Delta(\bf z,\bf w)\partial_wJ(w)\\
%     [G^+(z),G^-(w)] &= -\frac{3}{2}\Delta(\bf z,\bf w)J(w)
% \end{align*}

\bibliographystyle{amsalpha}
\bibliography{bibfile}

\end{document}